\author{Miljan Brako\v cevi\'c}
\title{Non-vanishing modulo $p$ of central critical Rankin--Selberg L-values with anticyclotomic twists}
\date{\today}
\newcommand{\X}[1]{\ensuremath{X(#1)}}
\newcommand{\Xw}[2]{\ensuremath{X(#1)_{/#2}}}
\newcommand{\xunrw}[2]{\ensuremath{\left(X(#1),\eta^{(p)}(#1)\right)_{/#2}}}
\newcommand{\ord}[1]{\ensuremath{\eta_p^{\mathrm{ord}}(#1)}}
\newcommand{\et}[1]{\ensuremath{\eta_p^{\mathrm{\acute{e}t}}(#1)}}
\newcommand{\xw}[2]{\ensuremath{\left(X(#1),\eta^{(p)}(#1),\et{#1}\times\ord{#1}\right)_{/#2}}}
\numberwithin{equation}{section}
\newtheorem{main}{Theorem}[section]
\newtheorem*{qexp}{Theorem}
\newtheorem{switch}{Remark}[section]
\newtheorem{factoring}{Lemma}[section]
\newtheorem{improv}[factoring]{Lemma}
\newtheorem*{recipe}{Recipe for a choice of $\lambda$}
\newtheorem{fact}{Fact}[section]
\newtheorem{density}[fact]{Theorem}
\newtheorem*{chai}{Theorem}
\newtheorem{linebundle}[fact]{Corollary}
\newtheorem{isogmaneuver}{Lemma}[section]
\newtheorem{assumption}[isogmaneuver]{Assumption}
\newtheorem{spcl}[isogmaneuver]{Proposition}
\begin{document}
\address{Department of Mathematics, UCLA, Los Angeles, CA 90095-1555, USA} 
\email{miljan@math.ucla.edu}
\subjclass[2010]{11F67}
\keywords{Non-vanishing, Rankin--Selberg L-values; CM points; Shimura curves}
\thanks{This research work is partially supported by Prof. Haruzo Hida's NSF grant DMS-0753991
through graduate student research fellowship and by UCLA Dissertation Year Fellowship.}

\begin{abstract}
We prove non-vanishing modulo $p$, for a prime $\ell \not = p$, of central critical Rankin--Selberg L-values with anticyclotomic twists of $\ell$-power conductor. The L-function is Rankin product of a cusp form and a theta series of arithmetic Hecke character of an imaginary quadratic field.  The paper is concerned with the case when the weight of Hecke character is greater than that of cusp form, so the L-value is essentially different in nature from the one in the landmark work of Vatsal and Cornut--Vatsal on the same theme.
\end{abstract}
\maketitle

\section{Introduction}

Studying non-vanishing of the central critical values of modular L-functions has had powerful and far-reaching applications to important problems of Iwasawa theory including various proofs of Main Conjectures. Generalizing the method of Sinnott (\cite{Si}) to the context of the theory of Shimura varieties, Hida studied non-vanishing of Hecke L-functions of CM fields in \cite{HidaDwork} and \cite{HidaLMS} and computed the $\mu$-invariant of Katz $p$-adic L-function in \cite{minv}. In his dissertation \cite{HS}, written under Hida's supervision in 2007, Hae-Sang Sun used this method to prove non-vanishing modulo (a rational prime) $p$ of L-values of the modular L-function associated to a level 1 Hecke newform $f$ twisted by a product $\lambda\chi$ of a fixed arithmetic Hecke character $\lambda$ of an imaginary quadratic field and finite order anticyclotomic $\chi$'s of $\ell$-power conductor, for a prime $\ell\not = p$. Here a fixed $\lambda$ is of $\infty$-type $(k,0)$, where $k$ is a weight of $f$.

Following the path paved by \cite{HidaDwork} and \cite{HS}, the purpose of this paper is to extend such non-vanishing result to the case of a Hecke newform of an arbitrary level $N\geq 1$ and nebentypus $\psi$, and arithmetic Hecke characters $\lambda$ of $\infty$-type $(k+m,-m)$ for arbitrary $m\geq0$. The moral of the latter is that, if we fix a Hecke character $\lambda_0$ of $\infty$-type $(k,0)$ satisfying certain criticality condition stated below, the anticyclotomic twists in our case are actually of the form $\chi_0\chi$ where $\chi_0$ is a fixed anticyclotomic Hecke character of $\infty$-type $(m,-m)$ and $\chi$'s range through the family of finite order anticyclotomic characters of $\ell$-power conductor -- in other words, our $\lambda:=\lambda_0\chi_0$.

The main ingredients in the proof are Zarisky density of CM points on modular Shimura varieties  studied in \cite{HidaDwork} and \cite{minv}, and the recent computation of an explicit Waldspurger formula in \cite{HidaNV}. Our modest aim is to understand the passage between these deep works of quite different mathematical flavor. In upcoming companion paper \cite{Br1} we compute the $\mu$-invariant of anticyclotomic $p$-adic L-function constructed in \cite{Br2}. 

To state the main theorem precisely, let $M$ be an imaginary quadratic field of discriminant $d(M)$, and set $d:=|d(M)|$. Fix two rational primes $p\not = \ell$  such that $p$ splits in $M$. Let $f$  be a normalized Hecke newform of level $\Gamma_0(N)$, $N\geq 1$, weight $k\geq1$, and nebentypus $\psi$ and let $\mathbf{f}$ be its corresponding adelic form on  $G(\mathbb{Q})\backslash G(\mathbb{A})$ with central character $\boldsymbol{\psi}$ (see Section \ref{sec:cmfadel} for definiton). All reasonable adelic lifts of $f$ are equal up to twists by a power of the everywhere unramified character $|\mathrm{det}(g)|_{\mathbb{A}}$ and $\mathbf{f}^u(g) := \mathbf{f}|\boldsymbol{\psi}(\mathrm{det}(g))|^{-1/2}$ is a unique one which generates a unitary automorphic representation  $\pi_\mathbf{f}$. We further take the base-change  $\hat{\pi}_{\mathbf{f}}$ to $\mathrm{Res}_{M/\mathbb{Q}}G$. Pick an arithmetic Hecke character $\lambda$ of $M^{\times} \backslash M^{\times}_{\mathbb{A}}$ of $\infty$-type $(k+m,-m)$ for arbitrary $m\geq 0$ and such that condition $\lambda|_{\mathbb{A}^\times} = \boldsymbol{\psi}^{-1}$ holds. Under this condition, the L-value $L(1/2,\hat{\pi}_\mathbf{f}\otimes \lambda^-)$, regarded as that of the Rankin--Selberg L-function associated to $\mathbf{f}$ and the theta series $\theta(\lambda^-)$ of $\lambda^-$, is critical in the sense of Deligne, and central with respect to the functional equation.

The choice of $\lambda$ is subtle and a detailed recipe is provided in Section \ref{sec:hewf}. Let $N=\prod_ll^{\nu(l)}$ be the prime factorization and denote by $N_{ns}=\prod_{l \, \text{non-split}}l^{\nu(l)}$ its ``non-split'' part. We choose $\lambda_0$ and $\chi_0$ as above requiring that $\chi_0$ has sufficiently deep conductor $c_{ns}:=\prod_{l\mid N_{ns}}l^{\tilde{\nu}(l)}$, for fixed $\tilde{\nu}(l)\geq \nu(l)$, at the non-split prime divisors of $N$. The role of the latter is optimal and two-fold. On one hand side, Hida's explicit Waldspurger formula requires depth of conductor of $\lambda$ at such primes, for otherwise the period integral vanishes. On the other side, sufficient ramification at these primes in the sense of Proposition 3.8 of \cite{JaLa} and Theorem 20.6 of \cite{Ja} contributes to +1 sign in the functional equation for the central critical L-value. Write $\lambda^-:=(\lambda \circ c)/|\lambda|$ for the unitary projection. We fix two embeddings $\iota_\infty :\bar{\mathbb{Q}} \hookrightarrow \mathbb{C}$ and  $\iota_p : \bar{\mathbb{Q}} \hookrightarrow \mathbb{C}_p$. Let $W$ denote the ring of Witt vectors with coefficients in an algebraic closure $\bar{\mathbb{F}}_p$ of the finite field of $p$ elements $\mathbb{F}_p$, regarded as a $p$-adically closed discrete valuation ring inside $p$-adic completion $\mathbb{C}_p$ of $\bar{\mathbb{Q}}_p$, and let $\mathfrak{P}$ be the prime of $W$ over $p$. Set $\mathcal{W}=\iota_p^{-1}(W)$ which is a strict henselization of $\mathbb{Z}_{(p)}=\mathbb{Q}\cap\mathbb{Z}_p$. Let $\mathrm{\Omega}_\infty\in\mathbb{C}^\times$ be the N\'eron period of a CM elliptic curve over $\mathcal{W}$.
We normalize the L-value as
\[
L^{\mathrm{alg}}(\frac{1}{2},\hat{\pi}_{\mathbf{f}}\otimes \lambda^-):= G\frac{\Gamma(k+m)\Gamma(m+1)}{\pi^{k+2m+1}}E(1/2)E'(m)\frac{L^{(N\ell d)}(\frac{1}{2},\hat{\pi}_{\mathbf{f}}\otimes \lambda^-)}{{\Omega_\infty^{2(k+2m)}}}\,,
\]
where we write $L^{(N\ell d)}(s,\hat{\pi}_{\mathbf{f}}\otimes \lambda^-)$ for the imprimitive L-function obtained by removing Euler factors at primes dividing $N\ell d$ from the primitive one, $E(1/2)$ and $E'(m)$ are modification Euler factors given by (\ref{euler1}) and (\ref{euler2}), respectively, and $G$ is essentially product of Gauss sums given by (\ref{gauss}). Let $\mathcal{S}(N,\ell)$ be a finite set of prime divisors of elements in
\[
 \{N, \ell-1\} \cup \{l-1\,:\text{ prime }l\mid N \text{ is ramified in }M\} \cup \{l-1,l+1\,:\text{ prime }l\mid N \text{ is inert in } M\}\,.
\]
Then our result states:
\begin{main}\label{main}
Let $p\not = \ell$ be two fixed rational primes such that $p$ splits in $M$. Let $f$ be a normalized Hecke newform of level $\Gamma_0(N)$, $N\geq 1$, weight $k\geq1$, and nebentypus $\psi$. Suppose $\ell \nmid N $ and that $p>2$ is outside the above finite set of primes $\mathcal{S}(N,\ell)$. Fix a Hecke character $\lambda$ as above, that is, of $\infty$-type $(k+m,-m)$ for arbitrary $m\geq 0$, whose conductor is supported at primes dividing $N\ell$ and is equal to a fixed preassigned $c_{ns}$ as above at ones that are non-split. Then
\[L^{\mathrm{alg}}(\frac{1}{2},\hat{\pi}_{\mathbf{f}}\otimes (\lambda\chi)^-)\not\equiv 0 \; (\mathrm{mod}\, \mathfrak{P})\]
for all but finitely many anticyclotomic characters $\chi$ of $\ell$-power conductor.
\end{main}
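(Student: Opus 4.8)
The plan is to run the Sinnott--Hida method: realize the normalized central value, modulo $\mathfrak{P}$, as (essentially the norm of) a toric period of a $p$-adic modular form evaluated along CM points of growing $\ell$-power conductor, and then deduce non-vanishing from the Zariski density of such CM points in the Igusa tower, in the spirit of \cite{HidaDwork} and \cite{HS}.

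\textbf{Step 1: from the $L$-value to a toric period.} I would feed the pair consisting of $\mathbf{f}^u$ and the theta series $\theta((\lambda\chi)^-)$ into the explicit Waldspurger formula of \cite{HidaNV}. The recipe for $\lambda$ from Section~\ref{sec:hewf}, together with the local computations of \cite{JaLa} and \cite{Ja} at the non-split prime divisors of $N$, forces the root number of $L(s,\hat{\pi}_{\mathbf{f}}\otimes(\lambda\chi)^-)$ to be $+1$, so that the central value is not forced to vanish; the deep conductor $c_{ns}$ at the non-split primes guarantees in addition that the corresponding local toric integrals in Hida's formula do not vanish. The formula then identifies
\[
L^{\mathrm{alg}}\!\left(\tfrac{1}{2},\hat{\pi}_{\mathbf{f}}\otimes(\lambda\chi)^-\right)\;\equiv\;u\cdot P_\chi\cdot P_\chi^{c}\pmod{\mathfrak{P}},
\qquad
P_\chi=\sum_{[a]}(\lambda\chi)^-(a)\,\big(\theta^m f\big)(x_a),
\]
with $u\in W^\times$, $P_\chi^{c}$ the complex-conjugate period, and the $x_a$ running over CM points of conductor $c_{ns}\ell^{n}$ ($n$ the $\ell$-exponent of $\mathrm{cond}(\chi)$) on the relevant Shimura curve $X$. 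The higher weight enters through the factor $\theta^m f$: since $\theta((\lambda\chi)^-)$ has weight $k+2m+1>k=\mathrm{wt}(f)$, the vector-valued toric integral pairs $\mathbf{f}^u$ against the highest weight vector for $M_\infty^\times$, which on the archimedean side is the value at $x_a$ of the $m$-fold Maass--Shimura derivative $\delta_k^m f$; as $p$ splits in $M$ the CM points are ordinary, so this number, divided by $\Omega_\infty^{k+2m}$, $p$-adically interpolates to the value at the corresponding point of the Igusa tower of the $p$-adic modular form $\theta^m f$, the $m$-th Atkin--Serre iterate of $f$. The hypotheses $p>2$ and $p\notin\mathcal{S}(N,\ell)$ serve precisely to make $E(1/2)$, $E'(m)$, the Gauss-sum factor $G$, the local toric integrals at primes dividing $N$, the local structures at $\ell$, and the ratio of $\Omega_\infty$ to the $p$-adic CM period all $p$-adically non-degenerate, so that $u$ is a unit. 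By this congruence, the theorem reduces to showing that $P_\chi$ --- and, symmetrically, $P_\chi^{c}$ --- is a $\mathfrak{P}$-adic unit for all but finitely many anticyclotomic $\chi$ of $\ell$-power conductor.

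\textbf{Step 2: non-vanishing via density of CM points.} The periods $P_\chi$ are the specializations $\int_{\Gamma^-}\chi\,d\bar{\mu}$ of a single element $\bar{\mu}$ of the Iwasawa algebra $\bar{\mathbb{F}}_p[[\Gamma^-]]$, where $\Gamma^-$ is the anticyclotomic Galois group and the coefficient of $\bar{\mu}$ at the class of $a$ is $(\theta^m f)(x_a)\bmod\mathfrak{P}$; the distribution relations making $\bar{\mu}$ well defined hold because $\ell\nmid N$, so that $\pi_{\mathbf{f},\ell}$ is an unramified principal series and its Satake parameters at $\ell$ (hence the relevant local factors) are $p$-adic units. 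Since $\Gamma^-\cong\mathbb{Z}_\ell$ up to a finite group, Weierstrass preparation shows that a nonzero element of $\bar{\mathbb{F}}_p[[\Gamma^-]]$ has only finitely many characters among its zeros; hence it is enough to prove $\bar{\mu}\ne0$, i.e. that $(\theta^m f)(x_a)\not\equiv0\pmod{\mathfrak{P}}$ for at least one CM point $x_a$. Now $\theta^m f\not\equiv0\pmod{\mathfrak{P}}$ as a $p$-adic modular form on the Igusa tower, since its $q$-expansion is $\sum_n n^m a_n(f)q^n$ with $a_1(f)=1$. On the other hand, the CM points $\{x_a\}$ of conductor $c_{ns}\ell^n$ for $n\ge0$ form an orbit of the $\ell$-adic torus, which --- because $\ell\ne p$ --- acts through prime-to-$p$ Hecke correspondences, and by the density theorem for CM points in the Igusa tower (Chai's rigidity results, in the form used by Hida in \cite{HidaDwork} and \cite{minv}) this orbit is Zariski dense. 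A nonzero mod-$p$ modular form cannot vanish on a Zariski-dense set, so $(\theta^m f)(x_a)\not\equiv0\pmod{\mathfrak{P}}$ for some $a$; thus $\bar{\mu}\ne0$, which gives $P_\chi\not\equiv0\pmod{\mathfrak{P}}$ for all but finitely many $\chi$, and the identical argument applied to the conjugate period $P_\chi^{c}$ completes the proof.

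\textbf{The main obstacle.} I expect Step~1 to be the crux. Making the displayed congruence precise requires (i) running Hida's explicit Waldspurger formula carefully enough to exhibit the Maass--Shimura operator $\delta_k^m$ and then to identify its restriction to the ordinary CM locus with the $p$-adic Atkin--Serre operator $\theta^m$, matching the period $\Omega_\infty^{2(k+2m)}$ of the normalization with the $p$-adic CM period; and (ii) a careful accounting of $p$-integrality and $p$-unit-ness at every auxiliary factor, which is exactly what dictates the exclusion of the primes in $\mathcal{S}(N,\ell)$ and the hypothesis $p>2$. The geometric input of Step~2 is deep but is imported essentially intact from \cite{HidaDwork} and \cite{minv}; the new content lies in gluing that geometry to the analytic formula of \cite{HidaNV}, which is precisely the ``passage'' the paper sets out to understand.
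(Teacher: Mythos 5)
Your Step 1 is essentially the paper's setup: Hida's explicit Waldspurger formula (\ref{wald}) together with the Katz--Shimura rationality (\ref{K-S}) is exactly how the normalized central value is identified, modulo $\mathfrak{P}$, with the square of the CM-point sum (\ref{torusint}) of $d^mf$ twisted by $\lambda\chi$. The gap is in Step 2, and it is fatal. Your finiteness claim rests on Weierstrass preparation in $\bar{\mathbb{F}}_p[[\Gamma^-]]$ with $\Gamma^-\cong\mathbb{Z}_\ell$; but here $\ell\neq p$, so this Iwasawa algebra is $\varprojlim_n\bar{\mathbb{F}}_p[\mathbb{Z}/\ell^n\mathbb{Z}]$, a projective limit of semisimple (split) group algebras --- not a local power series ring, not even a domain --- and Weierstrass preparation fails. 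A nonzero element can vanish against infinitely many characters: for instance the compatible system of idempotents $\ell^{-n}\sum_{g\in\mathbb{Z}/\ell^n\mathbb{Z}}[g]$ is nonzero yet has zero integral against every nontrivial character of $\ell$-power conductor. Consequently your reduction ``it is enough to prove $\bar{\mu}\neq 0$, i.e.\ that $(d^mf)(x_a)\not\equiv 0\;(\mathrm{mod}\,\mathfrak{P})$ at a single CM point'' does not prove the theorem: the entire difficulty of the Sinnott--Hida method is precisely to exclude the possibility that the character sums vanish for infinitely many $\chi$ even though individual CM values are units. (Two secondary problems with the same step: plain evaluation of $d^mf$ at CM points does not satisfy the distribution relation needed to define $\bar{\mu}$ --- one would have to $\ell$-stabilize, and the paper deliberately avoids constructing any anticyclotomic measure so as not to impose conditions at $\ell$; and your Step 2 never uses the hypotheses $\ell\nmid N$, $p>2$, $p\notin\mathcal{S}(N,\ell)$ in the way the theorem actually requires, since they are not merely about fudge factors being units.)

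What the paper does instead, and what your proposal is missing, is the following mechanism. Assuming $L^{\mathrm{alg}}\equiv 0\;(\mathrm{mod}\,\mathfrak{P})$ for infinitely many $\chi_j$ with a fixed branch character $\nu$ on $\Delta$, one applies Frobenius conjugation via Shimura's reciprocity law and the field trace from $\mathbb{F}_p[f,\lambda](\chi_j)$ to $\mathbb{F}_p[f,\lambda]$ to convert each vanishing sum into vanishing of partial sums over the cosets $y\chi_j^{-1}(\mu_{\ell^s})$ as in (\ref{medgal})--(\ref{newgalois}); one then decomposes $\mathrm{Cl}^-_\infty$ using complete sets $\mathscr{R},\mathscr{Q}$ of representatives built from primes split in $M$ (chosen by \v Cebotarev), and uses the Hecke relation (\ref{proj}) (unipotent translates $\bigl(\begin{smallmatrix} 1 & u/\ell^s \\ 0 & 1\end{smallmatrix}\bigr)$) to rewrite each partial sum as the value at points of a Zariski-dense subset of $V^{\mathscr{Q}}$ of explicit $p$-adic forms $g_{\mathfrak{q}}$, finite linear combinations of twists $(d^mf)|[rq]|\bigl(\begin{smallmatrix} 1 & u/\ell^s \\ 0 & 1\end{smallmatrix}\bigr)$. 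The density Theorem \ref{density} together with Corollary \ref{linebundle} (linear independence of translated sections along $\Xi^{\mathscr{Q}}$ --- density of the single orbit in $V$ is not enough) forces $g_{\mathfrak{q}}=0$ identically; the contradiction is then obtained on $\mathbf{q}$-expansions via Lemma \ref{isogmaneuver}, by producing a prime $l\equiv v(r_0q_0)^{-1}\;(\mathrm{mod}\,\ell^s)$ with $a(l,f)\not\equiv 0\;(\mathrm{mod}\,p)$. That last step is where the mod-$\mathcal{P}$ Galois representation attached to $f$ and the \v Cebotarev density theorem enter, and it is exactly where $\ell\nmid N$ (linear disjointness of $\bar{\mathbb{Q}}^{\mathrm{Ker}\bar{\rho}}$ and $\mathbb{Q}(\mu_{\ell^s})$) and $p>2$ (so that $a(l,f)=\mathrm{Tr}\,\bar{\rho}(\mathrm{Fr}_l)=2\neq 0$ in $\bar{\mathbb{F}}_p$) are used. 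None of this machinery --- the trace trick over Galois conjugates, the split-prime decomposition, the linear-independence corollary, the $\mathbf{q}$-expansion computation, and the Galois-representation/\v Cebotarev endgame --- appears in your proposal, and without it the passage from ``some CM value is a unit'' to ``all but finitely many twisted sums are units'' is unjustified.
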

The above finite set of primes $p$ is excluded from consists of the primes which divide the fudge factors in Hida's computation of an explicit Waldspurger type of formula we use. The proof is actually valid under condition milder than $\ell\nmid N$ (see Assumption \ref{assumption}). We also prove validity of the Theorem \ref{main} when $\ell\|N$ and the local component $\pi_{\mathbf{f},\ell}$ is a special representation (see Proposition \ref{spcl}), but we hope to treat the general case when $\ell\mid N$ in a future paper.

The L-value in the Theorem \ref{main} is actually $L(\frac{1}{2},\hat{\pi}_{\mathbf{f}}\otimes (\lambda  \chi  |\cdot|_{M_\mathbb{A}}^m)^-)$, however the norm character has trivial unitary projection. The weight $k+2m+1$ of theta series $\theta(\lambda  \chi  |\cdot|_{M_\mathbb{A}}^m)$ associated to Hecke character $\lambda  \chi |\cdot|_{M_\mathbb{A}}^m$ being strictly greater than the weight $k$ of the cusp form $f$, incites essentially different arithmetic nature from the L-value studied in the landmark work of Vatsal started in deep and beautiful papers \cite{Va02} and \cite{Va03}, and continued in joint work with Cornut \cite{CoVa}, where the comparison of the weights is opposite. Thus, the period for the L-value there depends on $f$ only, namely, it is Hida's canonical period from \cite{H88ajm}, as opposed to a power of a CM period attached to $M$. The canonical Selmer group and the Main Conjecture associated to our L-values are distinct. Needless to say, as a direct application of Hida's method from \cite{HidaDwork}, our proof fits in Vatsal's philosophy about ergodic rigidity principle underlying non-vanishing of L-values, as explained in his ICM 2006 report \cite{VaICM}.

\subsection*{Acknowledgment} I would like to express my gratitude to Prof. Haruzo Hida for his generous insight, support and guidance. After his oral explanation of the circle of ideas, I merely had to work out details. I am also deeply indebted to Hae-Sang Sun for paving the path to this work by his dissertation \cite{HS}, in particular, using the \v Cebotarev density theorem and the Galois representation in the proof of the main theorem I learned from there.
\tableofcontents

\section{Modular forms}

\subsection{Classical modular forms and adelic ones}\label{sec:cmfadel}Let $G$ be algebraic group  $\mathrm{GL}(2)_{/\mathbb{Q}}$. 
We denote by $S_k(\Gamma_0(N),\psi)$ the space of holomorphic cusp forms $f$ of level $\Gamma_0(N)$ and nebentypus $\psi$ with $f(\gamma(z))=\psi(\gamma)f(z)j(\gamma,z)$ for $\gamma\in \Gamma_0(N)$, where $j(\bigl(\begin{smallmatrix} a & b \\ c & d \end{smallmatrix} \bigr),z)=cz+d$ for $z\in\mathfrak{H}=\{z|\mathrm{Im}(z)>0\}$ and $\bigl(\begin{smallmatrix} a & b \\ c & d \end{smallmatrix} \bigr)\in G(\mathbb{R})$. Here a Dirichlet character $\psi$ is regarded as a character of $\widehat{\Gamma}_0(N)$ via $\bigl(\begin{smallmatrix} a & b \\ c & d \end{smallmatrix} \bigr)\mapsto \psi(d)$. By virtue of strong approximation theorem $G(\mathbb{A})=G(\mathbb{Q})\widehat{\Gamma}_0(N)\mathrm{GL}_2^+(\mathbb{R})$ ($\mathrm{GL}_2^+(\mathbb{R})=\{ g \in G(\mathbb{R})|\mathrm{det}(g)>0\}$) allowing us to lift $f$ to $\mathbf{f}:G(\mathbb{Q})\backslash G(\mathbb{A})\to \mathbb{C}$ by $\mathbf{f}(\alpha u g_\infty)=f(g_\infty(\mathbf{i}))\psi(u)j(g_\infty,\mathbf{i})^{-k}$ for $\alpha\in G(\mathbb{Q})$, $u\in \widehat{\Gamma}_0(N)$ and $g_\infty \in \mathrm{GL}_2^+(\mathbb{R})$. Note that $\mathbf{f}(\alpha g u)=\psi(u)\mathbf{f}(g)$ for $\alpha\in G(\mathbb{Q})$ and $u\in \widehat{\Gamma}_0(N)$, and that $\mathbf{f}$ is so-called arithmetic lift of $f$ -- more widely used automorphic lift involves the determinant factor which we omitted. Denoting by $\mathcal{S}_k(\widehat{\Gamma}_0(N),\psi)$ the space of adelic cusp forms $\mathbf{f}$ obtained from $f\in S_k(\Gamma_0(N),\psi)$ in this way, we have $S_k(\Gamma_0(N),\psi)\cong\mathcal{S}_k(\widehat{\Gamma}_0(N),\psi)$ via $f\leftrightarrow\mathbf{f}$. Note that the center $Z(\mathbb{A})$ acts on $\mathcal{S}_k(\Gamma_0(N),\psi)$ via $\mathbf{f}|\zeta(g)=\mathbf{f}(\zeta g)$ and that $\mathbf{f}|\zeta_\infty=\zeta_\infty^{-k}\mathbf{f}$ for $\zeta\in Z(\mathbb{A})$. Thus $\mathcal{S}_k(\Gamma_0(N),\psi)$ decomposes into the direct sum of eigenspaces for this action and on each eigenspace $Z(\mathbb{A})$ acts by a Hecke character whose restriction to $\widehat{\Gamma}_0(N)\cap Z(\mathbb{A})$ is $\psi$ and which sends $\zeta_\infty$ to $\zeta_\infty^{-k}$. If we lift $\psi$ to $\mathbb{A}^\times$ in standard way and set $\boldsymbol{\psi}:=\psi |\cdot|_{\mathbb{A}}^{-k}$, let $\mathcal{S}_k(N,\boldsymbol{\psi})$ denote the $\boldsymbol{\psi}$-eigenspace. Then $S_k(\Gamma_0(N),\psi)\cong \mathcal{S}_k(N,\boldsymbol{\psi})$ via $f\leftrightarrow\mathbf{f}$.

\subsection{Algebro-geometric modular forms} Let $N$ be a positive integer, $B$ a fixed base $\mathbb{Z}[\frac{1}{N}]$-algebra and $S$ a $B$-scheme. An elliptic curve $E$ over $S$ is a proper smooth morphism $\pi : E\to S$ whose geometric fibers are connected curves of genus 1, together with a section $\textbf{0}:S\to E$. By level $\Gamma_1(N)$-structure, we refer to an embedding of finite flat group schemes $i_N:\mu_N\hookrightarrow E[N]$, where $E[N]$ is a scheme-theoretic kernel of multiplication by $N$ map -- it is a finite flat abelian group scheme over $S$ of rank $N^2$. 

The modular curve $\mathfrak{M}(\Gamma_1(N))$ of level $\Gamma_1(N)$ classifies pairs $(E,i_N)_{/S}$, for a $B$-scheme $S$. In other words, $\mathfrak{M}(\Gamma_1(N))$ is a coarse moduli scheme of the following functor from the category of $B$-schemes to the category \textit{SETS}
\[ \mathcal{P}(S)=[(E,i_N)_{/S}]_{/\cong}\]
where $[\quad]_{/\cong}$ denotes the set of isomorphism classes of the objects inside the brackets. When $N>3$ it is a fine moduli scheme of this functor.

Let $\omega$ denote a basis of $\pi_*(\Omega_{E/S})$, that is a nowhere vanishing section of $\Omega_{E/S}$. Fix a positive integer $k$ and a continuous character $\psi : (\mathbb{Z}/N\mathbb{Z})^\times\rightarrow B^\times$. A $B$-integral holomorphic modular form of weight $k$, level $\Gamma_0(N)$ and nebentypus $\psi$ is a function of isomorphism classes of $(E,i_N,\omega)_{/A}$, defined over $B$-algebra $A$, satisfying the following conditions:
\begin{itemize}
 \item[(G0)] $f((E,i_N,\omega)_{/A})\in A$ if $(E,i_N,\omega)$ is defined over $A$
 \item[(G1)] If $\varrho:A\rightarrow A'$ is a morphism of $B$-algebras then $f((E,i_N,\omega)_{/A}\otimes_{B} A')=\varrho(f((E,i_N,\omega)_{/A}))$;
 \item[(G2)] $f((E,i_N,a\omega)_{/A}))=a^{-k}f((E,i_N,\omega)_{/A})$ for $a\in A^\times=\mathbb{G}_m(A)$;
 \item[(G3)] $f((E,i_N\circ b,\omega)_{/A}))=\psi(b)f((E,i_N,\omega)_{/A})$ for $b\in (\mathbb{Z}/N\mathbb{Z})^\times$, where $b$ acts on $i_N$ by the canonical action of $\mathbb{Z}/N\mathbb{Z}$ on the finite flat group scheme $\mu_N$;
 \item[(G4)] For the Tate curve $Tate(q^N)$ over $B\otimes_{\mathbb{Z}}\mathbb{Z}((q))$ viewed as algebraization of the formal quotient        
   $\widehat{\mathbb{G}}_m/q^{N\mathbb{Z}}$, its canonical differential $\omega_{Tate}^{can}$ deduced from $\frac{\mathrm{d}t}{t}$  on $\widehat{\mathbb{G}}_m$, and all   
   $\Gamma_1(N)$-level structures $i_{Tate,N}$ coming from canonical images of points $\zeta_N^i q^j$ from  $\widehat{\mathbb{G}}_m$ ($0\leq i,j\leq N-1$), we have
   \[ f((Tate(q^N),i_{Tate,N},\omega_{Tate}^{can}))\in B\otimes_{\mathbb{Z}}\mathbb{Z}[[q]]\; .\] 
\end{itemize}
The space of $B$-integral holomorphic modular forms of weight $k$, level $\Gamma_0(N)$ and nebentypus $\psi$ is a $B$-module of finite type and we denote it by $G_k(N,\psi;B)$.

As it is well known, over $\mathbb{C}$, the category of test objects $(E,i_N,\omega)$ is equivalent to the category of the pairs $(L,i)$, where $L$ is a $\mathbb{Z}$-lattice in $\mathbb{C}$, and $i:\mathbb{Z}/NZ \hookrightarrow \frac{1}{N}L/L$. The differential $\omega$ can be recovered by pulling back $\mathrm{d}u$ to $E(\mathbb{C})=\mathbb{C}/L$, for standard variable $u$ on $\mathbb{C}$. Conversely,
\[L_E=\left\{ \int_{\gamma}\omega \in \mathbb{C} \,\big| \, \gamma \in H_1(E(\mathbb{C}),\mathbb{Z}) \right\} \]
is a lattice in $\mathbb{C}$. Then an algebro-geometric modular form $f$ integral over $\mathbb{C}$ gives rise to a classical modular form, whence also to adelic one, via
\[f(z):=f\left(L_z,i, 2\pi \mathbf{i} \mathrm{d}u \right) \,,\]
where $L_z=\mathbb{Z}+\mathbb{Z}z$ and $i:1 \mapsto 1/N$.

\subsection{$p$-adic modular forms}
\label{p-adicmf}
Fix a prime number $p$ that does not divide $N$. Let $B$ be  a $p$-adic algebra, that is, an algebra complete and separated in its $p$-adic topology. For an elliptic curve $E_{/S}$ we consider a Barsotti-Tate group $E[p^\infty]=\underrightarrow{\lim}_nE[p^n]$ for finite flat group schemes $E[p^n]$ equipped with closed immersions $E[p^n]\hookrightarrow E[p^m]$ for $m>n$ and the multiplication $[p^{m-n}]:E[p^m]\to E[p^n]$ which is an epimorphism in the category of finite flat group schemes. We consider a morphism of ind-group schemes $i_p:\mu_{p^\infty}\hookrightarrow E[p^\infty]$ which induces isomorphism of formal groups $\hat{i}_p:\widehat{\mathbb{G}}_m\cong\widehat{E}$ called trivialization of $E$. Here $\widehat{E}$ is the formal completion of $E$ along its zero-section.

A holomorphic $p$-adic modular form over $B$ is a function of isomorphism classes of $(E,i_N,i_p)_{/A}$, defined over $p$-adic $B$-algebra $A$, satisfying the following conditions:
\begin{itemize}
	\item[(P0)] $f((E,i_N,i_p)_{/A})\in A$ if $(E,i_N,i_p)$ is defined over $A$;
	\item[(P1)] If $\varrho:A\rightarrow A'$ is a $p$-adically continuous morphism of $B$-algebras then $f((E,i_N,i_p)_{/A}\otimes_{B} A')=\varrho(f((E,i_N,i_p)_{/A}))$;
	\item[(P2)]  For the Tate curve $Tate(q^N)$ over $\widehat{B((q))}$, which is a $p$-adic completion of $B((q))$, the canonical $p^\infty$-structure $i_{Tate,p}^{can}$ and canonical $\Gamma_0(N)$-level structure $i_{Tate,N}^{can}$, we have $f((Tate(q^N),i_{Tate,N}^{can},i_{Tate,p}^{can}))\in B[[q]]$. (This in fact implies that we have $f((Tate(q^N),i_{Tate,N},i_{Tate,p}))\in B[[q]]$ for all level structures $i_{Tate,N}$ and $i_{Tate,p}$ .)
\end{itemize}
We denote the space of $p$-adic holomorphic modular forms over $B$ by $V(N;B)$. 

The fundamental $q$-expansion principle holds for both algebro-geometric and $p$-adic modular forms (\cite{DeRa} Theorem VII.3.9 and \cite{Ka76} Section 5):
\begin{qexp}[$q$-expansion principle]
\begin{itemize}
	\item[1.] The $q$-expansion maps $G_k(N,\psi;B)\to B[[q]]$ and $V(N;B)\to B[[q]]$ are injective for any ($p$-adic) algebra $B$.
	\item[2.] Let $B\subset B'$ be ($p$-adic) algebras. The following commutative diagrams 
\begin{center} 
$\begin{CD}
G_k(N,\psi;B) @>>> B[[q]] \\
@VVV @VVV\\
G_k(N,\psi;B') @>>> B'[[q]]
\end{CD} \qquad \begin{CD}
V(N;B) @>>> B[[q]] \\
@VVV @VVV\\
V(N;B') @>>> B'[[q]]
\end{CD}$ 
\end{center}
\end{itemize}
are Cartesian, that is, the image of $G_k(N,\psi;B)$ in $G_k(N,\psi;B')$ ($V(N;B)$ in $V(N;B')$) is precisely the set of ($p$-adic) modular forms whose $q$-expansions have coefficients in B.
\end{qexp}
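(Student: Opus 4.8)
The plan is to make both parts geometric and to reduce them to two irreducibility statements together with cohomology-and-base-change. When $N\geq 3$, a form in $G_k(N,\psi;B)$ is the same datum as a global section of the invertible sheaf $\underline{\omega}^{\otimes k}$ on the compactified modular curve $\overline{\mathfrak{M}}(\Gamma_1(N))_{/B}$ which transforms by $\psi$ under the diamond operators, and its $q$-expansion is the pullback of this section along the formal completion of $\overline{\mathfrak{M}}(\Gamma_1(N))$ at the cusp carrying $Tate(q^N)$ with its canonical $\Gamma_1(N)$-structure and the differential $\omega_{Tate}^{can}$; the cases $N\leq 2$ are reduced to this by adding auxiliary prime-to-$p$ level and descending along the (prime-to-$p$, hence invertible on the base) automorphism group of the added structure. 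Likewise a form in $V(N;B)$ is by construction a function on the formal completion of the Igusa tower $\mathrm{Ig}_{/B}$ over the ordinary locus, with $q$-expansion given by its evaluation on the Tate curve equipped with the canonical trivialization. So everything reduces to the full-level objects $G_k(\Gamma_1(N);\cdot)$ and $V(N;\cdot)$, with the nebentypus recovered at the end as the locus on which the diamond operators act by $\psi$.

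For Part 1, the engine is irreducibility: by Deligne--Rapoport, $\overline{\mathfrak{M}}(\Gamma_1(N))$ is smooth and proper over $\mathbb{Z}[1/N]$ with geometrically connected --- hence, being smooth, integral --- fibers, and by Igusa the special fiber of $\mathrm{Ig}$ over $\bar{\mathbb{F}}_p$ is irreducible. I would first prove injectivity over an algebraically closed field $k$: of characteristic $0$ or of a prime $\ell\nmid N$ in the classical case, and over $\bar{\mathbb{F}}_p$ in the $p$-adic case. There a nonzero section of a line bundle on a smooth connected curve has a proper, hence nowhere dense, zero locus, so it cannot vanish on the nonempty formal neighborhood of the cusp; therefore vanishing of the $q$-expansion forces the form itself to vanish. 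To descend to an arbitrary base I would upgrade this: $M:=G_k(\Gamma_1(N);\mathbb{Z}[1/N])$ is finitely generated and torsion-free over the principal ring $\mathbb{Z}[1/N]$, hence free of finite rank, and from the injectivity of $M\to\mathbb{Z}[1/N][[q]]$ modulo every prime (and over $\mathbb{Q}$) the elementary divisor theorem produces a finite set $S\subset\mathbb{Z}_{\geq0}$ for which $f\mapsto(a_n(f))_{n\in S}$ realizes $M$ as a $\mathbb{Z}[1/N]$-module direct summand of $\mathbb{Z}[1/N]^{S}$. Tensoring this split injection with any $B$ yields injectivity of $G_k(\Gamma_1(N);B)\to B[[q]]$, whence also of its submodule $G_k(N,\psi;B)$. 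The $p$-adic case runs identically with $\mathbb{Z}[1/N]$ replaced by the flat, $p$-torsion-free $\mathbb{Z}_p$-module $V(N;\mathbb{Z}_p)$, the ``split injection'' read in the $p$-adic sense after reduction modulo $p^{n}$, using injectivity modulo $p$.

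For Part 2 I would first record that the formation of these spaces commutes with base change: $G_k(\Gamma_1(N);B)=M\otimes_{\mathbb{Z}[1/N]}B$ and $V(N;B)=V(N;\mathbb{Z}_p)\,\widehat{\otimes}_{\mathbb{Z}_p}B$. Classically this is Grothendieck's cohomology-and-base-change for $\underline{\omega}^{\otimes k}$ on the proper flat curve $\overline{\mathfrak{M}}(\Gamma_1(N))_{/\mathbb{Z}[1/N]}$, once one checks that $h^0$ of $\underline{\omega}^{\otimes k}$ is constant in the fibers --- immediate from Riemann--Roch since $\deg\underline{\omega}^{\otimes k}$ is fiberwise constant and $h^1$ vanishes in the relevant degrees, with the finitely many small-weight cases done by hand; the $p$-adic identity is built into Katz's construction of $V(N;\cdot)$. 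Granting base change, the Cartesian property is formal. Given $f\in G_k(N,\psi;B')$ with $q$-expansion in $B[[q]]$, regard $f$ in $G_k(\Gamma_1(N);B')$; the split injection over $\mathbb{Z}[1/N]$, tensored up, shows that $G_k(\Gamma_1(N);B')\cap B[[q]]$ equals the image of $G_k(\Gamma_1(N);B)$, so $f$ descends to some $g\in G_k(\Gamma_1(N);B)$; and for each diamond operator the element $\langle d\rangle g-\psi(d)g$ has $q$-expansion $\langle d\rangle f-\psi(d)f=0$, hence vanishes by injectivity over $B$, so $g\in G_k(N,\psi;B)$. The $p$-adic statement is proved verbatim, with the flatness of $V(N;\mathbb{Z}_p)$ in place of the finite-rank bookkeeping.

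I expect the genuine difficulty to reside entirely in the two irreducibility inputs; the rest --- cohomology-and-base-change, elementary divisors, and the triviality that a line-bundle section on an integral scheme vanishing on a nonempty open vanishes everywhere --- is formal. Geometric connectedness of $\overline{\mathfrak{M}}(\Gamma_1(N))$ over $\mathbb{Z}[1/N]$ is moderately deep: over $\mathbb{C}$ it is the connectedness of $\Gamma_1(N)\backslash\mathfrak{H}^{*}$, it descends to $\mathbb{Q}$ because the cusp is $\mathbb{Q}$-rational so the field of constants is $\mathbb{Q}$, and it spreads to every fiber of the proper smooth $\mathbb{Z}[1/N]$-model by Zariski's connectedness theorem. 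The irreducibility of the Igusa tower modulo $p$ is the deepest ingredient --- Igusa's theorem, reproved through the $p$-adic (or $\ell$-adic) monodromy of the universal ordinary elliptic curve --- and it is exactly this statement that makes the $q$-expansion principle a genuinely $p$-adic phenomenon rather than a formal shadow of the characteristic-zero theory.
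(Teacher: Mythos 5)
The paper itself offers no proof of this theorem: it is quoted with references to Deligne--Rapoport (Theorem VII.3.9) and Katz, and your overall architecture --- irreducibility of the geometric fibres of the modular curve for the classical statement, Igusa's irreducibility of the Igusa tower for the $p$-adic one, plus commutative-algebra bookkeeping over the Dedekind base $\mathbb{Z}[1/N]$ --- is exactly the strategy of those sources. So the comparison is with the standard proof, and at that level your identification of the two deep inputs is correct.

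There is, however, a genuine gap in your implementation: both parts are routed through the base-change identity $G_k(\Gamma_1(N);B)=G_k(\Gamma_1(N);\mathbb{Z}[1/N])\otimes_{\mathbb{Z}[1/N]}B$, justified by cohomology-and-base-change with ``$h^1$ vanishes in the relevant degrees, small weights by hand''. For $k=1$ this identity is false: by Serre duality and Kodaira--Spencer, $H^1$ of $\omega$ on the compactified curve is dual to the space of weight-one cusp forms, and there are well-known mod $p$ weight-one forms (Mestre's examples and others) that do not lift to characteristic zero, so $G_1(\Gamma_1(N);\mathbb{F}_p)$ can be strictly larger than the reduction of the characteristic-zero lattice. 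Since the paper allows $k\geq 1$ and the $q$-expansion principle is nevertheless true in weight one, no handling of ``small-weight cases by hand'' can repair an argument whose first step is this identity: your split-injection argument only sees forms coming from $M\otimes B$, in Part 1 as well as Part 2. The standard proof avoids this: one works with sections of $\omega^{\otimes k}$ over the \emph{open} modular curve, which is affine, so base change along $\mathbb{Z}[1/N]\to B$ is trivial and needs no finiteness or $H^1$-vanishing; holomorphy is imposed as a condition on $q$-expansions; injectivity over residue fields (your irreducibility argument) shows the cokernel of the $q$-expansion map over $\mathbb{Z}[1/N]$ is torsion-free, hence flat over the Dedekind base; and then both injectivity over arbitrary $B$ and the Cartesian property follow by tensoring $0\to B\to B'\to B'/B\to 0$ against this flat cokernel, with the $p$-adic case run the same way over the affine ordinary locus (Igusa tower) modulo $p^n$. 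A smaller slip: your reduction of non-representable level to $N\geq 3$ by taking invariants under the automorphisms of an auxiliary structure requires the order of that group to be invertible in $B$; in the classical statement $B$ is merely a $\mathbb{Z}[1/N]$-algebra, so ``prime to $p$'' is not the relevant condition, and one must instead use two coprime auxiliary levels as in Katz.
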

Using trivialization $\hat{i}_p$ we can push forward the canonical differential $\frac{\mathrm{d}t}{t}$ on $\widehat{\mathbb{G}}_m$ to obtain an invariant differential $\omega_p:=\hat{i}_{p,*}(\frac{\mathrm{d}t}{t})$ on $\widehat{E}$ which then extends to an invariant differential on $E$. Thus for $f\in G_k(N,\psi;B)$ we can define
\[f((E,i_N,i_p)):=f((E,i_N,\omega_p))\]
and we may regard an algebro-geometric holomorphic modular form as a $p$-adic one. By virtue of $q$-expansion principle, $G_k(N,\psi;B)\hookrightarrow V(N;B)$ is an injection preserving $q$-expansions.

\section{Rankin--Selberg L-functions and their special values}

Let $f$  be a normalized Hecke eigen-cusp form of level $\Gamma_0(N)$, $N\geq 1$, weight $k\geq1$, and nebentypus $\psi$ and let $\mathbf{f}$ be its corresponding adelic form on  $G(\mathbb{Q})\backslash G(\mathbb{A})$ with central character $\boldsymbol{\psi}$. The unitarization  $\mathbf{f}^u(g) := \mathbf{f}|\boldsymbol{\psi}(\mathrm{det}(g))|^{-1/2}$  of $\mathbf{f}$ generates a unitary automorphic representation  $\pi_\mathbf{f}$. Let $M$ be an imaginary quadratic field of discriminant $d(M)$. Let $\chi$ be an arithmetic Hecke character of $M^{\times} \backslash M^{\times}_{\mathbb{A}}$ and set $\chi^-:=(\chi \circ c)/|\chi|$ for its unitary projection. We denote by $L(s,\pi_\mathbf{f}\otimes \pi_{\chi^-})$ Rankin--Selberg L-function associated to $\pi_\mathbf{f}$ and automorphic unitary representation $\pi_{\chi^-}$ of $G$ attached to $\chi^-$ by theta series (see \cite{JaLa} and \cite{Ja} Section 19 for definitions). It is initially defined as a product of Euler factors over all places of $\mathbb{Q}$ and has a meromorphic continuation to $\mathbb{C}$ satisfying functional equation
\[L(s,\pi_\mathbf{f}\otimes \pi_{\chi^-})=\epsilon(s, \pi_\mathbf{f}\otimes \pi_{\chi^-})L(1-s,\tilde{\pi}_\mathbf{f}\otimes \pi_{(\chi^-)^{-1}})\]
where $\tilde{\pi}_\mathbf{f}$ denotes contragredient of $\pi_\mathbf{f}$ and $\epsilon(s, \pi_\mathbf{f}\otimes \pi_{\chi^-})$ is certain $\epsilon$-factor. Under key condition
\[\chi|_{\mathbb{A}^\times}= \boldsymbol{\psi}^{-1}\]
we know that $L(s,\pi_\mathbf{f}\otimes \pi_{\chi^-})$ is entire and equal to $L(1-s,\tilde{\pi}_\mathbf{f}\otimes \pi_{(\chi^-)^{-1}})$, whence the functional equation becomes
\[ L(s,\pi_\mathbf{f}\otimes \pi_{\chi^-})=\epsilon(s, \pi_\mathbf{f}\otimes \pi_{\chi^-}) L(1-s,\pi_\mathbf{f}\otimes \pi_{\chi^-})\,.\]
Thus the parity of order of vanishing of $L(s,\pi_\mathbf{f}\otimes \pi_{\chi^-})$ at central critical point $s=1/2$ is determined by the value of sign
\[\epsilon(\pi_\mathbf{f}\otimes \pi_{\chi^-}):=\epsilon(1/2, \pi_\mathbf{f}\otimes \pi_{\chi^-})\in \{\pm 1\}\,. \]
The order of vanishing is expected to be minimal for most characters $\chi^-$, in other words, either $L(1/2,\pi_\mathbf{f}\otimes \pi_{\chi^-})$ or $L'(1/2,\pi_\mathbf{f}\otimes \pi_{\chi^-})$ should be nonzero, depending whether the sign is $+1$ or $-1$, respectively.

The global sign $\epsilon(\pi_\mathbf{f}\otimes \pi_{\chi^-})$ is a product over all places $v$ of $\mathbb{Q}$ of local signs $\epsilon(\pi_{\mathbf{f},v}\otimes \pi_{\chi^-,v})$, which are attached to local components of $\pi_\mathbf{f}$ and $\pi_{\chi^-}$, normalized as in \cite{Gr}. If $\eta$ is quadratic Hecke character of $\mathbb{Q}$ attached to $M$ and we set
\[ S(\chi):= \{ v|\epsilon(\pi_{\mathbf{f},v}\otimes \pi_{\chi^-,v})\not = \eta_v\boldsymbol{\psi}_v(-1)\} \]
then the product formula $\eta\boldsymbol{\psi}(-1)=1=\prod_v \eta_v\boldsymbol{\psi}_v(-1)$ implies
\[ \epsilon(\pi_\mathbf{f}\otimes \pi_{\chi^-})=(-1)^{\#S(\chi)} \, . \]

In this paper we always work with Hecke characters $\chi$ of conductor not only supported at $N\ell$ but rather sufficiently deep there. Combined with the fact that the infinity type of $\chi$ is $(k+2m,0)$ so that $\chi_\infty(a)=a_\infty^{k+2m}$, this leaves the set $S(\chi)$ empty. Indeed, the local $\epsilon$-factors at places $v$ outside $N\ell d(M)$ are equal to 1, as both $\pi_{\mathbf{f},v}$ and $\pi_{\chi^-,v}$ are unramified principal series, and consequently these places do not belong to $S(\chi)$. At places $v|N\ell$, following Section 1.1 of \cite{CoVa}, we use a combination of Proposition 3.8 of \cite{JaLa} and Theorem 20.6 of \cite{Ja}, so that once we impose that $\chi$ is sufficiently ramified at these $v$, none of them belongs to $S(\chi)$. The only finite places that remain are such that $v|d(M)$ but $v\nmid N$. Note that here $\pi_{\mathbf{f},v}$ is unramified principal series and $\boldsymbol{\psi}_v$ is unramified, so in this situation we may use local calculation (3.1.1) and (3.1.2) in \cite{Zh} to conclude that these places do not belong to $S(\chi)$. Finally, as infinite order character $\chi$ has infinity type $(k+2m,0)$ we have $\epsilon(\pi_{\mathbf{f},\infty} \otimes \pi_{\chi^-,\infty})=(-1)^k$ essentially by Tate's thesis (\cite{Ta}) and the archimedean place does not belong to $S(\chi)$. In conclusion, the global $\epsilon$-factor is 1.

We may take the base-change lift  $\hat{\pi}_{\mathbf{f}}$ of $\pi_\mathbf{f}$ to $\mathrm{Res}_{M/\mathbb{Q}}G$ and consider Rankin--Selberg L-function $L(s,\hat{\pi}_{\mathbf{f}}\otimes \chi^-)$ due to the automorphic induction identity $L(s,\hat{\pi}_{\mathbf{f}}\otimes \chi^-)= L(s,\pi_\mathbf{f}\otimes \pi_{\chi^-})$.  

The rationality of central critical L-values $L(1/2,\hat{\pi}_{\mathbf{f}}\otimes \chi^-)$, after being divided by suitable periods, is proved by Shimura in \cite{Sh76}, and is consistent with the conjectures of Deligne. The nature of the period depends on the $\infty$-type of $\chi$, or more precisely, its comparison to the weight $k$ of $f$. In case $\chi$ is of $\infty$-type $(\kappa,0)$ and $\kappa\leq k-2$, the period depends on  $f$ but is independent of $\chi$, and can be expressed in terms of the Shimura periods $u_\pm(f)$. On the other hand, if $\kappa\geq k$, the period is independent of $f$ and depends on $M$ only -- it is a power of CM period attached to $M$.

\section{Shimura curves}
\subsection{Elliptic curves with complex multiplication}
It is well known that a $\mathbb{Z}$-lattice in $M$ is actually a proper ideal of a $\mathbb{Z}$-order $R(\mathfrak{a})=\{\alpha\in R|\alpha\mathfrak{a}\subset\mathfrak{a}\}$  of $M$. On the other hand, every $\mathbb{Z}$-order $\mathcal{O}$ of $M$ is of the form $\mathcal{O}=\mathbb{Z}+cR$ for a rational integer $c$ called the conductor and the following are equivalent (see Proposition 4.11 and (5.4.2) in \cite{IAT} and Theorem 11.3 of \cite{CRT})
\begin{itemize}
\item[(1)] $\mathfrak{a}$ is $\mathcal{O}$-projective fractional ideal
\item[(2)] $\mathfrak{a}$ is locally principal, i.e. \frenchspacing the localization at each prime is principal
\item[(3)] $\mathfrak{a}$ is a proper $\mathcal{O}$-ideal, i.e. \frenchspacing $\mathcal{O}=R(\mathfrak{a})$.  
\end{itemize}
Thus, one can define class group $\mathrm{Cl}_M^-(\mathcal{O}):=\mathrm{Pic}(\mathcal{O})$ to be the group of $\mathcal{O}$-projective fractional ideals modulo globally principal ideals. It is a finite group called the ring class group of conductor $c$, if $c$ denotes the conductor of $\mathcal{O}$. 

In this paper, we are concerned with orders $R_{c\ell^n}:=\mathbb{Z}+c\ell^nR$ and their ring class groups $\mathrm{Cl}^-_n:=\mathrm{Pic}(R_{c\ell^n})$ when $n\geq 0$, where $c$ is a fixed choice of integer prime to $\ell$ that will always be clear from the context. By class field theory, $\mathrm{Cl}^-_n$ is the Galois group $\mathrm{Gal}(H_{c\ell^n}/M)$ of the ring class field $H_{c\ell^n}$ of conductor $c\ell^n$. We define anticyclotomic class group modulo $c\ell^\infty$, $\mathrm{Cl}^-_\infty:=\underleftarrow{\lim}_n\mathrm{Cl}^-_n$ for the projection $\pi_{m+n,n}:\mathrm{Cl}^-_{m+n}\to\mathrm{Cl}^-_n$ taking $\mathfrak{a}$ to $\mathfrak{a}R_{c\ell^n}$. Then the group $\mathrm{Cl}^-_\infty$ is isomorphic to the Galois group of the maximal ring class field $H_{c\ell^\infty}=\bigcup_n H_{c\ell^n}$ of conductor $c\ell^\infty$ of $M$. 
If a $\mathbb{Z}$-lattice $\mathfrak{a}\subset M$ has $p$-adic completion $\mathfrak{a}_p=\mathfrak{a}\otimes_\mathbb{Z}\mathbb{Z}_p$ identical to $R\otimes_\mathbb{Z}\mathbb{Z}_p$, we consider a complex torus $\X{\mathfrak{a}}(\mathbb{C})=\mathbb{C}/\mathfrak{a}$. By the main theorem of complex multiplication (\cite{ACM} 18.6), this complex torus is algebraizable to an elliptic curve having complex multiplication by $M$ and defined over a number field. Then applying Serre--Tate's criterion of good reduction (\cite{SeTa}) we can conclude that \X{\mathfrak{a}} is actually defined over the field of fractions $\mathcal{K}$ of $\mathcal{W}$ and extends to an elliptic curve over $\mathcal{W}$ still denoted by \Xw{\mathfrak{a}}{\mathcal{W}}. All endomorphisms of \Xw{\mathfrak{a}}{\mathcal{W}} are defined over $\mathcal{W}$ and its special fiber $\Xw{\mathfrak{a}}{\bar{\mathbb{F}}_p}=\Xw{\mathfrak{a}}{\mathcal{W}}\otimes \bar{\mathbb{F}}_p$ is ordinary by assumption that $p=\mathfrak{p}\bar{\mathfrak{p}}$ splits in $M$.

Let $\mathcal{T}(\X{\mathfrak{a}})=\underleftarrow{\lim}_N\X{\mathfrak{a}}[N](\bar{\mathbb{Q}})$ be its Tate module. Choice of a $\widehat{\mathbb{Z}}$-basis $(w_1,w_2)$ of $\widehat{\mathfrak{a}}=\mathfrak{a}\otimes_{\mathbb{Z}}\widehat{\mathbb{Z}}$ gives rise to a level $N$-structure $\eta_N(\mathfrak{a}):(\mathbb{Z}/N\mathbb{Z})^2\cong \X{\mathfrak{a}}[N]$ given by $\eta_N(\mathfrak{a})(x,y)=\frac{xw_1+xw_2}{N}\in \X{\mathfrak{a}}[N]$. After taking their inverse limit and tensoring with $\mathbb{A}^{(\infty)}$ we get level structure
\[ \eta(\mathfrak{a}) = \underleftarrow{\lim}_N\eta_N(\mathfrak{a}):(\mathbb{A}^{(\infty)})^2\cong \mathcal{T}(\X{\mathfrak{a}})\otimes_{\widehat{\mathbb{Z}}}\mathbb{A}^{(\infty)}=:V(\X{\mathfrak{a}}) \; . \] 
We can remove $p$-part of $\eta(\mathfrak{a})$ and define level structure $\eta^{(p)}(\mathfrak{a})$ that conveys information about all prime-to-$p$ torsion in \X{\mathfrak{a}}:
\[ \eta^{(p)}(\mathfrak{a}):(\mathbb{A}^{(p\infty)})^2\cong
\mathcal{T}(\X{\mathfrak{a}})\otimes_{\widehat{\mathbb{Z}}}\mathbb{A}^{(p\infty)}=:V^{(p)}(\X{\mathfrak{a}}) \; . \]
Prime-to-$p$ torsion in \Xw{\mathfrak{a}}{\mathcal{W}} is unramified at $p$ and $\X{\mathfrak{a}}[N]$ for $p\nmid N$ is \'etale whence constant over $\mathcal{W}$, so the level structure $\eta^{(p)}(\mathfrak{a})$ is still defined over $\mathcal{W}$ (\cite{ACM} 21.1 and \cite{SeTa}). 

Since \Xw{\mathfrak{a}}{\mathcal{W}} has ordinary reduction over $\mathcal{W}$, we get ordinary part of level structure at $p$, $\ord{\mathfrak{a}}:\mu_{p^\infty}\hookrightarrow\X{\mathfrak{a}}[p^\infty]$, identifying $\mu_{p^\infty}$ with connected component $\X{\mathfrak{a}}[p^\infty]^\circ \cong \X{\mathfrak{a}}[\mathfrak{p}^\infty]$. The Cartier duality then yields \'etale part of level structure at $p$, $\et{\mathfrak{a}}:\mathbb{Q}_p/\mathbb{Z}_p\cong\X{\mathfrak{a}}[p^\infty]^{\acute{e}t}\cong \X{\mathfrak{a}}[\bar{\mathfrak{p}}^\infty]$ over $\mathcal{W}$.
In this way we constructed a triple
\[\xw{\mathfrak{a}}{\mathcal{W}}\]
to which we refer as a test object.

\subsection{Definitions and basic facts}\label{sec:dbf}
The pairs $(X,\eta^{(p)})_{/S}$ for a $\mathbb{Z}_{(p)}$-scheme $S$, consisting of elliptic curve $X$ over $S$ and a $\mathbb{Z}$-linear isomorphism $\eta^{(p)}:(\mathbb{A}^{(p\infty)})^2\cong \mathcal{T}^{(p)}(X)\otimes_{\widehat{\mathbb{Z}}}\mathbb{A}^{(\infty)}=:V(X)$, are classified up to isogenies of degree prime to $p$ by a $p$-integral model $Sh_{/\mathbb{Z}_{(p)}}^{(p)}$ (\cite{Ko}) of the Shimura curve  $Sh_{/\mathbb{Q}}$ associated to algebraic group $G=\mathrm{GL}(2)_{/\mathbb{Q}}$. $Sh$ was initially constructed by Shimura in \cite{Sh66} but nicely reinterpreted by Deligne in \cite{De71} 4.16-4.22 as a moduli of abelian schemes up to isogenies. By its construction, $Sh^{(p)}$ is smooth $\mathbb{Z}_{(p)}$-scheme representing moduli functor $\mathcal{F}^{(p)}$ from the category of  $\mathbb{Z}_{(p)}$-schemes to \textit{SETS}:
\[ \mathcal{F}^{(p)}(S)=\{(X,\eta^{(p)})_{/S}\}_{/\approx} \] 
and two pairs $(X,\eta^{(p)})_{/S}$ and $(X',\eta'^{(p)})_{/S}$ are isomorphic up to prime-to-$p$ isogeny, which we write $(X,\eta^{(p)})_{/S}\approx (X',\eta'^{(p)})_{/S}$, if there exists an isogeny $\phi:X_{/S}\to X'_{/S}$ of degree prime to $p$ such that $\phi\circ\eta=\eta'$.

The pair $x(\mathfrak{a})= \xunrw{\mathfrak{a}}{\mathcal{W}}$ for a $\mathbb{Z}$-lattice $\mathfrak{a}$ with $\mathfrak{a}\otimes_\mathbb{Z}\mathbb{Z}_p =R\otimes_\mathbb{Z}\mathbb{Z}_p$, constructed as above, gives rise to a $\mathcal{W}$-point on $Sh^{(p)}$ to which we refer as CM point.

Each adele $g\in G(\mathbb{A}^{(\infty)})$ acts on a level structure $\eta$ by $\eta\mapsto\eta\circ g^{(p\infty)}$ inducing $G(\mathbb{A}^{(\infty)})$-action on $Sh$.
A sheaf theoretic coset $\bar{\eta}=\eta K$ for an open compact subgroup $K\subset G(\mathbb{A}^{(\infty)})$ maximal at $p$ (i.e. $K_p=G(\mathbb{Z}_p)$) is called a level $K$-structure. The quotient $Sh_K=Sh/K$ represents the following quotient functor
\[\mathcal{F}_K(S)=\{(E,\bar{\eta})_{/S}\}_{/\approx} \qquad \] 
and $Sh=\underleftarrow{\lim}_K Sh_K$ when $K$ runs over open compact subgroups of $G(\mathbb{A}^{(p\infty)})$. Let $V_{K/\bar{\mathbb{Q}}}$ denote geometrically irreducible component of $Sh^{(p)}_K\otimes_\mathbb{Q}\bar{\mathbb{Q}}$ containing geometric point $x(\mathfrak{a})=x(\mathfrak{a})\otimes_\mathbb{Q}\bar{\mathbb{Q}}$. The field of definition of $V_K$ in the sense of Weil is contained in $\mathcal{K}$, and we can think of schematic closure $V_{K/\mathcal{W}}$ in $Sh^{(p)}_{K/\mathcal{W}}$ that is smooth over $\mathcal{W}$ if $K^{(p)}$ is sufficiently small. The special fiber $V_K\otimes_{\mathcal{W}}\bar{\mathbb{F}}_p$ remains irreducible (see \cite{HidaDwork} Section 2.2 for details). We define
\[ V^{(p)}:=\underleftarrow{\lim}_K V_{K/\mathcal{W}}\]
where $K$ ranges over all open compact subgroups of $G(\mathbb{A}^{(\infty)})$ maximal at $p$. In conclusion, the scheme $V^{(p)}$ is smooth over $\mathcal{W}$, and its generic and special fibers are geometrically irreducible. In this sense, we refer to $V^{(p)}_{/\mathcal{W}}$ as geometrically connected component of $Sh^{(p)}$ containing the $\mathcal{W}$-point $x(\mathfrak{a})$.

When $K$ is chosen suitably (\cite{PAF} Section 4.2.1), $V_K$ is isomorphic to a, fine or coarse, moduli scheme $\mathfrak{M}(\Gamma(N))_{/\mathbb{Z}_{(p)}[\mu_N]}$ of level $\Gamma(N)$ (the principal congruence subgroup) representing functor
from the category of $\mathbb{Z}_{(p)}$-schemes to the category \textit{SETS}
\[ \mathcal{P}_{\Gamma(N)}(S)=[(E,\phi_N:(\mathbb{Z}/N\mathbb{Z})\cong E[N])_{/S}]_{/\cong}\]
and consequently
\[V^{(p)}_{/\mathcal{W}}\cong\underleftarrow{\lim}_{p\nmid N} \mathfrak{M}(\Gamma(N))_{/\mathcal{W}} \,. \]
Clearly,
\[ \mathfrak{M}(\Gamma(N))_{/\mathbb{Z}[1/N,\mu_N]}=\bigsqcup_{\zeta \in \mu_N} \mathfrak{M}(\Gamma(N),\zeta)\] 
where $\mathfrak{M}(\Gamma(N),\zeta)$ is a modular curve of level $\Gamma(N)$ representing functor from the category of $\mathbb{Z}[1/N]$-schemes to the category \textit{SETS}
\[ \mathcal{P}_{\Gamma(N), \zeta}(S)=[(E,\phi_N:(\mathbb{Z}/N\mathbb{Z})\cong E[N])_{/S}|\langle \phi_N(1,0),\phi_N(0,1)\rangle=\zeta]_{/\cong}\]
and $\langle \cdot,\cdot\rangle$ denotes the Weil pairing.

The complex points of the Shimura curve $Sh$ have the following expression:
\[ Sh(\mathbb{C})= G(\mathbb{Q})\left\backslash \left( \mathfrak{X}\times G(\mathbb{A}^{(\infty)})\right) \right/ Z(\mathbb{Q})\]
where $Z$ stands for center of $G$ and the action is given by $\gamma(z,g)u=(\gamma(z),\gamma gu)$ for $\gamma \in G(\mathbb{Q})$ and $u\in Z(\mathbb{Q})$ (\cite{De79} Proposition 2.1.10 and \cite{Mi} page 324 and Lemma 10.1). 

To each point $(X,\eta)\in Sh$ we can associate a lattice $\widehat{L}=\eta^{-1}(\mathcal{T}(X))\subset (\mathbb{A}^{(\infty)})^2$ and the level structure $\eta$ is determined by the choice of the basis $w=(w_1,w_2)$ of $\widehat{L}$ over $\widehat{\mathbb{Z}}$. In the view of basis $w$, the $G(\mathbb{A}^{(\infty)})$-action on $Sh$ given by $(X,\eta)\mapsto(X,\eta\circ g)$ is a matrix multiplication $w^\intercal\mapsto g^{-1}w^\intercal$ because $(\eta\circ g)^{-1}(\mathcal{T}(X))=g^{-1}\eta^{-1}(\mathcal{T}(X))=g^{-1}\widehat{L}$, where $\intercal$ stands for a transpose. We warn the reader about the following
\begin{switch} \label{switch} { \em
Insisting on modular point of view, the action of matrix $g^{-1}$ records change of the basis vectors themselves, rather than coordinates with respect to the basis. Having this on mind and desiring to view modular forms in adelic, algebro-geometric and $p$-adic phrasing in coherent way, it becomes more convenient for us to use identifications $R\otimes_{\mathbb{Z}}\mathbb{Z}_p=R_{\bar{\mathfrak{p}}}\oplus  R_{\mathfrak{p}}$ and  $\X{R}[p^n]=\X{R}[\bar{\mathfrak{p}}^n]\oplus\X{R}[\mathfrak{p}^n]=\mathbb{Z}/p^n\mathbb{Z}\oplus\mu_{p^n}$, $n\geq 1$, in constructing level structures for our CM points due to the definition of nebentypus. }
\end{switch}

\subsection{Hecke relation among CM points}\label{sec:heckecmpts}
Let $c$ be a fixed positive integer prime to $\ell$, and $n\geq 0$. In this section we associate to each proper $R_{c\ell^n}$-ideal $\mathfrak{a}$ prime to $p$ a CM point on $Sh^{(p)}$ and describe Hecke relation among these points. 

A choice of $\widehat{\mathbb{Z}}$-basis $(w_1,w_2)$ of $\widehat{R}=R\otimes_{\mathbb{Z}}\widehat{\mathbb{Z}}$ gives rise to a level structure $\eta^{(p)}(R):(\mathbb{A}^{(p\infty)})^2\cong V^{(p)}(\X{R})$ defined over $\mathcal{W}$ as explained above. 
We fix such a basis so that its $p$-component $(w_{1,p},w_{2,p})$ is given by $(e_{\bar{\mathfrak{p}}}, e_{\mathfrak{p}})$ where $e_{\bar{\mathfrak{p}}}$ and $e_{\mathfrak{p}}$ are idempotents of $R_{\bar{\mathfrak{p}}}$ and $R_{\mathfrak{p}}$, respectively. At $\ell$, we can write $R\otimes_{\mathbb{Z}}\mathbb{Z}_\ell=\mathbb{Z}_\ell[\sqrt{d}]$ with $d\in \mathbb{Z}_\ell$, choosing $d\in  \mathbb{Z}_\ell^\times$ if $R\otimes_{\mathbb{Z}}\mathbb{Z}_\ell$ is unramified over $\mathbb{Z}_\ell$, and set $(w_{1,\ell},w_{2,\ell})=(\sqrt{d},1)$. We also fix once and for all an invariant differential $\omega(R)$ on \Xw{R}{\mathcal{W}} so that $H^0(\X{R},\Omega_{\Xw{R}{\mathcal{W}}})=\mathcal{W}\omega(R)$. 

Let $\mathfrak{a}$ be a proper $R_{c\ell^n}$-ideal whose $p$-adic completion $\mathfrak{a}_p=\mathfrak{a}\otimes_\mathbb{Z}\mathbb{Z}_p$ is identical to $R\otimes_\mathbb{Z}\mathbb{Z}_p$. Regarding $c\ell^n$ as an element of $\mathbb{A}^\times$, $(c\ell^nw_1,w_2)$ is a basis of $\widehat{R}_{c\ell^n}$ over $\widehat{\mathbb{Z}}$ yielding a level structure $\eta^{(p)}(R_{c\ell^n}):(\mathbb{A}^{(p\infty)})^2\cong V^{(p)}(X(R_{c\ell^n}))$. Choosing a complete set of representatives $\{a_1,\ldots,a_{H^-}\}\subset M^{\times}_{\mathbb{A}}$ so that $M^{\times}_{\mathbb{A}}=\bigsqcup_{j=1}^{H^-}M^\times a_j\widehat{R}_{c\ell^n}^\times M_\infty^\times$ we have $\widehat{\mathfrak{a}}=\alpha a_j\widehat{R}_{c\ell^n}$ for some $\alpha\in M^\times$ and $1\leq j\leq H^-$ and we can define $\eta^{(p)}(\mathfrak{a})=\alpha^{-1} a_j^{-1} \eta^{(p)}(R_{c\ell^n})$ so that we have commutative diagram
\[\begin{CD}
\mathcal{T}^{(p)}(\X{\mathfrak{a}}) @<\alpha a_j<< \mathcal{T}^{(p)}(\X{R_{c\ell^n}})\\
@A{\cong}A\eta^{(p)}(\mathfrak{a})A @A{\cong}A\eta^{(p)}(R_{c\ell^n})A\\
\widehat{\mathfrak{a}}^{(p)} @<\alpha a_j<< \widehat{R}_{c\ell^n}^{(p)}
\end{CD}\qquad .\]
Note that $\omega(R)$ induces a differential $\omega(\mathfrak{a})$ on \X{\mathfrak{a}} first by pulling back $\omega(R)$ from  \X{R} to \X{R\cap\mathfrak{a}} and then by pull-back inverse from \X{R\cap\mathfrak{a}} to \X{\mathfrak{a}}.

Assume from now that $n\geq 1$.
Let $C\subset \X{R_{c\ell^n}}[\ell]$ be a rank $\ell$ subgroup scheme of a finite flat group scheme $\X{R_{c\ell^n}}[\ell]$ that is \'etale locally isomorphic to $\mathbb{Z}/\ell\mathbb{Z}$ after faithfully flat extension of scalars. We consider geometric quotient of \X{R_{c\ell^n}} by such finite flat subgroup schemes $C$ (\cite{ABV} Section 12). The quotient map $\pi:\X{R_{c\ell^n}}\twoheadrightarrow\X{R_{c\ell^n}}/C$ is \'etale over $\mathcal{W}$ so we transfer level structure $\pi_{*}\eta^{(p)}(R_{c\ell^n})=\pi\circ\eta^{(p)}(R_{c\ell^n})$, as well as an invariant differential $(\pi^*)^{-1}\omega(R_{c\ell^n})$ to $\X{R_{c\ell^n}}/C$. 

Note that ideal $\ell_n=\ell\mathbb{Z} + \ell^n R= \ell R_{c\ell^{n-1}}$ is a prime ideal of $R_{c\ell^n}$ but not a proper one -- it is a proper 
ideal of $R_{c\ell^{n-1}}$, and we have $X(R_{c\ell^n})[\ell_n]= R_{c\ell^{n-1}} / R_{c\ell^n}$, so $X(R_{c\ell^n})/X(R_{c\ell^n})[\ell_n]\cong
X(R_{c\ell^{n-1}})$. Assume now that $C\subset X(R_{c\ell^n})[\ell]$ is different from $X(R_{c\ell^n})[\ell_n]$. Note that there are precisely $\ell$ such group subschemes. If $\mathfrak{a}$ is a lattice so that $\X{R_{c\ell^n}}/C=\X{\mathfrak{a}}$ then $\mathfrak{a}/R_{c\ell^n}=C$ and $\mathfrak{a}$ is a $\mathbb{Z}$-lattice of $M$ because $C$ is $\mathbb{Z}$-submodule. Since $\ell C=0$ we have $\ell R_{c\ell^n}\mathfrak{a}\subset\mathfrak{a}$ which means that $\mathfrak{a}$ is $R_{c\ell^{n+1}}$-ideal. Moreover $\mathfrak{a}$ is not $R_{c\ell^n}$-submodule so we conclude that $\mathfrak{a}$ is a proper $R_{c\ell^{n+1}}$-ideal. Since $C$ generates over $R_{c\ell^n}$ all $\ell$-torsion points of $X(R_{c\ell^n})$, we have $\mathfrak{a}R_{c\ell^n}=\ell^{-1}R_{c\ell^n}$, so proper ideal class of $\mathfrak{a}$ in $\mathrm{Cl}^-_{n+1}$ projects down to (identity) class of $R_{c\ell^n}$ in $\mathrm{Cl}^-_n$. In conclusion, this procedure gives rise to $\ell$ CM points $x(\mathfrak{a})$ on $Sh$ for representatives $\mathfrak{a}$ of precisely $\ell$ proper ideal classes in $\mathrm{Cl}^-_{n+1}$ that project down to a given (identity) class in $\mathrm{Cl}^-_n$.

It was computed in Section 3.1 of \cite{HidaDwork} that, in the sense of Deligne's interpretation of $Sh$, corresponding $G(\mathbb{A}^{(\infty)})$-action is given by $x(\mathfrak{a})=x(R_{c\ell^n})\circ \bigl(\begin{smallmatrix} 1 & \frac{u}{\ell} \\ 0 & 1 \end{smallmatrix} \bigr)$ when $u$ ranges through $\mathbb{Z}/\ell\mathbb{Z}$ (see (3.1) in Section 3.1 of \cite{HidaDwork}). Thus, for any fixed $\mathfrak{b}_0\in \mathrm{Cl}^-_{n}$ and $\mathfrak{a}_0\in \mathrm{Cl}^-_{n+1}$, such that $ \pi_{n+1,n}(\mathfrak{a}_0)=\mathfrak{b}_0$, we have
\[
\{x(\mathfrak{a})|\mathfrak{a}\in \mathrm{Cl}^-_{n+1} \text{ and } \pi_{n+1,n}(\mathfrak{a})=\mathfrak{b}_0\}=\{x(\mathfrak{a}_0)\circ \bigl(\begin{smallmatrix} 1 & \frac{u}{\ell} \\ 0 & 1 \end{smallmatrix} \bigr) | u\in \mathbb{Z}/\ell\mathbb{Z} \}\,.
\]
More generally, for any $s\geq 1$, and fixed $\mathfrak{b}_0\in \mathrm{Cl}^-_{n}$, $\mathfrak{a}_0\in \mathrm{Cl}^-_{n+s}$ such that $ \pi_{n+s,n}(\mathfrak{a}_0)=\mathfrak{b}_0$, we have
\begin{equation}\label{proj}
\{x(\mathfrak{a})|\mathfrak{a}\in \mathrm{Cl}^-_{n+s} \text{ and } \pi_{n+s,n}(\mathfrak{a})=\mathfrak{b}_0\}=\{x(\mathfrak{a}_0)\circ \bigl(\begin{smallmatrix} 1 & \frac{u}{\ell^s} \\ 0 & 1 \end{smallmatrix} \bigr) | u\in \mathbb{Z}/\ell^s\mathbb{Z} \}\,.
\end{equation}

\subsection{Isogeny action on modular forms} \label{sec:isog}
Let $N$ be a positive integer prime to $p$. Note that a point $x=(E,\eta^{(p)})\in Sh^{(p)}(S)$, for a $\mathcal{W}$-scheme $S$, projects down to $x=(E,i_N)\in \mathfrak{M}(\Gamma_1(N))(S)$ where $i_N= \eta^{(p)} \; \mathrm{mod} \; \widehat{\Gamma}_1(N)$ and the coset is taken as a sheaf theoretic one. Thus, if $\underline{X}=(X,\eta^{(p)},\eta_p^{\mathrm{ord}}, \omega)$ is a test object, an algebro-geometric modular form $f\in G_k(\Gamma_0(N),\psi;\mathcal{W})$ can be evaluated at this test object via
\[f(\underline{X}):=f(X, i_N, \omega )\]
and the same holds for a $p$-adic modular form $f\in V(N;W)$ via
\[ f(\underline{X}):= f(X, i_N, \eta_p^{\mathrm{ord}}) \,.\]
In this paper, $\widehat{\Gamma}_0(N)=\{\bigl(\begin{smallmatrix} a & b \\ c & d \end{smallmatrix} \bigr)\in G(\widehat{\mathbb{Z}})|\, c\in N\widehat{\mathbb{Z}} \}$ and $\widehat{\Gamma}_1(N)=\{\bigl(\begin{smallmatrix} a & b \\ c & d \end{smallmatrix} \bigr)\in \widehat{\Gamma}_0(N) |\, d-1\in N\widehat{\mathbb{Z}} \}$.
 
Second, by universality, we get a morphism $\mathfrak{M}(N)\to Sh^{(p)}/\widehat{\Gamma}_1(N)$ and the image of $\mathfrak{M}(N)$ gives a geometrically irreducible component of $Sh^{(p)}/\widehat{\Gamma}_1(N)$. The $G(\mathbb{A}^{(\infty)})$-action on $Sh^{(p)}$ given by $\eta^{(p)}\mapsto \eta^{(p)}\circ g^{(p)}$ is geometric preserving the base scheme $\mathop{Spec}(\mathcal{W})$. 

Let $q$ be a prime outside $N p\ell$. For any pair $(X, \eta^{(p)})_{/S}$, where $S$ is a $\mathcal{W}$-scheme, taking sheaf theoretic coset $\eta^{(p)}\;\mathrm{mod}\;\widehat{\Gamma}_0(q)$ induces a finite group subscheme $C$ of $X$ defined over $S$ and isomorphic to $\mathbb{Z}/q\mathbb{Z}$ \'etale locally. Thus, it makes sense to consider a level $\Gamma_0(q)$ test object $\underline{X}=(X,C,\eta^{(pq)},\eta_p^{\mathrm{ord}}, \omega)$ and we can construct canonically its image under $q$-isogeny
\[[q](X,C,\eta^{(pq)},\eta_p^{\mathrm{ord}}, \omega)=(X/C, \pi_*\eta^{(pq)}, \bar{\eta}_q, \pi_*\eta_p^{\mathrm{ord}},(\pi^*)^{-1}\omega )\]
for the projection $\pi:X\twoheadrightarrow X/C$, and $\bar{\eta}_q=\eta_q\cdot G(\mathbb{Z}_q)$ for any $\eta_q:\mathbb{Z}_q\cong\mathcal{T}_q(X/C)$.
This gives rise to a linear operator $[q]:V(N;W)\to V(qN;W)$ by $f|[q](\underline{X})=f([q](\underline{X}))$. For a test object $(L,C,i)$ in the lattice viewpoint, this is tantamount to choosing a lattice $L_C \supset L$ with $L_C/L \cong \mathbb{Z}/q\mathbb{Z}$ and $f|[q](L,C,i)=f(L_C,i)$. Over $\mathbb{C}$, from the classical point of view this is nothing but $f|[q](z)=\psi(q)q^kf(qz)$.

Let $(X(\mathfrak{a}),\eta^{(p)}(\mathfrak{a}), \ord{\mathfrak{a}}, \omega({\mathfrak{a}}))$ be a test object associated to $\mathbb{Z}$-lattice $\mathfrak{a}$ of conductor prime to $p$. If $q=\mathfrak{q}\bar{\mathfrak{q}}$ splits in $M$, then we can choose $\eta_q$ to be induced by \[X(\mathfrak{a})[q^\infty]\cong X(\mathfrak{a})[\bar{\mathfrak{q}}^\infty] \oplus  X(\mathfrak{a})[\mathfrak{q}^\infty] \cong M_{\bar{\mathfrak{q}}}/R_{\bar{\mathfrak{q}}} \oplus M_{\mathfrak{q}}/R_{\mathfrak{q}} \cong \mathbb{Q}_q/\mathbb{Z}_q \oplus \mu_{q^\infty}\]
so that we have a level $\Gamma_0(q)$-structure $C=X(\mathfrak{a})[\mathfrak{q}]$ on $X(\mathfrak{a})$ which depends on the choice of factor $\mathfrak{q}$. Then $[q](X(\mathfrak{a}))=X(\mathfrak{q}^{-1}\mathfrak{a})$. 

In summary, when $q$ is split in $M$, $q$-isogeny action $[q]$ on $Sh$ corresponds to $g\in G(\mathrm{A}^{(\infty)})$-action by (concentrated-at-$q$) matrix $\bigl(\begin{smallmatrix} 1 & 0 \\ 0 & q \end{smallmatrix} \bigr)\in G(\mathbb{Q}_q)\subset G(\mathbb{A}^{(\infty)})$ and we can write $f|[q]=f|\bigl(\begin{smallmatrix} 1 & 0 \\ 0 & q \end{smallmatrix} \bigr)$ where we define action 
\begin{equation}\label{isogmf}
f|g(X,\eta,\omega) =f(X, \eta\circ g, \omega)\,.
\end{equation} 
for $g\in G(\mathrm{A}^{(\infty)})$ and $f\in V(N;W)$.

\subsection{Differential operators}\label{diffop}

Recall the definition of Maass--Shimura differential operators on $\mathfrak{H}$ indexed by $k\in\mathbb{Z}$:
\[ \delta_k = \frac{1}{2\pi\mathbf{i}}\left(\frac{\partial}{\partial z}+\frac{k}{z-\bar{z}}\right)\; \text{and}\; \delta_k^r=\delta_{k+2r-2}\ldots\delta_k \]
for a non-negative integer $r$. They preserve rationality of a value at a CM point (\cite{AAF} III and \cite{Sh75}) when applied to modular forms. Let $\mathfrak{a}$ be a proper $R_{cp^n}$-ideal prime to $p$. 
Note that the complex uniformization $\X{\mathfrak{a}}(\mathbb{C})=\mathbb{C}/\mathfrak{a}$ induces a canonical invariant differential $\omega_\infty(\mathfrak{a})$ in $\Omega_{\X{\mathfrak{a}}/\mathbb{C}}$ by pulling back $\mathrm{d}u$, where $u$ is the standard variable on $\mathbb{C}$. Then one can define a period $\Omega_\infty\in\mathbb{C}^\times$ by $\omega(\mathfrak{a})=\Omega_\infty\omega_\infty(\mathfrak{a})$ (\cite{Ka} Lemma 5.1.45). Note that $\Omega_\infty$ does not depend on $\mathfrak{a}$ since $\omega(\mathfrak{a})$ is induced by $\omega(R)$ on \X{R} by construction. Then for $f\in G_k(N,\psi;\mathcal{W})$ we have
\[ \qquad \frac{\delta_k^rf(x(\mathfrak{a}),\omega_\infty(\mathfrak{a}))}{\Omega_\infty^{k+2r}}=\delta_k^rf(x(\mathfrak{a}),\omega(\mathfrak{a}))\in\mathcal{W}\]
(\cite{Ka} Theorem 2.4.5).

Katz introduced a purely algebro-geometric definition of Maass--Shimura differential operator (\cite{Ka} Chapter II) by interpreting it in terms of Gauss--Manin connection of the universal abelian variety with real multiplication over $\mathfrak{M}$. In this way he extended the operator $\delta_* $ to algebro-geometric and $p$-adic modular forms; we denote the latter extension of $\delta^r_*$ by $d^r:V(N;W)\to V(N;W)$. The ordinary part of level structure at $p$, $\ord{\mathfrak{a}}:\mu_{p^\infty}\cong \X{\mathfrak{a}}[\mathfrak{p}^\infty]$ induces trivialization $\widehat{\mathbb{G}}_m\cong \widehat{\X{\mathfrak{a}}}$ for the $p$-adic formal completion $\widehat{\X{\mathfrak{a}}}_{/W}$ of \X{\mathfrak{a}} along its zero-section. We obtain an invariant differential $\omega_p(\mathfrak{a})$ on $\widehat{\X{\mathfrak{a}}}_{/W}$ by pushing forward $\frac{\mathrm{d}t}{t}$ on $\widehat{\mathbb{G}}_m$, which then extends to an invariant differential on \Xw{\mathfrak{a}}{W} also denoted by $\omega_p(\mathfrak{a})$. Then one can define a period $\Omega_p\in W^\times$, independent of $\mathfrak{a}$, by $\omega(\mathfrak{a})=\Omega_p \omega_p(\mathfrak{a})$ (\cite{Ka} Lemma 5.1.47). The fact that will be of instrumental use for us is
\begin{equation}\label{K-S} \frac{(d^rf)(x(\mathfrak{a}),\omega_p(\mathfrak{a}))}{\Omega_p^{k+2r}}=(d^rf)(x(\mathfrak{a}),\omega(\mathfrak{a}))=(\delta_k^rf)(x(\mathfrak{a}),\omega(\mathfrak{a}))\in\mathcal{W}
\end{equation}
(\cite{Ka} Theorem 2.6.7). The effect of $d^m$ on $q$-expansion of a modular form is given by
\begin{equation}\label{diffqexp}
d^m \sum_{n\geq 0}a(n,f)q^n = \sum_{n\geq0}n^ma(n,f)q^n\,.
\end{equation}
(\cite{Ka} (2.6.27)).

\section{Hida's explicit Waldspurger formula}
\label{sec:hewf}

We adelize Maass--Shimura $m$-th derivative $\delta_k^mf$, $m\geq 0$, to a function $\mathbf{f}_m$ on 
$G(\mathbb{A})$ as in Section 3.1 of \cite{HidaNV}. We regard $X=\frac{1}{2}\bigl(\begin{smallmatrix} 1 & \mathbf{i} \\ \mathbf{i} & -1 \end{smallmatrix} \bigr)\in\mathfrak{sl}_2(\mathbb{C})$ -- a Lie algebra of $\mathrm{SL}_2(\mathbb{C})$, as an invariant differential operator $X_{g_\infty}$ on $\mathrm{SL}_2(\mathbb{C})$ for the variable matrix $g_\infty\in G(\mathbb{R})$ (here identifying $G(\mathbb{R})$ with $\mathrm{SL}_2(\mathbb{R})\times\mathbb{R}^\times$ by the natural isogeny), and set
\[ \mathbf{f}_m(g)=(-4\pi)^{-m}|\mathrm{det}(g)|_\mathbb{A}^{-m}X^m_{g_\infty}\mathbf{f}(g) \]
where $g_\infty$ is infinite part of $g\in G(\mathbb{A})$. Then $\mathbf{f}_m(g_\infty)=(\delta_k^mf)(g_\infty(\mathbf{i}))j(g_\infty,\mathbf{i})^{-k-2m}$, and 
when $\mathrm{det}(g_{\infty})=1$ we have
\begin{equation}\label{der}
\mathbf{f}_m(g)=|\mathrm{det}(g^{(\infty)})|_\mathbb{A}^{-m} \delta_k^m\mathbf{f}(g)
\end{equation}
where $\delta_k^m\mathbf{f}:G(\mathbb{Q})\backslash G(\mathbb{A})\to \mathbb{C}$ is the arithmetic lift of $\delta_k^mf$ as above, given by $\delta_k^m\mathbf{f}(\alpha u g_\infty)=\delta_k^mf(g_\infty(\mathbf{i}))\psi(u)j(g_\infty,\mathbf{i})^{-k-2m}$ for $\alpha\in G(\mathbb{Q})$, $u\in \widehat{\Gamma}_0(N)$ and $g_\infty \in \mathrm{GL}_2^+(\mathbb{R})$ (\cite{HidaNV} Definition 3.3 and Lemma 3.1). Here $g^{(\infty)}$ is finite part of $g\in G(\mathbb{A})$. The central character of $\mathbf{f}_m$ is given by $\boldsymbol{\psi}_m(x)=\boldsymbol{\psi}(x)|x|_{\mathbb{A}}^{-2m}$ and $\mathbf{f}_m(gu)=\boldsymbol{\psi}_m(u)\mathbf{f}_m(g)$ when $u\in \widehat{\Gamma}_0(N)$.

Let $f_0\in S_k(\Gamma_0(N),\psi)$ be a normalized Hecke newform of conductor $N$, nebentypus $\psi$ and let  $\mathbf{f}_0\in\mathcal{S}_k(N,\boldsymbol{\psi})$ be the corresponding adelic form with central character $\boldsymbol{\psi}$. Let $f$ be a suitable normalized Hecke eigen-cusp form that will be explicitly made out of $f_0$ such that its arithmetic lift $\mathbf{f}$ is inside the automorphic representation $\pi_{\mathbf{f}_0}$ generated by the unitarization $\mathbf{f}_0^u$.

Fix a choice of $z_1\in R$ such that $R=\mathbb{Z}+\mathbb{Z}z_1$ and define $\rho:M\hookrightarrow \mathrm{M}_2(\mathbb{Q})$ by a regular representation
\[\rho(\alpha)\begin{pmatrix} z_1 \\ 1 \end{pmatrix}=\begin{pmatrix} \alpha z_1 \\ \alpha \end{pmatrix}\; . \]
After tensoring with $\mathbb{A}$ we get $\rho:{M^{\times} \backslash M^{\times}_{\mathbb{A}}}  \hookrightarrow G(\mathbb{Q})\backslash G(\mathbb{A})$. 

We fix $g_1\in G(\mathbb{A})$ such that $g_{1,\infty}(\mathbf{i})=z_1$ and $\mathrm{det}(g_{1,\infty})=1$ while the finite places of $g_1$ will be specified shortly. We recall Lemma 3.7 of \cite{HidaNV}.
\begin{factoring}\label{factoring}
Let $\chi_m:{M^{\times} \backslash M^{\times}_{\mathbb{A}}}\to\mathbb{C}^\times$ be a Hecke character with $\chi_m|_{\mathbb{A}^\times} = \boldsymbol{\psi}_m^{-1}$ and $\chi_m(a_\infty)=a_\infty^{k+2m}$. Then $a\mapsto  \mathbf{f}_m(\rho(a)g_1) \chi_m(a)$ factors through $I_M^-= M^\times \left\backslash M^{\times}_{\mathbb{A}}\right/(\mathbb{A}^{(\infty)})^\times M^\times_\infty$ (the anticyclotomic idele class group).
\end{factoring}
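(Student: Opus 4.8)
The plan is to verify that the function $\phi(a):=\mathbf{f}_m(\rho(a)g_1)\chi_m(a)$ is invariant under left translation by $M^\times$ and under right translation by $(\mathbb{A}^{(\infty)})^\times$ and by $M^\times_\infty$; since the latter two subgroups commute inside the abelian group $M^\times_\mathbb{A}$, these invariances together are precisely the statement that $\phi$ descends to $M^\times\backslash M^\times_\mathbb{A}/(\mathbb{A}^{(\infty)})^\times M^\times_\infty = I_M^-$. I will use three facts about $\mathbf{f}_m$: it is left $G(\mathbb{Q})$-invariant; it has central character $\boldsymbol{\psi}_m$, so $\mathbf{f}_m(zg)=\boldsymbol{\psi}_m(z)\mathbf{f}_m(g)$ for $z\in Z(\mathbb{A})=\mathbb{A}^\times$; and it is of weight $k+2m$ at the archimedean place, in the sense that $\mathbf{f}_m(gh)=j(h_\infty,\mathbf{i})^{-(k+2m)}\mathbf{f}_m(g)$ for any $h\in G(\mathbb{A})$ that is trivial at the finite places and has $h_\infty\in\mathrm{GL}_2(\mathbb{R})$ of positive determinant fixing $\mathbf{i}$ (this last following from the formula $\mathbf{f}_m(g_\infty)=(\delta_k^mf)(g_\infty(\mathbf{i}))j(g_\infty,\mathbf{i})^{-k-2m}$, the cocycle identity for $j$, and the transformation of $\mathbf{f}_m$ under the central $\mathbb{R}_{>0}\subset G(\mathbb{R})$).

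The first two invariances are immediate. For $\alpha\in M^\times$ one has $\rho(\alpha)\in\mathrm{GL}_2(\mathbb{Q})=G(\mathbb{Q})$, so $\mathbf{f}_m(\rho(\alpha a)g_1)=\mathbf{f}_m(\rho(\alpha)\rho(a)g_1)=\mathbf{f}_m(\rho(a)g_1)$, and $\chi_m(\alpha a)=\chi_m(a)$ because $\chi_m$ is a Hecke character of $M$; hence $\phi(\alpha a)=\phi(a)$. For $z\in(\mathbb{A}^{(\infty)})^\times$, the regular representation $\rho$ carries $\mathbb{Q}\subset M$ into scalar matrices, so $\rho(z)=zI_2$ is central in $G(\mathbb{A})$; thus $\mathbf{f}_m(\rho(az)g_1)=\mathbf{f}_m\big((zI_2)\rho(a)g_1\big)=\boldsymbol{\psi}_m(z)\mathbf{f}_m(\rho(a)g_1)$, while $z\in\mathbb{A}^\times$ together with the hypothesis $\chi_m|_{\mathbb{A}^\times}=\boldsymbol{\psi}_m^{-1}$ gives $\chi_m(az)=\boldsymbol{\psi}_m(z)^{-1}\chi_m(a)$, so the two factors cancel and $\phi(az)=\phi(a)$.

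The one step requiring care is invariance under $u\in M^\times_\infty$, and this is where the normalizations enter. The element $\rho(u)$ is trivial at the finite places, and $\rho(u)_\infty\in\mathrm{GL}_2(\mathbb{R})$ has positive determinant $N_{M/\mathbb{Q}}(u)$ and fixes the point $z_1=g_{1,\infty}(\mathbf{i})$ of $\mathfrak{H}$; indeed, applying $\iota_\infty$ entrywise to $\rho(\alpha)\binom{z_1}{1}=\binom{\alpha z_1}{\alpha}$ and extending $\mathbb{R}$-linearly exhibits $\binom{z_1}{1}$ as an eigenvector of $\rho(u)_\infty$ with eigenvalue $\iota_\infty(u)$, so that $j(\rho(u)_\infty,z_1)=\iota_\infty(u)$ by reading off the bottom coordinate. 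Writing $\rho(au)g_1=(\rho(a)g_1)h$ with $h$ trivial at the finite places and $h_\infty=g_{1,\infty}^{-1}\rho(u)_\infty g_{1,\infty}$, the matrix $h_\infty$ fixes $\mathbf{i}$, and the cocycle identity for $j$ gives $j(h_\infty,\mathbf{i})=j(g_{1,\infty},\mathbf{i})^{-1}j(\rho(u)_\infty,z_1)j(g_{1,\infty},\mathbf{i})=\iota_\infty(u)$. Therefore $\mathbf{f}_m(\rho(au)g_1)=j(h_\infty,\mathbf{i})^{-(k+2m)}\mathbf{f}_m(\rho(a)g_1)=\iota_\infty(u)^{-(k+2m)}\mathbf{f}_m(\rho(a)g_1)$, whereas the prescribed $\infty$-type of $\chi_m$ gives $\chi_m(u)=\iota_\infty(u)^{k+2m}$; the archimedean contributions cancel and $\phi(au)=\phi(a)$.

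Assembling the three invariances, $\phi$ factors through $I_M^-$, which is the assertion. The only genuinely delicate point is the archimedean bookkeeping of the last paragraph: one must see that the automorphy factor $j(\rho(u)_\infty,z_1)$ comes out to $\iota_\infty(u)$ rather than $\overline{\iota_\infty(u)}$, which relies on $z_1$ being chosen with $\iota_\infty(z_1)\in\mathfrak{H}$ (so that $z_1=g_{1,\infty}(\mathbf{i})$ is consistent with $g_{1,\infty}\in\mathrm{SL}_2(\mathbb{R})$), and which makes transparent that the two conditions imposed on $\chi_m$ — namely $\chi_m|_{\mathbb{A}^\times}=\boldsymbol{\psi}_m^{-1}$ and $\chi_m(a_\infty)=a_\infty^{k+2m}$ — are exactly the ones that force the finite and archimedean contributions, respectively, to cancel; everything else is formal.
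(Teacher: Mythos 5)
Your proof is correct and is the standard direct verification: the paper itself gives no proof (it simply cites Lemma 3.7 of \cite{HidaNV}), and the argument there is precisely this decomposition into left $M^\times$-invariance (from left $G(\mathbb{Q})$-invariance of $\mathbf{f}_m$ and $\chi_m$ being a Hecke character), right $(\mathbb{A}^{(\infty)})^\times$-invariance (where $\rho$ restricted to $\mathbb{Q}$ is scalar, so the central character $\boldsymbol{\psi}_m$ of $\mathbf{f}_m$ cancels against the hypothesis $\chi_m|_{\mathbb{A}^\times}=\boldsymbol{\psi}_m^{-1}$), and right $M^\times_\infty$-invariance via the archimedean weight computation. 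You correctly identify the one nontrivial point, namely that $j(\rho(u)_\infty,z_1)=\iota_\infty(u)$ (not its conjugate), which comes from the defining relation $\rho(\alpha)\bigl(\begin{smallmatrix}z_1\\1\end{smallmatrix}\bigr)=\bigl(\begin{smallmatrix}\alpha z_1\\ \alpha\end{smallmatrix}\bigr)$ together with the normalization $\iota_\infty(z_1)\in\mathfrak{H}$, and which pairs up against the $\infty$-type $\chi_m(a_\infty)=a_\infty^{k+2m}$.

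One small point worth saying explicitly, even if only in a line: when you conjugate $h_\infty=g_{1,\infty}^{-1}\rho(u)_\infty g_{1,\infty}$ to a stabilizer of $\mathbf{i}$, that element is of the form $r\cdot k$ with $r=\sqrt{\det h_\infty}=|\iota_\infty(u)|>0$ central and $k\in\mathrm{SO}(2)$; the scalar $r$ contributes via the central character $\boldsymbol{\psi}_{m,\infty}(r)=r^{-(k+2m)}$ and the rotation contributes $j(k,\mathbf{i})^{-(k+2m)}$, and together these give $j(h_\infty,\mathbf{i})^{-(k+2m)}$. Your sketch alludes to this (``the transformation of $\mathbf{f}_m$ under the central $\mathbb{R}_{>0}$''), but making the factorization $h_\infty=rk$ explicit removes any doubt that the definition $\mathbf{f}_m(g)=(-4\pi)^{-m}|\det(g)|_{\mathbb{A}}^{-m}X^m_{g_\infty}\mathbf{f}(g)$, which carries an extra determinant factor absent from $\mathbf{f}$, still transforms with automorphy factor $j(\,\cdot\,,\mathbf{i})^{-(k+2m)}$ rather than something involving $\det(h_\infty)$ separately.
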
 

Let $I_M=M^\times \left\backslash M^{\times}_{\mathbb{A}}\right/ M^\times_\infty$ be the idele class group and choose a Haar measure $d^{\times}a$ on $M^{\times}_{\mathbb{A}}/ M^\times_\infty$  so that $\int_{\widehat{R}^\times} d^{\times}a=1$ as in Section 2.1 of \cite{HidaNV}. Taking a fundamental domain $\Phi\subset M^{\times}_{\mathbb{A}}/ M^\times_\infty$ of $I_M$ we get a measure on $I_M$ still denoted by $d^{\times}a$. 

Set
\[L_{\chi_m}(\mathbf{f}_m):=\int_{I_M}  \mathbf{f}_m(\rho(a)g_1) \chi_m(a)d^{\times}a  \]
 
In \cite{HidaNV} Hida related $L_{\chi_m}(\mathbf{f}_m)^2$ to the central critical L-value $L(\frac{1}{2},\hat{\pi}_{\mathbf{f}}\otimes \chi_m^-)$ for all arithmetic Hecke characters $\chi_m$ with $\chi_m|_{\mathbb{A}^\times} = \boldsymbol{\psi}_m^{-1}$ computing explicitly all local Euler-like factors without ambiguity. Moreover this is done under optimal assumptions on conductor of $\chi_m$, one of them being sufficient depth at non-split primes dividing conductor $N$ of $\pi_{\mathbf{f}}$.

We briefly explain Hida's recipe for a choice of $g_1$ at finite places (see Section 4 of \cite{HidaNV}) adjusted to our need.
Let $N=\prod_ll^{\nu(l)}$ be the prime factorization and let $N_{ns}=\prod_{l \, \text{non-split}}l^{\nu(l)}$ be its ``non-split'' part. 
Let $\mathfrak{C}$ denote the conductor of $\chi_m$ as above. In this section we assume that $\ell \nmid N$ and that $\chi_m$ is unramified outside $N$ and $\ell$.
Moreover, we assume that its conductor
\begin{itemize}
\item at non-split primes $l|N_{ns}$ is equal to $l^{\tilde{\nu}(l)}$, where $\tilde{\nu}(l)\geq \nu(l)$ is an arbitrary integer, and 
\item at $\ell$ is equal to $\ell^n$, where  $n\geq 1$ is an arbitrary integer.
\end{itemize}
Thus, $N_{ns}$-part of conductor is $c_{ns}= \prod_{l|N_{ns}}l^{\tilde{\nu}(l)}$, which we write $\mathfrak{C}_{N_{ns}}=c_{ns}$, and $\mathfrak{C}_{\ell}=\ell^n$.

Let $d(M)\in \mathbb{Z}$ ($d(M)<0$) be the discriminant of $M$ and set $d_0(M)=|d(M)|/4$ if $4|d(M)$ while $d_0(M)=|d(M)|$ otherwise. 
We divide the set of prime factors of $N(\mathfrak{C})d_0(M)N$ into disjoint union $A\sqcup C$ as follows. It suffices for our purpose to set $A=\{\ell\}$ when $\ell$ splits in $M$, and leave $A=\emptyset$ otherwise. Set $C=C_0\sqcup C_1$ where $C_1$ is the set of prime factors of $d_0(M)$ and $C_0=C_i\sqcup C_{sp}\sqcup C_r$ so that $C_i$ consists of primes inert in $M$, $C_r=\{2\}$ if $\mathrm{ord}_2(d(M))=2$ with $\nu(2)>2$ and $C_r=\emptyset$ otherwise. Then $C_{sp}$ consists of primes split in $M$ that are not already placed in $A$. Thus, there are two possibilities for a prime $\ell$: if it splits in $M$ it is placed in set $A$, otherwise it is placed in appropriate subset of $C$.

If $\ell$ splits in $M$ we choose a prime $\bar{\mathfrak{L}}$ over $\ell$ in $M$;  we set $\mathcal{A}=\{\bar{\mathfrak{L}}\}$, and we choose a prime $\bar{\mathfrak{l}}$ over each $l\in C_{sp}$, denoting the set of all these choices by $\mathcal{C}_{sp}=\{\bar{\mathfrak{l}}\mid l\in C_{sp}\}$. For all $l\in A\sqcup C_{sp}$ we can identify $M_l= M_{\bar{\mathfrak{l}}} \times M_{\mathfrak{l}} =\mathbb{Q}_l\times\mathbb{Q}_l$  and write $\iota_l(\alpha)=\alpha$ and $c\circ\iota_l(\alpha)=\bar{\alpha}$ where $\iota_l$ and $c\circ\iota_l$ are projections of $M_l$ to $M_{\mathfrak{l}}$ and $M_{\bar{\mathfrak{l}}}$, respectively. Note that these identifications follow reversed notation from the ones in \cite{HidaNV} due to a reason explained in Remark \ref{switch} at prime $p$ -- we proceed similarly at other split primes to keep our notation uniform. Then for  $l\in A\sqcup C_{sp}$ we specify $g_{1,\ell}$ and $g_{1,l}$  by first choosing $h_{1,\ell}\in G(\mathbb{Z}_\ell)$ and $h_{1,l}\in G(\mathbb{Z}_l)$ so that $h_{1,l}^{-1}\rho(\alpha)h_{1,l}=\bigl(\begin{smallmatrix} \bar{\alpha}  & 0 \\ 0 & \alpha \end{smallmatrix} \bigr)$; for example $h_{1,\ell}=h_{1,l}=\bigl(\begin{smallmatrix}  \bar{z}_1 & z_1 \\ 1 & 1 \end{smallmatrix} \bigr)$ will do, and then setting:
\[g_{1,\ell}= \begin{cases}
h_{1,\ell}\bigl(\begin{smallmatrix} \ell^n & 1 \\ 0 & 1 \end{smallmatrix} \bigr) & \text{if } \ell \in A \text{ is split},\\
 \bigl(\begin{smallmatrix} \ell^n & 0 \\ 0 & 1 \end{smallmatrix} \bigr) & \text{if } \ell \text{ is non-split} .\\
\end{cases}
\]
\[g_{1,l}=h_{1,l}\bigl(\begin{smallmatrix} l^{\tilde{\nu}(l)} & 0 \\ 0 & 1 \end{smallmatrix} \bigr)\text{ for } l\in C_{sp} \; .\]
Unless $l=2$ is inert in $M$, we set
\[g_{1,l}=\bigl(\begin{smallmatrix} l^{\tilde{\nu}(l)} & 0 \\ 0 & 1 \end{smallmatrix} \bigr)\; \text{for}\; l\in C_i\sqcup C_r \sqcup C_1 \backslash \{\ell\}\; .\]
If exceptionally, $2\in C$ and 2 is inert in $M$, the appropriate choice of $g_{1,l}$ for $l=2$ is given in Lemma 2.5 of \cite{HidaNV}. We chose $g_{1,\infty}\in G(\mathbb{R})$ so that $g_{1,\infty}(\mathbf{i})=z_1$ and $\mathrm{det}(g_{1,\infty})=1$. 
We set $g_{1,l}$ to be the identity matrix in $G(\mathbb{Z}_l)$ for $l\not\in A\sqcup C \sqcup\{\infty\}$ (see the proof of Proposition 2.2 in \cite{HidaNV}).

We recall Lemma 6.2 of \cite{Br2} where the above Lemma \ref{factoring} is appropriately improved under these new assumptions:
\begin{improv}\label{improv}
Let $\chi_m:{M^{\times} \backslash M^{\times}_{\mathbb{A}}}\to\mathbb{C}^\times$ be a Hecke character such that its conductor $\mathfrak{C}$ satisfies $\mathfrak{C}_{c_{ns}\ell}= c_{ns}\ell^n$ and $\chi_m|_{\mathbb{A}^\times} = \boldsymbol{\psi}_m^{-1}$ and $\chi_m(a_\infty)=a_\infty^{k+2m}$. Then $a\mapsto \mathbf{f}_m(\rho(a)g_1) \chi_m(a)$ factors through $\mathrm{Cl}^-_n:=\mathrm{Cl}_M^-(c_{ns}\ell^n)= M^\times \left\backslash M^{\times}_{\mathbb{A}}\right/(\mathbb{A}^{(\infty)})^\times\widehat{R}_{c_{ns}\ell^n}^\times M^\times_\infty$ .
\end{improv}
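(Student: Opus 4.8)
The plan is to bootstrap from Lemma~\ref{factoring}. That lemma already shows the function $\phi\colon a\mapsto \mathbf{f}_m(\rho(a)g_1)\chi_m(a)$ factors through the anticyclotomic idele class group $I_M^-=M^{\times}\backslash M^{\times}_{\mathbb{A}}/(\mathbb{A}^{(\infty)})^\times M^\times_\infty$, and $\mathrm{Cl}^-_n$ is precisely the further quotient of $I_M^-$ by the image of $\widehat{R}_{c_{ns}\ell^n}^\times=\prod_l R_{c_{ns}\ell^n,l}^\times$. Hence it suffices to prove $\phi(au)=\phi(a)$ for every $u\in\widehat{R}_{c_{ns}\ell^n}^\times$. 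The mechanism is the factorization $\rho(au)g_1=\rho(a)g_1\cdot v$ with $v:=g_1^{-1}\rho(u)g_1$: if (i) $v\in\widehat{\Gamma}_0(N)$ and (ii) $\boldsymbol{\psi}_m(v)\chi_m(u)=1$, then the transformation rule $\mathbf{f}_m(\,\cdot\,v)=\boldsymbol{\psi}_m(v)\mathbf{f}_m(\,\cdot\,)$ forces $\phi(au)=\boldsymbol{\psi}_m(v)\chi_m(u)\,\phi(a)=\phi(a)$. Both (i) and (ii) break up over the rational primes, since $v=(v_l)_l$ with $v_l=g_{1,l}^{-1}\rho(u_l)g_{1,l}$, so I would verify them one place at a time.

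At a prime $l\nmid N\ell$ there is nothing to check: $g_{1,l}=1$ and $R_{c_{ns}\ell^n,l}^\times=R_l^\times$, so $v_l=\rho(u_l)\in\mathrm{GL}_2(\mathbb{Z}_l)=\widehat{\Gamma}_0(N)_l$ (the regular representation $\rho$ of $R=\mathbb{Z}[z_1]$ is integral, whence $\rho(R_l^\times)\subseteq\mathrm{GL}_2(\mathbb{Z}_l)$), while the $l$-components of $\boldsymbol{\psi}_m$ and of the unramified $\chi_m$ are trivial on units. All the content sits at the ramified primes. Take first a non-split $l\mid c_{ns}$: here $R_{c_{ns}\ell^n,l}=\mathbb{Z}_l+l^{\tilde\nu(l)}\mathbb{Z}_l z_1$ and $g_{1,l}=\bigl(\begin{smallmatrix}l^{\tilde\nu(l)}&0\\0&1\end{smallmatrix}\bigr)$ (the exceptional $l=2$ with $\mathrm{ord}_2(d(M))=2$, $\nu(2)>2$ being handled with the $g_{1,2}$ of Lemma~2.5 of \cite{HidaNV}). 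Writing $u_l=s+l^{\tilde\nu(l)}rz_1$ with $s,r\in\mathbb{Z}_l$, one has $s\in\mathbb{Z}_l^\times$ (the reduced norm of $u_l$ is a unit), and a direct computation with the matrix of $\rho(u_l)$ shows $v_l$ is integral with lower-left entry divisible by $l^{2\tilde\nu(l)}$, a fortiori by $l^{\nu(l)}$ --- this is exactly where the depth hypothesis $\tilde\nu(l)\ge\nu(l)$ enters --- and with lower-right entry $s$, so $v_l\in\widehat{\Gamma}_0(N)_l$ and $\boldsymbol{\psi}_m(v_l)=\boldsymbol{\psi}_{m,l}(s)$. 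For (ii) at $l$, write $u_l=s\bigl(1+l^{\tilde\nu(l)}(r/s)z_1\bigr)\in\mathbb{Z}_l^\times\,(1+l^{\tilde\nu(l)}R_l)$; since the conductor of $\chi_m$ above $l$ divides $l^{\tilde\nu(l)}R_l$, we get $\chi_{m,l}(u_l)=\chi_{m,l}(s)=\boldsymbol{\psi}_{m,l}(s)^{-1}$ by $\chi_m|_{\mathbb{A}^\times}=\boldsymbol{\psi}_m^{-1}$, and the two factors cancel. The prime $\ell$ is entirely parallel, with $l^{\tilde\nu(l)}$ replaced by $\ell^n$: the integer $n\ge1$ plays the role of the depth, $\boldsymbol{\psi}_{m,\ell}$ is unramified (as $\ell\nmid N$), and $\chi_{m,\ell}$ is trivial on $R_{c_{ns}\ell^n,\ell}^\times$ because its conductor at $\ell$ is $\ell^n$ while $\chi_m|_{\mathbb{Z}_\ell^\times}=1$. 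The remaining bad primes --- split $l\mid N$, and primes dividing $d(M)$ but not $N\ell$ --- go the same way, the split case additionally using that the auxiliary matrix $h_{1,l}$ diagonalizes $\rho$ and that $\chi_m$ is unramified at one of the two primes above $l$ in the recipe.

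Taking the product of the local identities gives $\boldsymbol{\psi}_m(v)\chi_m(u)=1$, hence $\phi(au)=\phi(a)$, which is the claim. I expect the main obstacle to be precisely this bookkeeping at the ramified primes: one must arrange simultaneously that the conjugate $v_l$ lands in $\widehat{\Gamma}_0(N)_l$ --- which forces the conductor exponent of $\chi_m$ at $l\mid N$ to be at least $\nu(l)$, i.e.\ the depth hypothesis --- and that the residual nebentypus value $\boldsymbol{\psi}_{m,l}(s)$ is cancelled on the nose by $\chi_{m,l}(u_l)$, which is where both the exact conductor normalization $\mathfrak{C}_{c_{ns}\ell}=c_{ns}\ell^n$ and the compatibility $\chi_m|_{\mathbb{A}^\times}=\boldsymbol{\psi}_m^{-1}$ get used. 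The ramified prime $2$ is the one genuinely special case, whose matrix computation is deferred to Lemma~2.5 of \cite{HidaNV}.
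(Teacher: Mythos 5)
The paper never proves this lemma; it is recalled verbatim from Lemma~6.2 of \cite{Br2}, so there is no in-paper proof to compare against. That said, your bootstrap from Lemma~\ref{factoring} is the natural and correct route: since $\mathrm{Cl}^-_n$ is exactly the further quotient of $I_M^-$ by the image of $\widehat{R}_{c_{ns}\ell^n}^\times$, and $\rho(au)g_1=\rho(a)g_1\cdot(g_1^{-1}\rho(u)g_1)$, everything reduces to checking place by place that $v_l=g_{1,l}^{-1}\rho(u_l)g_{1,l}$ lies in $\widehat{\Gamma}_0(N)_l$ and that $\boldsymbol{\psi}_{m,l}(v_l)\chi_{m,l}(u_l)=1$; the mechanism is the transformation law $\mathbf{f}_m(\,\cdot\,v)=\boldsymbol{\psi}_m(v)\mathbf{f}_m(\,\cdot\,)$. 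Your local computations at the non-split primes dividing $c_{ns}$ and at $\ell$ non-split (where $g_{1,\ell}=\bigl(\begin{smallmatrix}\ell^n&0\\0&1\end{smallmatrix}\bigr)$) are right, and the treatment of split $l\mid N$ via the diagonalizing $h_{1,l}$ and the one-sidedly ramified recipe for $\chi_m$ is the correct mechanism.

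Two points deserve repair, though neither is fatal. First, you write that the prime $\ell$ is ``entirely parallel'' and then file split primes under ``split $l\mid N$,'' but split $\ell$ falls into neither bucket: when $\ell\in A$ splits, $g_{1,\ell}=h_{1,\ell}\bigl(\begin{smallmatrix}\ell^n&1\\0&1\end{smallmatrix}\bigr)$ carries an off-diagonal entry, so $v_\ell$ acquires an upper-right term $\ell^{-n}(\bar{u}_\ell-u_\ell)$. That this is $\ell$-integral uses precisely the structure $u_\ell\in(\mathbb{Z}_\ell+\ell^nR_\ell)^\times$, i.e.\ that the two components of $u_\ell$ under $M_\ell\cong\mathbb{Q}_\ell\times\mathbb{Q}_\ell$ agree modulo $\ell^n$; the computation then closes, but it is a genuinely separate check from the non-split diagonal one and should be done explicitly. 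Second, your parenthetical ``this is exactly where the depth hypothesis $\tilde\nu(l)\ge\nu(l)$ enters'' overclaims: the lower-left entry of $v_l$ is $l^{2\tilde\nu(l)}r$, so only $2\tilde\nu(l)\ge\nu(l)$ is used for $v_l\in\widehat{\Gamma}_0(N)_l$. The full depth $\tilde\nu(l)\ge\nu(l)$ is a hypothesis needed later for the Waldspurger formula of Theorem~4.1 of \cite{HidaNV}, not for this factorization lemma per se.
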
 
Thus, if we choose a complete set of representatives $\mathfrak{a}\in \mathrm{Cl}^-_n$ and corresponding $a\in M^{\times}_{\mathbb{A}}$ such that $\widehat{\mathfrak{a}}=a\widehat{R}_{c_{ns}\ell^n}$, we immediately conclude
\begin{equation} \label{int-sum}
L_{\chi_m}(\mathbf{f}_m) =\frac{\mathop{vol}(I_M)}{|\mathrm{Cl}^-_n|}\sum_{\mathfrak{a}\in \mathrm{Cl}^-_n}\chi_m(a)\mathbf{f}_m(\rho(a)g_1)
= \frac{\varphi_{\mathbb{Q}}(c_{ns}\ell^n)}{2\varphi_M(c_{ns}\ell^n)}\sum_{\mathfrak{a}\in \mathrm{Cl}^-_n }\chi_m(a)\mathbf{f}_m(\rho(a)g_1) 
\end{equation}
Here $\mathop{vol}(I_M)=\int_{I_M} d^{\times}a=h(M)/|R^\times|$ where $h(M)$ is class number of $M$, because under chosen normalization of Haar measure we have $\int_{\widehat{R}^\times/R^\times} d^{\times}a=1/|R^\times|$. Additionally, 
\[|\mathrm{Cl}^-_n|=\frac{2h(M)\varphi_M(c_{ns}\ell^n)}{|R^\times|\varphi_\mathbb{Q}(c_{ns}\ell^n)}\]
can be read off from the exact sequence
\[ 0\longrightarrow \mathrm{Cl}_{\mathbb{Q}}(c_{ns}\ell^n)\longrightarrow \mathrm{Cl}_M(c_{ns}\ell^n)\longrightarrow \mathrm{Cl}^-_n\longrightarrow 0\]
where $\mathrm{Cl}_M(c_{ns}\ell^n)$ and $\mathrm{Cl}_{\mathbb{Q}}(c_{ns}\ell^n)$ denote ray class groups modulo $c_{ns}\ell^n$ of $M$ and $\mathbb{Q}$, respectively.

In the Section 6 of \cite{Br2}, by examining how the local components $g_{1,l}$ affect the conductor of a lattice associated to a CM point on $Sh$ in the sense of Deligne's treatment, we gave a detailed verification that if $[z_1,1]\in Sh(\mathbb{C})$ corresponds to $x(R)=\xw{R}{\mathcal{W}}$ then $[z_1,\rho(a)g_1]$, for $a$ as above, corresponds to $x(\mathfrak{a}')=\xw{\mathfrak{a}'}{\mathcal{W}}$ on $Sh$, where $\mathfrak{a}'$ is certain proper $R_{c_{ns}\ell^n}$-ideal not necessarily representing the same class in $\mathrm{Cl}^-_n$ as $\mathfrak{a}$. However, as we are taking sum and free to do so over any choice of representatives of $\mathrm{Cl}^-_n$, we have
{\allowdisplaybreaks
\begin{align} \nonumber
\left(\frac{\varphi_{\mathbb{Q}}(c_{ns}\ell^n)}{2\varphi_M(c_{ns}\ell^n)}\right)^{-1}L_{\chi_m}(\mathbf{f}_m)
&\stackrel{(\ref{int-sum})}{=} \sum_{\mathfrak{a}\in \mathrm{Cl}^-_n}\chi_m(a^{-1})\mathbf{f}_m(\rho(a^{-1})g_1) \\
&\stackrel{(\ref{der})}{=} |\mathrm{det}(g_1^{(\infty)})|_\mathbb{A}^{-m} \sum_{\mathfrak{a}\in \mathrm{Cl}^-_n}\chi_m(a^{-1})|\mathrm{det}(\rho(a^{-1}))|_{\mathbb{A}}^{-m} \delta_k^m\mathbf{f}(\rho(a^{-1})g_1) \nonumber \\
&=j(g_{1,\infty},\mathbf{i})^{-k-2m}|\mathrm{det}(g_1^{(\infty)})|_\mathbb{A}^{-m}\sum_{\mathfrak{a}\in \mathrm{Cl}^-_n}\chi_m(a^{-1})\left(|a^{-1}|_{M_\mathbb{A}}^{-m} (\delta_k^mf)(x(\mathfrak{a}),\omega_\infty(\mathfrak{a}))\right) \nonumber \\
&=j(g_{1,\infty},\mathbf{i})^{-k-2m}|\mathrm{det}(g_1^{(\infty)})|_\mathbb{A}^{-m}\sum_{\mathfrak{a}\in \mathrm{Cl}^-_n}\left(\chi_m(a^{-1})|a|_{M_\mathbb{A}}^m\right) (\delta_k^mf)(x(\mathfrak{a}),\omega_\infty(\mathfrak{a})) \nonumber
\end{align} }
whence by the Katz--Shimura rationality result (\ref{K-S}) we conclude
\begin{equation}\label{torusint} 
\tilde{C}\frac{L_{\chi_m}(\mathbf{f}_m)}{\Omega_\infty^{k+2m}}= \sum_{\mathfrak{a}\in \mathrm{Cl}^-_n}\left(\chi_m(a^{-1})|a|_{M_\mathbb{A}}^m\right) (d^mf)(x(\mathfrak{a}),\omega(\mathfrak{a})) \in \mathcal{W} 
\end{equation}
where
\begin{equation}\label{fudge}
\tilde{C}:=j(g_{1,\infty},\mathbf{i})^{k+2m}|\mathrm{det}(g_1^{(\infty)})|_\mathbb{A}^m\left(\frac{\varphi_{\mathbb{Q}}(c_{ns}\ell^n)}{2\varphi_M(c_{ns}\ell^n)}\right)^{-1}\; .
\end{equation}

We are now ready to invoke the main Theorem 4.1 of \cite{HidaNV} that relates square of $L_{\chi_m}(\mathbf{f}_m)$ to the central critical value $L(1/2,\hat{\pi}_{\mathbf{f}}\otimes \chi^-)$. For the starting Hecke newform $f_0\in S_k(\Gamma_0(N),\psi)$, let $f_0|T(l)=a(l,f_0)f_0$ for primes $l$. As usual, we define Satake parameters $\alpha_l,\beta_l\in\mathbb{C}$ by the equations $\alpha_l+\beta_l=a(l,f_0)/l^{(k-1)/2}$ and $\alpha_l\beta_l=\psi(l)$ when $l\nmid N$, while we set $\alpha_l=a(l,f_0)/l^{(k-1)/2}$ and $\beta_l=0$ when $l|N$. Then the primitive L-function is the product 
\[ L(s,\hat{\pi}_{\mathbf{f}}\otimes \chi^-)= \prod_l E_l(s)\]
of Euler factors given by
\[ E_l(s)=
\begin{cases}
\left[ (1-\frac{\alpha_l\chi_m^-(\mathfrak{l})}{l^s})(1-\frac{\alpha_l\chi_m^-(\bar{\mathfrak{l}})}{l^s})(1-\frac{\beta_l\chi_m^-(\mathfrak{l})}{l^s})(1-\frac{\alpha_l\chi_m^-(\bar{\mathfrak{l}})}{l^s})\right]^{-1} & \text{if } l=\mathfrak{l}\bar{\mathfrak{l}}\text{ splits in } M,\\
\left[ (1-\frac{\alpha_l^{2/e}\chi_m^-(\mathfrak{l})}{l^{2s/e}}) (1-\frac{\beta_l^{2/e}\chi_m^-(\mathfrak{l})}{l^{2s/e}})\right]^{-1} & \text{if } l=\mathfrak{l}^e \text{ is non-split}.
\end{cases} \]
where $\chi_m^-(\mathfrak{l})=0$ if $\mathfrak{l}$ divides the conductor $\bar{\mathfrak{C}}$ of $\chi_m^-$. 

Now we explain how starting from  $f_0$, we choose a suitable $f$ such that its arithmetic lift $\mathbf{f}$ is inside the automorphic representation $\pi_{\mathbf{f}_0}$ generated by the unitarization $\mathbf{f}_0^u$. The form $f$ is a normalized Hecke eigen-cusp form in $\pi_{\mathbf{f}_0}$ with $f|T(n)=a(n,f)f$ and $a(l,f)=a(l,f_0)$ for all primes $l$ outside $\mathrm{lcm}(N,\ell,d_0(M))$. If $\mathrm{lcm}(N,\ell,d_0(M))=N$, we set $f:=f_0$, otherwise it is possible to choose $f$ inside $\pi_{\mathbf{f}_0}$ such that for primes $l|\mathrm{lcm}(N,\ell,d_0(M))$ we have
\[ a(l,f)=
\begin{cases}
a(l,f_0) & \text{if } l|N,\\
\alpha_ll^{(k-1)/2} & \text{if } l\nmid N.\\
\end{cases}
\]
Note that $\pi_{\mathbf{f}_0}=\pi_{\mathbf{f}}$, and we write $\pi_{\mathbf{f}}$ from now.
The condition (F) of the main Theorem 4.1 of \cite{HidaNV} requires special care. Our character $\chi_m$ is of the form $\chi_m:=\lambda  \cdot \chi \cdot |\cdot|_{M_\mathbb{A}}^m$, where $\lambda$ is a fixed choice of Hecke character of $\infty$-type $(k+m,-m)$ such that $\lambda|_{\mathbb{A}^\times} = \boldsymbol{\psi}^{-1}$ and $\chi$ is any finite order anticyclotomic character of conductor $\ell^n$ -- so it factors through $\mathrm{Cl}_n^-$. We give the following
\begin{recipe} 
We choose $\lambda=\lambda_0\chi_0$, where $\lambda_0$ has infinity type $(k,0)$ and provides criticality condition $\lambda_0|_{\mathbb{A}^\times} = \boldsymbol{\psi}^{-1}$ and $\chi_0$ is arbitrary fixed choice of anticyclotomic Hecke character of $\infty$-type $(m,-m)$ and conductor $c_{ns}$. Note that anticyclotomic Hecke characters are trivial on rational adeles, hence $\lambda$ inherits criticality condition from $\lambda_0$. Concerning the choice of $\lambda_0$, the criticality assumption $\lambda_0|_{\mathbb{A}^\times}=\boldsymbol{\psi}^{-1}$ forces the conductor of $\lambda_0$ to depend on the conductor $c(\psi)$ of nebentypus $\psi$. Regardless of the choice of $\lambda_0$ we have:
\begin{itemize}
\item at primes $l\nmid c(\psi)$ Hecke character $\lambda_0$ is unramified and
\item at non-split primes $l|c(\psi)$ the conductor of $\lambda_{0,l}$ divides $l^{\mathrm{ord}_l(c(\psi))}$ hence does not exceed $l^{\tilde{\nu}(l)}$.
\end{itemize}
However, if a split prime $l|c(\psi)$ we make a subtle choice as follows. Note that $M_l^\times= M_{\bar{\mathfrak{l}}}^\times \times M_{\mathfrak{l}}^\times  =\mathbb{Q}_l^\times\times\mathbb{Q}_l^\times$ and consequently $\lambda_{0,l} = \lambda_{0,\bar{\mathfrak{l}}}\lambda_{0,\mathfrak{l}}$. Then
\begin{itemize}
\item at split primes $l|c(\psi)$ we choose $\lambda_{0,l}$ so that its conductor is supported at $\mathfrak{l}$, that is, we choose $\lambda_{0,\bar{\mathfrak{l}}}$  to be unramified and $\lambda_{0,\mathfrak{l}}$ to have conductor $\mathfrak{l}^{\mathrm{ord}_l(c(\psi))}$. 
\end{itemize}
\end{recipe}
Then $\chi_m$ has conductor $\mathfrak{C}=c_{ns}\ell^n\prod_{l\in C_{sp}}\mathfrak{l}^{\mathrm{ord}_l(c(\psi))}$
and aforementioned condition (F) is satisfied so by Theorem 4.1 of \cite{HidaNV} we have
\begin{equation}\label{wald} 
L_{\chi_m}(\mathbf{f}_m)^2=  c\frac{\Gamma(k+m)\Gamma(m+1)}{(2\pi\mathbf{i})^{k+2m+1}}E(1/2)E'(m)L^{(Nd)}(\frac{1}{2},\hat{\pi}_{\mathbf{f}}\otimes (\lambda\chi)^-)\,.
\end{equation}
In the following, the notation $[\cdot]^*$ means that the factor inside the brackets appears only if $\ell$ splits in $M$ and consequently $A=\{\ell\}$ is non-empty. Let $N_{ns}=\prod_{l \, \text{split}}l^{\nu(l)}$ be the ``split'' part of prime factorization $N=\prod_ll^{\nu(l)}$. The constant $c=c_1\cdot G\cdot v$ with 
\[c_1=[\mathrm{exp}(-\frac{2\pi\mathbf{i}}{\ell^n})]^*\sqrt{d(M)}(2\mathbf{i})^{-k-2m} (c_{ns}N_{sp}\ell^n)^{k+2m}\] 
is given by
\[v=\frac{\prod_{l\in C_{sp}}l^{\tilde{\nu}(l)}}{[\ell^n(1-\frac{1}{\ell})^3]^*\prod_{l\in C_i}l^{2\tilde{\nu}(l)}(1+\frac{1}{l})^2(1-\frac{1}{l})
\prod_{l\in C_r\cup C_1, \tilde{\nu}(l)>0}(1-\frac{1}{l})}\; ,\]
\begin{equation}\label{gauss}
G=[\chi_{m,\ell}^-(\ell^s)^{-1} G(\chi_{m,\bar{\mathfrak{L}}}^-)]^*\prod_{l\in C}\chi_{m,l}^-(l^{\nu(l)})^{-1}\prod_{l\in C_{sp},l|c(\psi)}l^{((k/2)+m)(\nu(l)-\mathrm{ord}_l(c(\psi)))}\chi_{m,\bar{\mathfrak{l}}}^-(l^{\nu(l)-\mathrm{ord}_l(c(\psi))})G(\chi_{m,\bar{\mathfrak{l}}}^-) \;,
\end{equation}
where $\chi_{m,\ell}^-=\chi_m^-|_{\mathbb{Q}_\ell^\times}$, $\chi_{m,\bar{\mathfrak{l}}}^-=\chi_m^-|_{M_{\bar{\mathfrak{l}}}^\times}$, $G(\chi_{m,\bar{\mathfrak{L}}}^-)$ and $G(\chi_{m,\bar{\mathfrak{l}}}^-)$ is Gauss sum of $\chi_{m,\bar{\mathfrak{L}}}^-=\chi_m^-|_{M_{\bar{\mathfrak{L}}}^\times}$ and $\chi_{m,\bar{\mathfrak{l}}}^-=\chi_m^-|_{M_{\bar{\mathfrak{l}}}^\times}$, respectively, 
and the modification Euler factors are given by
\begin{equation} \label{euler1}
E(1/2)=\prod_{\mathfrak{l}|l\in C_{sp}}\left(1-\frac{\chi_m^-(\mathfrak{l})\alpha_l}{N(\mathfrak{l})^{1/2}}\right)^{-1}\prod_{\mathfrak{l}|d(M)}\left(1-\frac{\chi_m^-(\mathfrak{l})\alpha_l}{N(\mathfrak{l})^{1/2}}\right)^{-1}\left(1-\frac{\chi_m^-(\mathfrak{l})\beta_l}{N(\mathfrak{l})^{1/2}}\right)^{-1}\, , 
\end{equation}
\begin{equation}\label{euler2}
E'(m)=\frac{
\prod_{l\in C_{sp},l|c(\psi)} \frac{\alpha_l^{\nu(l)-\mathrm{ord}_l(c(\psi))}}{l^{(\nu(l)-\mathrm{ord}_l(c(\psi)))(m+(k-1)/2)}}
\prod_{l\in C_{sp},l\nmid c(\psi)}\alpha_l^{\nu(l)}l^{\nu(l)/2}\chi_m^-(\bar{\mathfrak{l}}^{\nu(l)})\left( 1-\frac{1}{\alpha_ll^{1/2}\chi_m^-(\bar{\mathfrak{l}})}\right)}
{\prod_{l\in C_{sp}}\alpha_l^{\nu(l)}l^{\nu(l)/2}\chi_m^-(\mathfrak{l}^{\nu(l)})\left(1-\frac{1}{\alpha_ll^{1/2}\chi_m^-(\mathfrak{l})}\right)}
\, .
\end{equation}
Then (\ref{torusint}) and (\ref{wald}) suggest that we normalize L-value as follows
\begin{equation}
L^{\mathrm{alg}}(\frac{1}{2},\hat{\pi}_{\mathbf{f}}\otimes (\lambda\chi)^-)):= G\frac{\Gamma(k+m)\Gamma(m+1)}{\pi^{k+2m+1}}E(1/2)E'(m)\frac{L^{(Nd)}(\frac{1}{2},\hat{\pi}_{\mathbf{f}}\otimes (\lambda\chi)^-)}{{\Omega_\infty^{2(k+2m)}}}\,.
\end{equation}
As we are going to study $L^{\mathrm{alg}}(\frac{1}{2},\hat{\pi}_{\mathbf{f}}\otimes (\lambda\chi)^-))$ modulo $p$ by utilizing (\ref{torusint}), we are clearly going to exclude $p$ from the primes that divide fudge factors $c_1$ and $v$ above, and $\tilde{C}$ given in (\ref{fudge}). More concretely, let $\mathcal{S}(N,\ell)$ be the finite set of prime divisors of elements of
\begin{equation}\label{fudgeset}
 \{N, \ell-1\} \cup \{l-1\,:\text{ prime }l\mid N \text{ is ramified in }M\} \cup \{l-1,l+1\,:\text{ prime }l\mid N \text{ is inert in } M\}\,.
\end{equation}

\section{Zariski density of CM points on Shimura curves}\label{sec:zariski}
Each integral $R$-ideal $\mathfrak{A}$ prime to $c_{ns}\ell$ induces a unique proper $R_{c_{ns}\ell^n}$-ideal $\mathfrak{a}_n=\mathfrak{A}\cap R_{c_{ns}\ell^n}$ and after taking the inverse limit $\mathfrak{a}=\underleftarrow{\lim}_n\mathfrak{a}_n$ we obtain an element of $\mathrm{Cl}^-_\infty$. We are able to embed the group $\mathrm{Cl}^{\mathrm{alg}}$ of fractional ideals of $R$ prime to $c_{ns}\ell $ as a subgroup of $\mathrm{Cl}^-_\infty$ by extending this procedure to fractional ideals in obvious way.

In Section \ref{sec:heckecmpts}, starting from a pair $(X(R), \eta^{(p)}(R))$, we associated to a certain representative of each proper ideal class $\mathfrak{a}\in \mathrm{Cl}^-_n$ a $\mathcal{W}$-point $x(\mathfrak{a})=(X(\mathfrak{a}),\eta^{(p)}(\mathfrak{a}))$ on $Sh^{(p)}$. Let $\mathfrak{a}$ be a proper $R_{c_{ns}\ell^n}$-ideal. Note that each $\alpha\in R_{(p)}^\times$ induces an isomorphism
\[(X(\mathfrak{a}),\eta^{(p)}(\mathfrak{a}))\cong (X(\alpha\mathfrak{a}),\eta^{(p)}(\alpha \mathfrak{a}))\,.\]
Thus the isomorphism class of pair $x(\mathfrak{a})$ depends only on the proper ideal class of $\mathfrak{a}$ in $\mathrm{Cl}^-_n$. We define $\rho:R^\times_{(p)}\to G(\mathbb{A}^{(p\infty)})$ via $\alpha \eta^{(p)}(R) = \eta(R) \circ \rho(\alpha)$. Then $x=(X(R), \eta^{(p)}(R)) \in Sh^{(p)}$ is fixed by $\rho(\alpha)$.

Let $V_{{/\bar{\mathbb{F}}_p}}$ be the irreducible component of $Sh^{(p)}_{{/\bar{\mathbb{F}}_p}}$ containing a point $x(\mathfrak{a})_{{/\bar{\mathbb{F}}_p}}$. In other words, $V_{{/\bar{\mathbb{F}}_p}}$ is the special fiber of geometrically connected component $V^{(p)}_{/\mathcal{W}}$ of $Sh^{(p)}$ introduced in the Section \ref{sec:dbf}. Given any infinite sequence of integers $\underline{n}=\{n_j\}_1^\infty$ we set
\[\Xi=\Xi_{\underline{n}}:=\{x(\mathfrak{a})\in V(\bar{\mathbb{F}}_p)\mid\mathfrak{a}\in\mathrm{Ker}(\pi_{n_{j},n_1}),\; j=1,2,\ldots\}\,.\]
Elliptic curves sitting over points in $\Xi$ are non-isomorphic by a result of Deuring (\cite{Deu}) which assures that the isomorphism class of an elliptic curve over $\bar{\mathbb{F}}_p$ is determined by the action of relative Frobenius map on its $\ell$-adic Tate module. Thus, the set $\Xi$ is infinite  and we  record the following obvious
\begin{fact} Since $\mathrm{dim}V_{{/\bar{\mathbb{F}}_p}}=1$, the set $\Xi$ is Zariski dense in $V(\bar{\mathbb{F}}_p)$.
\end{fact}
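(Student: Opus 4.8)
The plan is to reduce the statement to the elementary fact that a proper Zariski-closed subset of an irreducible curve is a finite set of closed points, combined with the infinitude of $\Xi$ already recorded above. First I would observe that $V_{/\bar{\mathbb{F}}_p}$ is an integral scheme of pure dimension $1$: it is the special fiber of the smooth, geometrically irreducible $\mathcal{W}$-scheme $V^{(p)}_{/\mathcal{W}}$ constructed in Section \ref{sec:dbf}, hence it is reduced and irreducible, and its dimension equals $\dim V^{(p)}=1$. Consequently, any Zariski-closed subset of $V_{/\bar{\mathbb{F}}_p}$ is either all of $V_{/\bar{\mathbb{F}}_p}$ or has dimension $\leq 0$; in the latter case each of its finitely many irreducible components is a single closed point, so it is a finite set of closed points.

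Next I would use that the points $x(\mathfrak{a})\in V(\bar{\mathbb{F}}_p)$ with $\mathfrak{a}\in\mathrm{Ker}(\pi_{n_j,n_1})$, $j=1,2,\ldots$, are pairwise distinct: this is precisely the Deuring argument recalled just above the statement, since the action of relative Frobenius on the $\ell$-adic Tate modules of the elliptic curves $X(\mathfrak{a})$ distinguishes their isomorphism classes as $\mathfrak{a}$ runs over these kernels, and hence $\#\Xi=\infty$. Moreover each $x(\mathfrak{a})$, being a $\bar{\mathbb{F}}_p$-rational point, is a closed point of $V_{/\bar{\mathbb{F}}_p}$, so $\Xi$ is an infinite set of closed points.

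Finally I would conclude by contradiction. Let $\overline{\Xi}$ be the Zariski closure of $\Xi$ in $V_{/\bar{\mathbb{F}}_p}$. If $\overline{\Xi}\neq V_{/\bar{\mathbb{F}}_p}$, then by the dimension dichotomy above $\overline{\Xi}$ is a finite set of closed points, contradicting $\#\Xi=\infty$. Hence $\overline{\Xi}=V_{/\bar{\mathbb{F}}_p}$, i.e. $\Xi$ is Zariski dense in $V(\bar{\mathbb{F}}_p)$, as claimed. There is no real obstacle in this argument: the only non-formal ingredient is the infinitude of $\Xi$, which has already been established, so the Fact functions simply as a bookkeeping statement packaging the geometric input needed for the density arguments later in Section \ref{sec:zariski}.
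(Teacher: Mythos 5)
Your argument is exactly the paper's: the infinitude of $\Xi$ (via Deuring's theorem on the action of Frobenius on $\ell$-adic Tate modules) combined with the irreducibility and one-dimensionality of $V_{/\bar{\mathbb{F}}_p}$ forces Zariski density, since any proper closed subset of an irreducible curve is finite. The paper simply labels this reasoning ``obvious'' after establishing $\#\Xi=\infty$ in the preceding sentence; you have spelled out the same dimension dichotomy explicitly.
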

For a finite subset $\mathscr{Q}\subset \mathrm{Cl}^-_\infty$ we set
\[\Xi^{\mathscr{Q}}:=\{\left(x(\delta(\mathfrak{a}))\right)_{\delta\in\mathscr{Q}}\in V^{\mathscr{Q}}\mid x(\mathfrak{a})\in\Xi \} \]
where $V^{\mathscr{Q}}=\prod_{\delta\in\mathscr{Q}}V$ and $\delta(\mathfrak{a})$ is a proper ideal class representative of the product of $\pi_{\infty,n_j}(\delta)$ and $\mathfrak{a}$ in $\mathrm{Cl}^-_{n_j}$.
We recall Proposition 2.8 of \cite{HidaDwork} since it is of instrumental use for us.
\begin{density}\label{density}
If $\mathscr{Q}$ is finite and injects into $\mathrm{Cl}^-_\infty/\mathrm{Cl}^{\mathrm{alg}}$, the subset $\Xi^{\mathscr{Q}}$ is Zariski dense in  $V^{\mathscr{Q}}_{/\bar{\mathbb{F}}_p}$.
\end{density}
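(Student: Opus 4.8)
The plan is to show that the Zariski closure $Z$ of $\Xi^{\mathscr{Q}}$ in $V^{\mathscr{Q}}_{/\bar{\mathbb{F}}_p}$ is all of $V^{\mathscr{Q}}$; since $V^{(p)}$ is the inverse limit of the $V_K$, it suffices to prove that the image of $\Xi^{\mathscr{Q}}$ is Zariski dense in $V_K^{\mathscr{Q}}$ for every open compact $K$ maximal at $p$, so one works on honest quasi-projective varieties. Two inputs are soft. First, for each $\delta\in\mathscr{Q}$ the $\delta$-projection $\mathrm{pr}_\delta(\Xi^{\mathscr{Q}})=\{x(\delta(\mathfrak{a}))\mid x(\mathfrak{a})\in\Xi\}$ is the image of $\Xi$ under the Hecke correspondence attached to $\delta$ — in Deligne's moduli interpretation, the action of the element of $G(\mathbb{A}^{(\infty)})$ produced from a representing idele of $\delta$ via the regular representation $\rho$ of Section \ref{sec:hewf} — a finite-to-finite correspondence with dominant projections; since $\Xi$ is Zariski dense by the Fact, its image is dense, so $\mathrm{pr}_\delta(Z)=V$. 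Second, because $\dim V=1$ and $Z$ surjects onto each of the $\#\mathscr{Q}$ factors, if $Z\neq V^{\mathscr{Q}}$ then, passing to a component of $Z$ if necessary and projecting away one coordinate, the remaining coordinate is algebraically constrained by the others; the task is to rule this out.

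The decisive step is to force $Z$ to be stable under a large $\ell$-adic group action and then invoke rigidity. Here I would exploit the Hecke relations among the CM points: by (\ref{proj}) the points of $\Xi$ lying over a fixed point of level $n_1$ are exactly the orbit of one CM point under the upper-triangular unipotent $\bigl(\begin{smallmatrix}1 & \ast\\ 0 & 1\end{smallmatrix}\bigr)\subset G(\mathbb{Q}_\ell)$, with $\ast\in\ell^{-s}\mathbb{Z}_\ell/\mathbb{Z}_\ell$ and $s\to\infty$ along $\underline{n}$, whereas the translations $x(\mathfrak{b})\mapsto x(\delta(\mathfrak{b}))$ implementing the $\delta$'s act, at $\ell$, through the torus $\rho(M_\ell^\times)\subset G(\mathbb{Q}_\ell)$ — split if $\ell$ splits in $M$, non-split otherwise — which does not normalize that unipotent. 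Letting $\mathfrak{a}$ run through the kernels $\mathrm{Ker}(\pi_{n_j,n_1})$ and using that $\Xi$ meets a dense set of fibers at level $n_1$, one checks that $Z$ is stable under the image in $\prod_{\delta\in\mathscr{Q}}\mathrm{Aut}(V_{/\bar{\mathbb{F}}_p})$ of the ($\ell$-adic) subgroup generated by these unipotent and torus actions, acting diagonally but twisted by $\mathrm{ad}(\delta)$ on the $\delta$-factor, together with the diagonal action of the prime-to-$c_{ns}\ell$ Hecke operators $\mathrm{Cl}^{\mathrm{alg}}$.

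With $Z$ so constrained, the conclusion follows from a rigidity (linear independence) principle — the $\ell$-adic counterpart of the ergodic rigidity underlying \cite{HidaDwork}: a Zariski closed subvariety of $V^{\mathscr{Q}}$ that dominates every factor and is stable under the above diagonal-with-twists action must be cut out exclusively by correspondences relating pairs of factors, and such a correspondence between the $\delta$- and $\delta'$-factors forces $\delta'\delta^{-1}$ to operate on $V_{/\bar{\mathbb{F}}_p}$ through the algebraic classes, i.e.\ $\delta'\delta^{-1}\in\mathrm{Cl}^{\mathrm{alg}}$. Since by hypothesis $\mathscr{Q}$ injects into $\mathrm{Cl}^-_\infty/\mathrm{Cl}^{\mathrm{alg}}$, no two elements of $\mathscr{Q}$ are congruent modulo $\mathrm{Cl}^{\mathrm{alg}}$, so no such relation survives and $Z=V^{\mathscr{Q}}$. (One may package this as an induction on $\#\mathscr{Q}$, the base case being the Fact and the inductive step being exactly the ``no new relation'' statement.) I expect the genuine obstacle to be precisely this rigidity input — that a subvariety stable under the relevant diagonal $\ell$-adic action and dominating each factor admits only the obvious diagonal relations; the reduction to finite level, the surjectivity in the first step, reading the $\ell$-adic Hecke moves off (\ref{proj}), and converting a pairwise correspondence into membership in $\mathrm{Cl}^{\mathrm{alg}}$ are all bookkeeping with the moduli description, whereas the rigidity is where the real content (and the reliance on \cite{HidaDwork}) lies.
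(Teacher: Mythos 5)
There is a genuine gap, and it sits exactly where you park it: the ``rigidity'' input is never proved, and it is also misidentified. The paper's proof (Proposition 2.8 of \cite{HidaDwork}, sketched after the statement) does not use any $\ell$-adic rigidity. It uses the diagonal action of the prime-to-$p$ CM torus $T=R^\times_{(p)}/\mathbb{Z}^\times_{(p)}$ through $\rho$: the congruence subgroup $T_1=\{\alpha\equiv 1\ (\mathrm{mod}\ \ell^{n_1})\}$ visibly permutes $\Xi^{\mathscr{Q}}$, hence stabilizes its Zariski closure; the irreducible component $Z_0$ through the $T$-fixed point $(x(R_{c_{ns}\ell^{n_1}}),\ldots,x(R_{c_{ns}\ell^{n_1}}))$ is stabilized by a finite-index subgroup $T_0$ whose \emph{$p$-adic} closure is open in $R_p^\times/\mathbb{Z}_p^\times$; Chai's Hecke-orbit rigidity (\cite{minv} Corollary 3.19, resting on Serre--Tate deformation theory at the ordinary point in characteristic $p$) then forces $Z_0$ to be a Shimura subvariety of $V^h$; and the classification of torus-stable Shimura subvarieties --- either $V^h$ or, after permuting factors, inside $V^{h-2}\times\Delta_{\beta,\beta'}$ with $\beta,\beta'\in R_{(p)}$ --- reduces the bad case to $\delta_{h-1}/\delta_h=\beta^{-1}\beta'\in M^\times$, contradicting the injection of $\mathscr{Q}$ into $\mathrm{Cl}^-_\infty/\mathrm{Cl}^{\mathrm{alg}}$. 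So the real content is $p$-adic (the linearity of formal subtori in Serre--Tate coordinates), not an ``$\ell$-adic counterpart of the ergodic rigidity underlying \cite{HidaDwork}''; in that paper the $\ell$-adic unipotent structure only supplies the Hecke relation (\ref{proj}) among CM points, which feeds the Fourier-analytic part of the non-vanishing argument, not the density theorem.

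Beyond the missing proof of your rigidity principle, two of its supporting claims are shaky. First, the elements $\delta\in\mathscr{Q}$ are (and in the application must be) represented by ideals prime to $p\ell$, so the translations $x(\mathfrak{b})\mapsto x(\delta(\mathfrak{b}))$ act through Hecke operators concentrated away from $\ell$, not through $\rho(M^\times_\ell)\subset G(\mathbb{Q}_\ell)$; the group you propose to generate at $\ell$ does not obviously stabilize $Z$, and stability under a merely diagonal action can never rule out diagonal-type subvarieties by itself. Second, even granting that $Z$ is cut out by pairwise correspondences between factors, an arbitrary correspondence between two copies of $V_{/\bar{\mathbb{F}}_p}$ does not force $\delta'\delta^{-1}\in\mathrm{Cl}^{\mathrm{alg}}$: one needs to know the correspondence is the graph of a prime-to-$p$ isogeny action $x\mapsto x\circ\rho(\beta^{-1}\beta')$ with $\beta,\beta'\in R_{(p)}$, i.e.\ precisely the ``Shimura subvariety'' conclusion that Chai's theorem delivers. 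In short, your outer reductions (finite level, surjectivity onto factors, the final contradiction with the injectivity hypothesis) match the paper, but the decisive step is left as an expectation, attributed to the wrong mechanism, and the bookkeeping that is supposed to convert a correspondence into membership in $\mathrm{Cl}^{\mathrm{alg}}$ presupposes the very rigidity being omitted.
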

A detailed proof is given in \cite{HidaDwork} and we just give a brief sketch noting that its key ingredient is an instance of Chai's Hecke orbit principle (see \cite{Ch} Section 8) stated below. Let $\mathscr{Q}=\{\delta_1,\ldots,\delta_h\}$. The torus $T=R^\times_{(p)}/\mathbb{Z}^\times_{(p)}$ acts diagonally on $V^h$ via $\prod \rho$. Let $Z_1$ be Zariski closure of $\Xi^{\mathscr{Q}}$ in $V^h$. The group $T_1=\{\alpha\in T| \alpha\equiv 1 \,(\mathrm{mod} \, \ell^{n_1})\}$ leaves $Z_1$ stable, as it just permutes $\Xi^{\mathscr{Q}}$ by $\rho(\alpha)(x(\mathfrak{a}))=x(\alpha\mathfrak{a})$. If $Z_0$ denotes the irreducible component of $Z_1$ containing $(x(R_{c_{ns}\ell^{n_1}}),\ldots,x(R_{c_{ns}\ell^{n_1}})$, then the stabilizer $T_0$ of $Z_0$ is of finite index in $T_1$ and its $p$-adic closure is open in $T$. Thus the following theorem applies.
\begin{chai}\label{chai}
\emph{(\cite{minv} Corollary 3.19)} Let $Z_0$ be an irreducible subvariety of  $V^m, m\geq 1,$ containing a fixed point of $T$. If there exists a subgroup $T_0\subset T$ whose $p$-adic closure is open in $R_p^\times/\mathbb{Z}_p^\times$ that stabilizes $Z_0$, then $Z_0$ is a Shimura subvariety of $V^m$.
\end{chai}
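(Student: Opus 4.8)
For the reader's convenience we recall the idea of the proof, which is Hida's (\cite{minv}) and ultimately rests on Chai's rigidity of Serre--Tate coordinates. Let $x_0\in V^m(\bar{\mathbb{F}}_p)$ be the $T$-fixed point contained in $Z_0$; in each of the $m$ factors it lies over an ordinary CM elliptic curve. Since the deformation theory of an ordinary elliptic curve over $\bar{\mathbb{F}}_p$ is one-dimensional, Serre--Tate canonical coordinates equip the formal completion $\widehat{V}_{x_0^{(i)}}$ at the $i$-th component $x_0^{(i)}$ of $x_0$ with the structure of a formal multiplicative group $\widehat{\mathbb{G}}_m$, so that $\widehat{(V^m)}_{x_0}\cong\widehat{\mathbb{G}}_m^m=:\mathcal{T}$ as formal tori over $\bar{\mathbb{F}}_p$. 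The action of $T$ on $V$ through $\rho$ fixes each $x_0^{(i)}$ and hence acts on $\widehat{V}_{x_0^{(i)}}$; because $p=\mathfrak{p}\bar{\mathfrak{p}}$ splits in $M$, the decomposition $R\otimes_\mathbb{Z}\mathbb{Z}_p=R_{\bar{\mathfrak{p}}}\oplus R_{\mathfrak{p}}$ gives $R_p^\times\cong\mathbb{Z}_p^\times\times\mathbb{Z}_p^\times$ and identifies $R_p^\times/\mathbb{Z}_p^\times$ with $\mathbb{Z}_p^\times$ via $(u,v)\mapsto uv^{-1}$, and the induced action on $\widehat{V}_{x_0^{(i)}}\cong\widehat{\mathbb{G}}_m$ is the homothety $t\mapsto t^{\zeta}$, where $\zeta\in\mathbb{Z}_p^\times$ is the image of the acting element. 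Thus $T_0$ acts on $\mathcal{T}$ diagonally by $(t_1,\dots,t_m)\mapsto(t_1^{\zeta},\dots,t_m^{\zeta})$, and by hypothesis the exponents $\zeta$ so obtained form a subset of $\mathbb{Z}_p^\times$ with open $p$-adic closure.

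The plan is then to descend to $\mathcal{T}$. Form the formal completion $\widehat{Z_0}\subset\mathcal{T}$ of $Z_0$ along $x_0$; it is a closed formal subscheme stable under the diagonal homothety action of $T_0$ just described. Chai's rigidity theorem for Serre--Tate coordinates (in the form used in \cite{minv}; cf.\ \cite{Ch} Section 8) asserts that an irreducible closed formal subscheme of $\widehat{\mathbb{G}}_m^m$ over $\bar{\mathbb{F}}_p$ which is stable under the diagonal homothety action $t\mapsto t^{\zeta}$ for $\zeta$ ranging over a subset of $\mathbb{Z}_p^\times$ with open $p$-adic closure is necessarily the translate of a formal subtorus by a torsion point. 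Since $x_0\in Z_0$ corresponds to the identity section of $\mathcal{T}$, the torsion translate is trivial, so $\widehat{Z_0}$ is a formal subtorus of $\widehat{\mathbb{G}}_m^m$, i.e.\ it is cut out by a finite system of monomial relations $\prod_i t_i^{n_i}=1$.

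It remains to recognize $Z_0$ globally. Because $V$ is an irreducible component of a Shimura (modular) curve, the Shimura subvarieties of $V^m$ are precisely the irreducible components of the loci obtained by prescribing prime-to-$p$ isogenies among the $m$ universal elliptic curves and by pinning some of them to fixed CM points; the linearity principle for Shimura subvarieties (due in this generality to Chai, see \cite{minv}; in the present $\mathrm{GL}(2)$-setting it is classical) characterizes these, among all closed subvarieties of $V^m$, as exactly those whose formal completion at an ordinary CM point is a formal subtorus of the Serre--Tate torus. Applying this to $Z_0$ and the point $x_0$ yields the conclusion.

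The hard part is the rigidity statement itself: stability under a single one-parameter family of homotheties $t\mapsto t^{\zeta}$ --- rather than under the full split torus acting on $\widehat{\mathbb{G}}_m^m$ --- must already force linearity, and this is precisely where the hypothesis that the $p$-adic closure of $T_0$ be \emph{open} in $R_p^\times/\mathbb{Z}_p^\times$ enters. In Chai's argument one studies the action of the homothety group on the defining ideal of $\widehat{Z_0}$ via the $p$-adic logarithm and the Tate-twist structure, and openness of the set of exponents is exactly what makes the attendant linearization/averaging argument work; without it one cannot rule out non-linear $T_0$-stable formal subschemes.
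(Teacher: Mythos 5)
You should first note that the paper itself offers no proof of this statement: it is quoted verbatim as Corollary 3.19 of \cite{minv} and used as a black box in the sketch of Theorem \ref{density}, so there is no internal argument to compare yours against. Judged on its own terms, your outline correctly reproduces the architecture of the Hida--Chai argument: complete along the $T$-fixed ordinary CM point, use Serre--Tate coordinates to identify the completion of $V^m$ with the formal torus $\widehat{\mathbb{G}}_m^m$, observe that (since $p=\mathfrak{p}\bar{\mathfrak{p}}$ splits) an element of $T$ acts on each Serre--Tate coordinate by the homothety $t\mapsto t^{\zeta}$ with $\zeta=\alpha_{\mathfrak{p}}\alpha_{\bar{\mathfrak{p}}}^{-1}$, invoke Chai's rigidity to conclude that the completion of $Z_0$ is a formal subtorus, and then globalize. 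Two caveats keep this from being a proof rather than a reduction to the cited results. First, Chai's rigidity applies to an irreducible closed formal subscheme, while the formal completion of the irreducible variety $Z_0$ need not be irreducible; one has to argue on its finitely many components (an open subgroup of $T_0$ stabilizes each), a point you pass over, and over $\bar{\mathbb{F}}_p$ the ``torsion translate'' in your statement can only be $p$-power torsion, which is then killed by the fixed-point hypothesis as you say. Second, and more seriously, your final step --- that a closed subvariety of $V^m$ whose formal completion at an ordinary CM point is a formal subtorus must be a Shimura subvariety --- is not ``classical'' even for self-products of modular curves; it is precisely the hard, global half of the quoted theorem. One must show that the cocharacter of the formal subtorus is rational (not merely a $\mathbb{Z}_p$-direction), so that the subtorus is the formal completion of the graph of a prime-to-$p$ isogeny correspondence, and this local-to-global linearity is exactly what Hida establishes in Section 3 of \cite{minv}, building on Chai's work; asserting it as a known ``linearity principle'' assumes the crux of what is to be proved. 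Since the paper merely cites \cite{minv} at this point, treating both rigidity and linearity as external inputs is acceptable, but your write-up should present them as such rather than as routine facts.
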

As $Z_0$ is nontrivial Shimura subvariety of $V^h$ (i.e. not of the form $V^{h-1}\times\{x\}$ for a fixed CM point $x$), this leaves two possibilities: either 
$Z_0=V^h$ or, after a permutation of factors of $V^h$, $Z_0$ is a Shimura subvariety of $V^{h-2}\times \Delta_{\beta,\beta'}$, where for some $\beta,\beta'\in R_{(p)}$ we define a diagonal by
\[ \Delta_{\beta,\beta'}=\{ (x\circ\rho(\beta),x\circ\rho(\beta'))| x \in V\}=\{(x,x\circ\rho(\beta^{-1}\beta'))|x\in V\}\subset V^2\,.\]
However, the second possibility imposes $\delta_{h-1}/\delta_h=\beta^{-1}\beta'\in M^\times$, whence $\delta_{h-1}\mathrm{Cl}^{\mathrm{alg}}=\delta_h \mathrm{Cl}^{\mathrm{alg}}$ and Theorem \ref{density} follows.

Let $\mathbf{\mathcal{C}}_{\Xi}$ denote the space of functions on $\Xi$ with values in $\mathrm{P}^1(\bar{\mathbb{F}}_p)=\bar{\mathbb{F}}_p\sqcup\{\infty\}$. The  class group $\mathrm{Cl}^-_\infty$ acts on $\mathbf{\mathcal{C}}_{\Xi}$ by left translation. By virtue of Zariski density of $\Xi$ in $V$ we can embed the function field of $V$ into $\mathbf{\mathcal{C}}_{\Xi}$. Then Theorem \ref{density} has the following 
\begin{linebundle}\label{linebundle}\emph{(\cite{HidaDwork} Corollary 2.9)} Let $\mathscr{L}$ be a line bundle over $V_{{/\bar{\mathbb{F}}_p}}$. Then for a finite set $\mathscr{Q}\subset \mathrm{Cl}^-_\infty$ that injects into $\mathrm{Cl}^-_\infty/\mathrm{Cl}^{\mathrm{alg}}$ and a set $\{f_\delta\in \mathscr{L}\mid \delta \in \mathscr{Q}\}$ of non-constant global sections $f_\delta$ of $\mathscr{L}$ finite at $\Xi$, the functions $f_\delta\circ\delta$, $\delta \in \mathscr{Q}$, are linearly independent in $\mathbf{\mathcal{C}}_{\Xi}$.
\end{linebundle}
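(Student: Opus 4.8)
The plan is to argue by contradiction, transporting a hypothetical linear dependence from $\Xi$ to the whole product $V^{\mathscr{Q}}$ by means of Theorem \ref{density}. Suppose $\sum_{\delta\in\mathscr{Q}}c_\delta\,(f_\delta\circ\delta)=0$ in $\mathbf{\mathcal{C}}_{\Xi}$ with $c_\delta\in\bar{\mathbb{F}}_p$ not all zero. Discarding the vanishing coefficients, I may assume every $c_\delta\neq 0$; the hypothesis that each $f_\delta$ is finite at $\Xi$ makes each $f_\delta\circ\delta$ a genuine $\bar{\mathbb{F}}_p$-valued function, so the relation is an honest linear dependence. First I would introduce the coordinate map $\iota:\Xi\to V^{\mathscr{Q}}$, $x(\mathfrak{a})\mapsto(x(\delta(\mathfrak{a})))_{\delta\in\mathscr{Q}}$, whose image is exactly $\Xi^{\mathscr{Q}}$; since $\mathscr{Q}$ injects into $\mathrm{Cl}^-_\infty/\mathrm{Cl}^{\mathrm{alg}}$, Theorem \ref{density} gives that $\Xi^{\mathscr{Q}}$ is Zariski dense in $V^{\mathscr{Q}}_{/\bar{\mathbb{F}}_p}$.

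Next I would rephrase the relation on $V^{\mathscr{Q}}$. Writing $\mathrm{pr}_\delta:V^{\mathscr{Q}}\to V$ for the projections, $\mathrm{pr}_\delta^*f_\delta$ is a global section of the line bundle $\mathrm{pr}_\delta^*\mathscr{L}$, and by construction its value at $\iota(x(\mathfrak{a}))$ is $(f_\delta\circ\delta)(x(\mathfrak{a}))$ — the comparison of fibers being the canonical trivialization at the CM points supplied by the ordinary level structure at $p$. Fixing $\delta_0\in\mathscr{Q}$ and dividing by $\mathrm{pr}_{\delta_0}^*f_{\delta_0}$, on the dense open subset of $\Xi^{\mathscr{Q}}$ where this denominator is nonzero I obtain
\[
c_{\delta_0}+\sum_{\delta\neq\delta_0}c_\delta\,\frac{\mathrm{pr}_\delta^*f_\delta}{\mathrm{pr}_{\delta_0}^*f_{\delta_0}}=0,
\]
an identity of rational functions which, by Zariski density of $\Xi^{\mathscr{Q}}$, holds throughout $V^{\mathscr{Q}}$.

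Finally I would extract the contradiction by the standard observation that rational functions pulled back from distinct factors of a product are independent. If $\mathscr{Q}=\{\delta_0\}$, the original relation already forces $f_{\delta_0}$ to vanish on the Zariski-dense set $x(\delta_0(\Xi))$ (Theorem \ref{density} for the singleton), hence identically, contradicting non-constancy; so there is some $\delta_1\neq\delta_0$. Freezing all coordinates of $V^{\mathscr{Q}}$ except the $\delta_1$-th at a generic point (where in particular $\mathrm{pr}_{\delta_0}^*f_{\delta_0}$ does not vanish), every summand with $\delta\neq\delta_0,\delta_1$ as well as $c_{\delta_0}$ becomes constant in the remaining variable $x_{\delta_1}$, whereas the $\delta_1$-summand becomes $c_{\delta_1}f_{\delta_1}(x_{\delta_1})$ divided by a nonzero constant, which is non-constant in $x_{\delta_1}$ because $f_{\delta_1}$ is non-constant on $V$. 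Hence $c_{\delta_1}=0$, contrary to our normalization.

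The only genuine input is Theorem \ref{density} (ultimately Chai's Hecke-orbit principle together with the openness of the relevant torus stabilizer), so the principal obstacle is already dispatched by the cited result. What remains demands care only in two formal points: that the dependence on $\Xi$ descends to one on $V^{\mathscr{Q}}$, which rests precisely on the Zariski density of $\Xi^{\mathscr{Q}}$; and that a global section of $\mathscr{L}$ is not itself a function, so one must first normalize by a fixed nonzero section $\mathrm{pr}_{\delta_0}^*f_{\delta_0}$ before speaking of identities of rational functions.
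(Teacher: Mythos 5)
Your overall skeleton (pull back to $V^{\mathscr{Q}}$, invoke Theorem \ref{density}, then separate variables to contradict non-constancy) is the right shape, and the final variable-freezing step is fine as far as it goes. The genuine gap is the middle step, which is exactly where the content of the corollary lies: for $\delta\neq\delta_0$ the ``ratio'' $\mathrm{pr}_\delta^*f_\delta/\mathrm{pr}_{\delta_0}^*f_{\delta_0}$ is not a rational function on $V^{\mathscr{Q}}$ at all, but a rational section of $\mathrm{pr}_\delta^*\mathscr{L}\otimes(\mathrm{pr}_{\delta_0}^*\mathscr{L})^{-1}$, which is nontrivial whenever $\mathscr{L}$ is. Worse, the numbers you feed into the dependence relation at a point of $\Xi^{\mathscr{Q}}$ are computed by trivializing two \emph{different} fibres, $\mathscr{L}_{x(\delta(\mathfrak{a}))}$ and $\mathscr{L}_{x(\delta_0(\mathfrak{a}))}$, via the CM/ordinary structure; no fixed algebraic section or rational function on $V^{\mathscr{Q}}$ computes these quantities, so Zariski density of $\Xi^{\mathscr{Q}}$ cannot be applied to promote the pointwise relation to an identity ``throughout $V^{\mathscr{Q}}$.'' This is not a cosmetic point: if one were allowed arbitrary fibrewise identifications $\mathscr{L}_{y}\cong\bar{\mathbb{F}}_p$ at the CM points, the linear-independence statement would simply be false (one can rescale the values at the points $x(\delta(\mathfrak{a}))$, $\delta\neq\delta_0$, to manufacture a dependence), so any correct proof must actually use the specific trivialization by $\ord{\mathfrak{a}}$ (equivalently by $\omega_p(\mathfrak{a})$), which your argument mentions only parenthetically and never exploits.

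The standard repair is to make that trivialization algebraic by passing to the Igusa covering of the ordinary locus: there the relevant line bundles are canonically trivialized, the sections $f_\delta$ become honest functions whose values at the canonical lifts of the $x(\mathfrak{a})$ are by definition the quantities $f_\delta\circ\delta$ appearing in $\mathbf{\mathcal{C}}_{\Xi}$, and the density of the lifted $\Xi^{\mathscr{Q}}$ is deduced from Theorem \ref{density} (the closure surjects onto $V^{\mathscr{Q}}$ levelwise and the Igusa varieties and their products are irreducible). After that reduction, your division by $\mathrm{pr}_{\delta_0}^*f_{\delta_0}$, the density argument (noting, via irreducibility of $V^{\mathscr{Q}}$, that the points of $\Xi^{\mathscr{Q}}$ off the zero divisor of the denominator are still dense), and the separation of variables go through verbatim; this is in substance how the cited result (\cite{HidaDwork} Corollary 2.9, which the present paper does not reprove but simply quotes) is established. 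As written, though, your proof skips the one non-formal ingredient.
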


\section{Non-vanishing of $L$-values modulo $p$}
In this section we prove Theorem \ref{main} following closely Section 3.4 of \cite{HidaDwork}, particularly the proof of Theorem 3.2 there. That being said, we avoid constructing anticyclotomic measure, in order not to impose condition $a(\ell,f)\not =0$. Before proceeding to the proof, we need a technical lemma. To this end, for $\alpha\in \mathrm{GL}_2^+(\mathbb{R})$ and a classical modular form $f\in S_k(\Gamma_0(N),\psi)$, we define
\[f\|_k\alpha=\mathrm{det}(\alpha)^{k/2}f(\alpha(z))j(\alpha,z)^{-k}\]
Note that the operator $\|_k$ depends on the weight of the form, but since the weight will always be clear from the context we shall write it as $\|$. We use non-standard notation to distinguish it from the isogeny action defined in (\ref{isogmf}). In the following lemma, we investigate the effect of the isogeny action defined in Section \ref{sec:isog}, on $q$-expansions of Katz $p$-adic derivatives of modular forms. As in (\ref{isogmf}) of Section \ref{sec:isog}, for $f\in V(N;W)$ and a prime $r$, we consider action  
\[f|[r] = f|\bigl(\begin{smallmatrix} 1 & 0 \\ 0 & r \end{smallmatrix} \bigr) \]
where $\bigl(\begin{smallmatrix} 1 & 0 \\ 0 & r \end{smallmatrix} \bigr)\in G(\mathbb{Q}_r)\subset G(\mathbb{A}^{(\infty)})$ is concentrated at $r$. If $r=r_1\cdot\ldots\cdot r_n$ is a square-free product of primes, we define
\[f|[r]=f|[r_1]|\ldots |[r_n]\,.\]
In this section we use $\mathbf{q}$ to denote the variable in $\mathbf{q}$-expansion of a modular form in order to avoid abuse of notation in the proof of the main theorem.
Then we have the following
\begin{isogmaneuver}\label{isogmaneuver}
For $f\in S_k(\Gamma_0(N),\psi)$ and a square-free integer $r=r_1\cdot\ldots\cdot r_n$ prime to $N$, we have 
\[(d^mf)|[r]=\psi(r)r^{k/2+m}d^m\left(f\| \bigl(\begin{smallmatrix} 1 & 0 \\ 0 & r \end{smallmatrix} \bigr)^{-1}\right)\,.\]
In particular, the $\mathbf{q}$-expansion of $(d^mf)|[r]$ is given by
\[ \psi(r)r^{k+2m}\sum_{n\geq 0}n^m a(n,f)\mathbf{q}^{nr}\,. \]
\end{isogmaneuver}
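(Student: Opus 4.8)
The plan is to prove the displayed identity by comparing $\mathbf{q}$-expansions and invoking the $q$-expansion principle: both sides are $p$-adic modular forms in $V(rN;W)$, and the $\mathbf{q}$-expansion map $V(rN;W)\to W[[\mathbf{q}]]$ is injective, so it suffices to check that the two $\mathbf{q}$-expansions agree; the explicit formula for the $\mathbf{q}$-expansion of $(d^mf)|[r]$ then drops out of the same computation. First I would reduce to the case of a single prime. Since $\|_k$ is a right action of $\mathrm{GL}_2^+(\mathbb{R})$, the matrices $\bigl(\begin{smallmatrix} 1 & 0 \\ 0 & r_i \end{smallmatrix}\bigr)^{-1}$ commute and their product is $\bigl(\begin{smallmatrix} 1 & 0 \\ 0 & r \end{smallmatrix}\bigr)^{-1}$, while $f\|_k\bigl(\begin{smallmatrix} 1 & 0 \\ 0 & r_i \end{smallmatrix}\bigr)^{-1}\in S_k(\Gamma_0(r_iN),\psi)$; hence, granting the single-prime case, applying it successively to $r_1,\dots,r_n$ together with the definition $f|[r]=f|[r_1]|\cdots|[r_n]$ and the multiplicativity of $\psi$ gives the general statement. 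So from now on $r$ is a prime with $r\nmid N$.

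For the right-hand side I would argue as follows. A direct computation from the definition of $\|_k$, using $\bigl(\begin{smallmatrix} 1 & 0 \\ 0 & r \end{smallmatrix}\bigr)^{-1}=\bigl(\begin{smallmatrix} 1 & 0 \\ 0 & 1/r \end{smallmatrix}\bigr)$, whose determinant is $1/r$, which sends $z\mapsto rz$, and for which $j\bigl(\bigl(\begin{smallmatrix} 1 & 0 \\ 0 & 1/r \end{smallmatrix}\bigr),z\bigr)=1/r$, gives $f\|_k\bigl(\begin{smallmatrix} 1 & 0 \\ 0 & r \end{smallmatrix}\bigr)^{-1}(z)=r^{k/2}f(rz)$; viewed as a $p$-adic modular form in $V(rN;W)$ this has $\mathbf{q}$-expansion $r^{k/2}\sum_{n\ge0}a(n,f)\mathbf{q}^{nr}$. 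Applying (\ref{diffqexp}) coefficient by coefficient, $d^m\bigl(f\|_k\bigl(\begin{smallmatrix} 1 & 0 \\ 0 & r \end{smallmatrix}\bigr)^{-1}\bigr)$ has $\mathbf{q}$-expansion $r^{k/2}\sum_n (nr)^m a(n,f)\mathbf{q}^{nr}=r^{k/2+m}\sum_n n^m a(n,f)\mathbf{q}^{nr}$, so the right-hand side $\psi(r)r^{k/2+m}d^m\bigl(f\|_k\bigl(\begin{smallmatrix} 1 & 0 \\ 0 & r \end{smallmatrix}\bigr)^{-1}\bigr)$ has $\mathbf{q}$-expansion $\psi(r)\,r^{k+2m}\sum_n n^m a(n,f)\mathbf{q}^{nr}$.

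For the left-hand side I would use the geometric description of $[r]$ from Section \ref{sec:isog}, where $f|[r]=f|\bigl(\begin{smallmatrix} 1 & 0 \\ 0 & r \end{smallmatrix}\bigr)$: on the Tate curve this operator divides by the canonical order-$r$ subgroup, transports the prime-to-$p$ level structure, and replaces the invariant differential by $(\pi^*)^{-1}\omega$; on $\mathbf{q}$-expansions it effects the substitution $\mathbf{q}\mapsto\mathbf{q}^r$ followed by multiplication by $\psi(r)$ and by the power of $r$ equal to the weight — which for weight $k$ is precisely the classical identity $f|[r](z)=\psi(r)r^kf(rz)$ recorded there. Now (\ref{K-S}) exhibits $d^mf$ as homogeneous of weight $k+2m$, in the sense that rescaling the ordinary $p$-trivialization (equivalently, the differential $\omega_p$) by $a$ multiplies the value of $d^mf$ by $a^{k+2m}$; running the same argument with $k$ replaced by $k+2m$ therefore shows that $(d^mf)|[r]$ has $\mathbf{q}$-expansion $\psi(r)\,r^{k+2m}\sum_n n^m a(n,f)\mathbf{q}^{nr}$. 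This agrees with the $\mathbf{q}$-expansion of the right-hand side found above, so the two $p$-adic modular forms are equal by the $q$-expansion principle, and the displayed $\mathbf{q}$-expansion of $(d^mf)|[r]$ is the one just computed.

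I expect the only genuinely nontrivial point to be making this geometric computation of $[r]$ on $\mathbf{q}$-expansions fully precise: identifying $Tate(q^N)/C$ with the appropriate Tate curve and the ensuing change of expansion variable, verifying that the $p$-trivialization transported along the degree-$r$ isogeny is $r$ times the canonical one so that the weight-homogeneity of $d^mf$ from (\ref{K-S}) contributes the factor $r^{k+2m}$, and locating the nebentypus factor $\psi(r)$ in the transport of the $\Gamma_1(N)$-structure (equivalently in the adelic normalization $[r]=\bigl(\begin{smallmatrix} 1 & 0 \\ 0 & r \end{smallmatrix}\bigr)$). Once the weight-$k$ case of Section \ref{sec:isog} is granted, this reduces to propagating the weight exponent through the homogeneity relation, and everything else is the elementary bookkeeping with $\|_k$ and (\ref{diffqexp}) above.
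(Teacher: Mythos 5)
Your proof is essentially correct, but it follows a genuinely different route from the paper's, and the step you defer is precisely where the two arguments diverge. The paper never compares $\mathbf{q}$-expansions to prove the identity: it proves it pointwise on the ordinary locus. For prime $r$ it establishes the archimedean identity (\ref{subtle}) by viewing a test object as $[z,1]\in Sh(\mathbb{C})$, decomposing the rational matrix $\bigl(\begin{smallmatrix} 1 & 0 \\ 0 & r \end{smallmatrix}\bigr)$ adelically, and using the left $G(\mathbb{Q})$-invariance together with the $\widehat{\Gamma}_0(N)$-transformation of the arithmetic lift $\delta_k^m\mathbf{f}$ (this is where $\psi(r)$ and $r^{k/2+m}$ arise); it then divides by $\Omega_\infty^{k+2m}$, resp.\ $\Omega_p^{k+2m}$, and uses the Katz--Shimura comparison (\ref{K-S}) to convert this into the asserted identity for $d^m$, reading off the $\mathbf{q}$-expansion only at the end from (\ref{diffqexp}) applied to the right-hand side. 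You instead compute both $\mathbf{q}$-expansions and appeal to the $q$-expansion principle; your reduction to prime $r$ and your right-hand side computation agree with the paper, but your left-hand side needs an independent evaluation of $[r]$ on the Tate curve applied to the $p$-adic form $d^mf$, and the factor $r^{k+2m}$ there rests on the weight-raising property of Katz's operator $d$: that $d^mf$ transforms by the $(k+2m)$-th power under unit rescaling of the trivialization and still by $\psi$ in the $i_N$-variable. That homogeneity is not what (\ref{K-S}) literally says (it is a rationality statement at CM points), so you must import it from Katz's theory (\cite{Ka}, Chapter II), and you still owe the Tate-curve bookkeeping you flag: the subgroup selected by $\bigl(\begin{smallmatrix} 1 & 0 \\ 0 & r \end{smallmatrix}\bigr)$ is the multiplicative one, the transported trivialization is the canonical one precomposed with multiplication by the unit $r$ (note $r$ must also be prime to $p$, as it is in the application), and $\psi(r)$ comes from the transported level structure via (G3). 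Once this is written out your argument is complete and purely $\mathbf{q}$-expansion-theoretic, whereas the paper's route yields the identity directly as an equality of values at CM test objects -- the form in which it is used in (\ref{torusint}) -- and sidesteps formalizing the weight of $d^mf$ as a $p$-adic modular form.
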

\begin{proof}
Let $x=(X,\eta^{(p)}, \eta^{ord})$ denote a general test object that gives rise to a point in the ordinary locus of $Sh$, and let $\omega_p(x)$ be the invariant differential induced by $\eta^{ord}$ as in Section \ref{diffop}. 

We first prove the assertion when $r$ is a prime. If we set $\alpha=\bigl(\begin{smallmatrix} 1 & 0 \\ 0 & r \end{smallmatrix}\bigr)\in G(\mathbb{Q})\subset G(\mathbb{A})$ then the crux of the proof is the following identity that holds for all $m\geq 0$:
\begin{equation}\label{subtle}
\delta_k^mf((x,\omega_\infty(x))\circ \alpha_r) = \psi(r)r^{k/2+m}\delta_k^m (f\|\alpha_\infty^{-1})(x ,\omega_\infty(x))\;. 
\end{equation}
To verify it, recall that if $x\in Sh$ corresponds to $[z,g^{(\infty)}]$ in $Sh(\mathbb{C})$, for some $z\in\mathfrak{X}$ and $g^{(\infty)}\in  G(\mathbb{A}^{(\infty)})$, then by definition
\[\delta_k^mf(x ,\omega_\infty(x))=\delta_k^mf([z,g^{(\infty)}])=\delta_k^m\mathbf{f}(g)j(g_\infty, \mathbf{i})^{k+2m} \]
where $g_\infty \in G(\mathbb{R})$ is such that $g_\infty(\mathbf{i})=z$ and $g=g^{(\infty)}g_\infty$.
Note that we need to check identity over $Sh/\widehat{\Gamma}_1(N)$ only, so without loss of generality we may assume that $g^{(\infty)}=1$.
{\allowdisplaybreaks \begin{align} \nonumber
\delta_k^mf((x ,\omega_\infty(x))\circ \alpha_r)&=\delta_k^m f([z,1]\circ \alpha_r) \\ 
&= \delta_k^m f([z,\alpha_r]) \nonumber \\
&= \delta_k^m\mathbf{f}( \alpha_r g_\infty ) j(g_\infty, \mathbf{i} )^{k+2m} \nonumber \\
&= \delta_k^m\mathbf{f}\left(\alpha (\alpha^{(r\infty)})^{-1} \alpha_\infty^{-1}g_\infty \right) j(g_\infty, \mathbf{i} )^{k+2m}  \nonumber \\
&= \boldsymbol{\psi}(\alpha^{(r\infty)})^{-1} \delta_k^m\mathbf{f} \left(\alpha_\infty^{-1}g_\infty \right) j(g_\infty, \mathbf{i} )^{k+2m} \nonumber \\
&= \psi(r) \delta_k^m f (\alpha_\infty^{-1}g_\infty(\mathbf{i}))j(\alpha_\infty^{-1}g_\infty,\mathbf{i})^{-k-2m} j(g_\infty, \mathbf{i} )^{k+2m} \nonumber \\
&= \psi(r) \delta_k^m f (\alpha_\infty^{-1}(z))j(\alpha_\infty^{-1},z)^{-k-2m}  \nonumber \\
&= \psi(r) r^{k/2+m} (\delta_k^m f)\|\alpha_\infty^{-1}(z) \nonumber \\
&= \psi(r) r^{k/2+m} \delta_k^m (f\|\alpha_\infty^{-1})([z,1]) \nonumber \\
&= \psi(r) r^{k/2+m} \delta_k^m (f\|\alpha_\infty^{-1})(x ,\omega_\infty(x)) \nonumber 
\end{align} }
as desired. Here we used a general fact $(\delta_k^mf)\|\alpha_\infty^{-1}=\delta_k^m(f\|\alpha_\infty^{-1})$ and an obvious fact that for this particular $\alpha$ we have $(\alpha^{(r\infty)})\in\widehat{\Gamma}_0(N)$.

By the Katz--Shimura rationality result (\ref{K-S}) we have
{\allowdisplaybreaks \begin{align} \nonumber 
\frac{d^mf((x,\omega_p(x))\circ\alpha_r)}{\Omega_p^{k+2m}} &= \frac{\delta_k^mf((x ,\omega_\infty(x))\circ \alpha_r)}{\Omega_\infty^{k+2m}}\\
&=\psi(r) r^{k/2+m}\frac{\delta_k^m (f\|\alpha_\infty^{-1})(x ,\omega_\infty(x))}{\Omega_\infty^{k+2m}} \nonumber \\ 
&=  \psi(r) r^{k/2+m}\frac{d^m (f\|\alpha_\infty^{-1})(x ,\omega_p(x))}{\Omega_p^{k+2m}} \nonumber
\end{align} }
which yields
\[ d^mf((x,\omega_p(x))\circ\alpha_r) =  \psi(r) r^{k/2+m}d^m (f\|\alpha_\infty^{-1}) \text{ for all } m\geq 0\,.\]
The assertion about the $\mathbf{q}$-expansion of $(d^mf)|[r]$ now easily follows from (\ref{diffqexp}).

To prove the lemma for an arbitrary square-free integer $r$, by induction it suffices to verify it when $r=r_1r_2$ is a product of two primes. Indeed,
{\allowdisplaybreaks \begin{align} \nonumber 
d^mf|[r] = d^mf|[r_1]|[r_2] &= \psi(r_1)r_1^{k/2+m}d^m\left(f\| \bigl(\begin{smallmatrix} 1 & 0 \\ 0 & r_1 \end{smallmatrix} \bigr)^{-1}\right)|[r_2] \nonumber \\
&=\psi(r_1)r_1^{k/2+m}\psi(r_2)r_2^{k/2+m}d^m\left(f\| \bigl(\begin{smallmatrix} 1 & 0 \\ 0 & r_1 \end{smallmatrix} \bigr)^{-1} \bigl(\begin{smallmatrix} 1 & 0 \\ 0 & r_2 \end{smallmatrix} \bigr)^{-1}\right) \nonumber \\
&=\psi(r)r^{k/2+m}d^m\left(f\| \bigl(\begin{smallmatrix} 1 & 0 \\ 0 & r \end{smallmatrix} \bigr)^{-1}\right) \nonumber
\end{align} }
and the claimed effect on the $\mathbf{q}$-expansion easily follows.
\end{proof}

Now we are ready to prove Theorem \ref{main}.
\begin{proof}
Classical modular forms are defined over a number field and we may assume that $f$ is defined over a localization $\mathcal{V}$ of the integer ring in a number field $E$. We take a finite extension of $W$ generated by $\mathcal{V}$ and the values of arithmetic Hecke character $\lambda$ and, abusing the symbol, we keep denoting it $W$. We write $\mathfrak{P}$ for the prime ideal of $W$ corresponding to $\iota_p$. The considered values $d^mf(\mathfrak{a})$ are algebraic and $\mathfrak{P}$-integral over $\mathcal{V}$ by results of Shimura and Katz (\cite{Sh75} and \cite{Ka}).

We fix a decomposition $\mathrm{Cl}^-_{\infty}=\Delta\times\Gamma$ where $\Gamma$ is a torsion free subgroup topologically isomorphic to $\mathbb{Z}_\ell$ and $\Delta$ is a finite group. Denote by $\mathbb{F}_{p}[f,\lambda]$ the finite subfield of $\bar{\mathbb{F}}_p$ generated by values of $\lambda\; \mathrm{mod}\;\mathfrak{P}$ and all $\ell|\Delta|$-th roots of unity over $\mathcal{V}/\mathfrak{P}\cap\mathcal{V}$. Similarly, if $\chi:\mathrm{Cl}^-_n\to\bar{\mathbb{F}}_p^\times$ is a character, denote by $\mathbb{F}_{p}[f,\lambda](\chi)$ the finite extension of $\mathbb{F}_{p}[f,\lambda]$ generated by values of $\chi$. 

As $\Delta$ is finite group, it suffices to fix a branch character $\nu:\Delta \to \bar{\mathbb{F}}_p^\times$ and, aiming for contradiction, suppose that 
\[L^{\mathrm{alg}}(\frac{1}{2},\hat{\pi}_{\mathbf{f}}\otimes (\lambda\chi_j)^-) \equiv 0 \; (\mathrm{mod}\, \mathfrak{P})\]
for infinitely many characters $\chi_j: \mathrm{Cl}^-_{n_j} \to \bar{\mathbb{F}}_p^\times$ such that $\chi_j|_\Delta=\nu$, where $\{n_j\}_{j=1}^\infty$ is an infinite sequence of integers. Suppose that prime $p>2$ is outside finite set $\mathcal{S}(N,\ell)$ given by (\ref{fudgeset}).
Then by plugging $\chi_m:=\lambda  \cdot \chi_j \cdot |\cdot|_{M_\mathbb{A}}^m$ in (\ref{torusint}) and (\ref{wald}), we have
\[\sum_{\mathfrak{a}\in\mathrm{Cl}_{n_j}}\lambda\chi_j(\mathfrak{a}^{-1})d^mf(x(\mathfrak{a}))=0 \text{ in } \bar{\mathbb{F}}_p\,. \]
Moreover, for each $\sigma \in \mathrm{Gal}(\bar{\mathbb{F}}_p/\mathbb{F}_{p}[f,\lambda])$ we have 
\begin{equation}\label{galois}
\sum_{\mathfrak{a}\in\mathrm{Cl}_{n_j}}(\lambda\chi_j)^\sigma(\mathfrak{a}^{-1})d^mf(x(\mathfrak{a}))=0 \,.
\end{equation}
Indeed, by Shimura's reciprocity law (\cite{ACM} 26.8 and \cite{PAF} 2.1.4), we have
\[\Phi(d^mf(x(\mathfrak{a})))=d^mf(x(\mathfrak{p}^{-1}\mathfrak{a}))\]
for the Frobenius map $\Phi(x)=x^p$ for $x\in \bar{\mathbb{F}}_p$. Thus, if $\sigma=\Phi^n$ for a positive integer $n$, we have
\[ \Phi^n \left( \sum_{\mathfrak{a}\in\mathrm{Cl}_{n_j}}\lambda\chi_j(\mathfrak{a}^{-1})d^mf(x(\mathfrak{a})) \right)= (\lambda\chi_j)^\sigma(\mathfrak{p}^n)\sum_{\mathfrak{a}\in\mathrm{Cl}_{n_j}}(\lambda\chi_j)^\sigma(\mathfrak{a}^{-1})d^mf(x(\mathfrak{a}))\,. \]
Consider the trace map from the field $\mathbb{F}_{p}[f,\lambda](\chi_j)$ to $\mathbb{F}_{p}[f,\lambda]$ given by 
\[\mathrm{Tr}_{\mathbb{F}_{p}[f,\lambda](\chi_j)/\mathbb{F}_{p}[f,\lambda]}(\xi)=\sum_{\sigma\in\mathrm{Gal}(\mathbb{F}_{p}[f,\lambda](\chi_j)/\mathbb{F}_{p}[f,\lambda])}\sigma(\xi)\]
for $\xi\in\mathbb{F}_{p}[f,\lambda](\chi_j)$ and note that if $\mathbb{F}_{p}[f,\lambda]^\times\cap\mu_{\ell^\infty}=\mu_{\ell^s}$ we have 
\[ \mathrm{Tr}_{\mathbb{F}_{p}[f,\lambda](\chi_j)/\mathbb{F}_{p}[f,\lambda]}(\chi_j(x))=
\begin{cases}
\ell^{n_j-s}\chi_j(x) & \text{if }\chi_j(x)\in \mathbb{F}_{p}[f,\lambda],\\
0 & \text{otherwise}.
\end{cases} \]
If $\langle \cdot \rangle : \mathrm{Cl}^-_\infty \to \Gamma$ denotes the natural projection, from (\ref{galois}) we conclude
\begin{equation}\label{medgal}
\sum_{\mathfrak{a}\in\mathrm{Cl}^-_{n_j}:\langle\mathfrak{a}\rangle\in \chi_j^{-1}(\mu_{\ell^s})}\lambda\chi_j(\mathfrak{a}^{-1})d^mf(x(\mathfrak{a})) = 0 \,.
\end{equation}
Note that here $\langle\mathfrak{a}\rangle\in \chi_j^{-1}(\mu_{\ell^s})$ if and only if $\langle\mathfrak{a}\rangle\in \Gamma^{\ell^{n_j-s}}/\Gamma^{\ell^{n_j}}$. Moreover, if $\Gamma_j$ denotes the image of $\Gamma$ in $\mathrm{Cl}^-_{n_j}$, as the summands are independent of the choice of a proper ideal class representative in $\mathrm{Cl}^-_{n_j}$, we conclude from (\ref{medgal}) that for every $y\in\Gamma_j$ we have
\begin{equation}\label{newgalois}
\sum_{\mathfrak{a}\in\mathrm{Cl}^-_{n_j}:\langle\mathfrak{a}\rangle\in y\chi_j^{-1}(\mu_{\ell^s})}\lambda\chi_j(\mathfrak{a}^{-1})d^mf(x(\mathfrak{a})) = 0 \,.
\end{equation}

The group $\Delta^{\mathrm{alg}}=\Delta\cap \mathrm{Cl}^{\mathrm{alg}}$ is generated by prime ideals of $M$ non-split over $\mathbb{Q}$ and we can choose a complete representative set for $\Delta^{\mathrm{alg}}$ consisting of product of prime ideals of $M$ outside $N$, $p$ and $\ell$. In \cite{HidaDwork}, the author chose this set as $\{\mathfrak{r}'^{-1} \mid r'\in\mathscr{R}'\}$, where $\mathscr{R}'$ was made of square-free products of rational primes outside $N$ and $\ell$ that are ramified in $M$ and $\mathfrak{r}'$ is a unique ideal in $M$ such that $\mathfrak{r}'^2=r'$. Thus, $\{\mathfrak{r}'\mid r'\in\mathscr{R}'\}$ was a complete representative set for $2$-torsion elements in $\mathrm{Cl}_M$, the class group of $M$. We alter this choice by noting that the set of split primes in $M$ prime to any given rational integer has positive density in the set of prime ideals of $M$ by the \v Cebotarev density theorem. In other words, in the ray class group of $M$ of an arbitrary conductor, each class contains infinitely many split prime ideals prime to any given rational integer. Thus, we choose a complete representative set for $\Delta^{\mathrm{alg}}$ as $\{\mathfrak{r}^{-1} \mid r\in\mathscr{R}\}$ by finding a split prime ideal $\mathfrak{r}$ representing a class of $\mathfrak{r}'$ as above. Thus, our $\mathscr{R}$ is made of square-free products of rational primes $r$ outside $N\ell p$ that are split in $M$ and $\mathfrak{r}$ is a choice of ideal in $M$ such that $r=\mathfrak{r}\bar{\mathfrak{r}}$.

We also choose a complete set of representatives $\mathscr{Q}$ for $\mathrm{Cl}^-_{\infty}/\Gamma\Delta^{\mathrm{alg}}$ consisting of prime ideals $\mathfrak{q}$ of $M$ that are split over $\mathbb{Q}$ and prime to $p$ and $\ell$. Then $\mathrm{Cl}^-_\infty=\bigsqcup_{\mathfrak{q},\mathfrak{r}}[\mathfrak{q}^{-1}\mathfrak{r}^{-1}]\Gamma$ and we can rewrite (\ref{newgalois}) as
\[\sum_{\mathfrak{q}\in\mathscr{Q}}\sum_{\mathfrak{r}\in\mathscr{R}}\nu(\mathfrak{r}\mathfrak{q})\sum_{\mathfrak{a}\in  y\chi_j^{-1}(\mu_{\ell^s})}\chi_j(\mathfrak{a}^{-1}\langle \mathfrak{q} \rangle)\lambda(\mathfrak{a}^{-1}\mathfrak{r}\mathfrak{q})d^mf(x(\mathfrak{q}^{-1}\mathfrak{r}^{-1}\mathfrak{a}))=0
\]
If we set
\begin{equation}\label{average}
d^mf^\nu =  \sum_{r\in\mathscr{R}}\lambda(\mathfrak{r})\nu(\mathfrak{r})(d^mf)|[r]\;.
\end{equation}
then the above identity becomes 
\begin{equation} \label{trace}
\sum_{\mathfrak{q}\in\mathscr{Q}}\nu(\mathfrak{q})\sum_{\mathfrak{a}\in  y\chi_j^{-1}(\mu_{\ell^s})} \chi_j(\mathfrak{a}^{-1})\lambda^{-1}d^mf^\nu(x(\mathfrak{q}^{-1}\langle\mathfrak{q}\rangle^{-1} \mathfrak{a}))=0\,.
\end{equation}

Fix $\mathfrak{q}\in\mathscr{Q}$. From (\ref{proj}) we know that $\{x(\mathfrak{a})|\mathfrak{a}\in y\chi_j^{-1}(\mu_{\ell^s})\}$ is given by $\{x(\mathfrak{a}_0)\circ\bigl(\begin{smallmatrix} 1 & \frac{u}{\ell^s} \\ 0 & 1 \end{smallmatrix} \bigr)|u\in \mathbb{Z}/\ell^s\mathbb{Z}\}$ where $\mathfrak{a}_0\in y\chi_j^{-1}(\mu_{\ell^s})$ is any fixed member. In particular, we can identify $y\chi_j^{-1}(\mu_{\ell^s})$ with $\mathbb{Z}/\ell^s\mathbb{Z}$ via $\mathfrak{a}\mapsto u\;\mathrm{mod}\;\ell^s$. Note that, after tacitly assuming that $n_j\geq 2s$ (which could be achieved by passing to a suitable subsequence if necessary), this is tantamount to identifying multiplicative group $\Gamma^{\ell^{n_j-s}}/\Gamma^{\ell^{n_j}}$ with additive one $\mathbb{Z}/\ell^s\mathbb{Z}$ by $1+\ell^{n_j-s}u \mapsto u$.
If we choose a primitive $\ell^s$-th root of unity $\zeta=\mathrm{exp}(2\pi i /\ell^s)$ and $\mathfrak{a}_y\in y\chi_j^{-1}(\mu_{\ell^s})$, we can write $\chi_j(\bigl(\begin{smallmatrix} 1 & \frac{u}{\ell^s} \\ 0 & 1 \end{smallmatrix} \bigr)^{-1}\mathfrak{a}_y)=\chi_j(u)=\zeta^{-uv_j}$ for some $v_j\in (\mathbb{Z}/\ell^s\mathbb{Z})^\times$, independent of $y\in \Gamma_j$, that initially depends on $\chi_j$. However, we may assume that $v_j$'s are constant $v$ by resorting to a suitable subsequence of $\{\chi_j\}_{j=1}^\infty$. Then, using the description of the operators $[r]$ and $[q]$ in Section \ref{sec:isog}, the inner sum of (\ref{trace}) is equal to
\[ \sum_{u\,\mathrm{mod}\,\ell^s}\zeta^{uv}\lambda^{-1}(d^mf^\nu)|[q]|\bigl(\begin{smallmatrix} 1 & \frac{u}{\ell^s} \\ 0 & 1 \end{smallmatrix} \bigr)\left(x(\langle\mathfrak{q}\rangle^{-1}\mathfrak{a}_y)\right)\;.\]
Note here that $\bigl(\begin{smallmatrix} 1 & \frac{u}{\ell^s} \\ 0 & 1 \end{smallmatrix} \bigr) \in G(\mathbb{Q}_\ell)\subset G(\mathbb{A}^{(\infty)})$ concentrated at $\ell$, and $\bigl(\begin{smallmatrix} 1 & 0 \\ 0 & q \end{smallmatrix} \bigr) \in G(\mathbb{Q}_q)\subset G(\mathbb{A}^{(\infty)})$ concentrated at $q$, commute.
Setting $g_{\mathfrak{q}}:= \sum_{u\,\mathrm{mod}\,\ell^s}\zeta^{uv}(d^mf^\nu)|[q]|\bigl(\begin{smallmatrix} 1 & \frac{u}{\ell^s} \\ 0 & 1 \end{smallmatrix} \bigr)$, (\ref{trace}) becomes
\[\sum_{\mathfrak{q}\in\mathscr{Q}}\nu(\mathfrak{q})\lambda^{-1}g_{\mathfrak{q}}\left(x(\langle\mathfrak{q}\rangle^{-1}\mathfrak{a})\right)=0\;.\]
Note that $\langle \mathfrak{q} \rangle$, $\mathfrak{q}\in\mathscr{Q}$, are all distinct in $\mathrm{Cl}^-_\infty / \mathrm{Cl}^{\mathrm{alg}}$ by our choice, so the set $\Xi= \Xi_{\underline{n}}$ for $\underline{n}=\{n_j\}_1^\infty$ (defined in Section \ref{sec:zariski}) is Zariski dense in $V^{\mathscr{Q}}_{/\bar{\mathbb{F}}_p}$ by Theorem \ref{density}. Then Corollary \ref{linebundle} furnishes $g_{\mathfrak{q}}=0$.

Note that for a classical modular form $f_1$ of arbitrary level and weight $k$ one has identity
\[(d^mf_1)|\bigl(\begin{smallmatrix} 1 & \frac{u}{\ell^s} \\ 0 & 1 \end{smallmatrix} \bigr)=d^m\left( f_1\| \bigl(\begin{smallmatrix} 1 & \frac{u}{\ell^s} \\ 0 & 1 \end{smallmatrix} \bigr)^{-1}\right)\]
which can be verified by literally the same argument as in proof of Lemma \ref{isogmaneuver} -- the appropriate incarnation of identity (\ref{subtle}) has no scalar factors due to the fact that matrix here is unipotent. Then fixing $\mathfrak{q}_0\in \mathscr{Q}$ and using Lemma \ref{isogmaneuver} we have
{\allowdisplaybreaks \begin{align} \nonumber 
g_{\mathfrak{q}_0} &=\sum_{u\,\mathrm{mod}\,\ell^s}\zeta^{uv}(d^mf^\nu)|[q_0]|\bigl(\begin{smallmatrix} 1 & \frac{u}{\ell^s} \\ 0 & 1 \end{smallmatrix} \bigr) \\
&= \sum_{u\,\mathrm{mod}\,\ell^s}\zeta^{uv} \left( \sum_{r\in\mathscr{R}}\lambda(\mathfrak{r})\nu(\mathfrak{r})(d^mf)|[r] \right)|[q_0]|\bigl(\begin{smallmatrix} 1 & \frac{u}{\ell^s} \\ 0 & 1 \end{smallmatrix} \bigr) \nonumber \\
&= \sum_{r\in\mathscr{R}}\lambda(\mathfrak{r})\nu(\mathfrak{r}) \psi(rq_0)(rq_0)^{k/2+m} d^m\left( \sum_{u\,\mathrm{mod}\,\ell^s}\zeta^{uv} f \| \bigl(\begin{smallmatrix} 1 & 0 \\ 0 & rq_0 \end{smallmatrix} \bigr)^{-1}\bigl(\begin{smallmatrix} 1 & -\frac{u}{\ell^s} \\ 0 & 1 \end{smallmatrix} \bigr) \right) \nonumber 
\end{align} }
Thus, the $\mathbf{q}$-expansion of $g_{\mathfrak{q}_0}$ is given by
\[ g_{\mathfrak{q}_0} (\mathbf{q}) = \sum_{r\in\mathscr{R}} \lambda(\mathfrak{r})\nu(\mathfrak{r}) \psi(rq_0)(rq_0)^{k+m} d^m \left( \sum_{n\geq 0} \left( \sum_{u\,\mathrm{mod}\,\ell^s}\zeta^{u(v-nrq_0)} \right)a(n,f)\mathbf{q}^{nrq_0}\right)\,.\]
and the Fourier coefficient $a(n,g_{\mathfrak{q}_0})$ is given by
\[ a(n,g_{\mathfrak{q}_0})=
\begin{cases}
  \ell^s \sum_{r\in\mathscr{R}} \lambda(\mathfrak{r})\nu(\mathfrak{r}) \psi(rq_0)(rq_0)^{k+m} n^m a(\frac{n}{rq_0},f) & \text{if } n\equiv v\; (\mathrm{mod}\; \ell^s),\\
  0 & \text{otherwise}.
\end{cases} \]
where we accept usual convention that $a(n,f)=0$ when $n$ is not an integer. 

Thus, if we fix a prime $r_0\in\mathscr{R}$, in order to contradict $g_{\mathfrak{q}}=0$ it suffices to find a prime $l$ outside $Np\ell$ and $\mathscr{R}$ such that
$l\equiv v(r_0q_0)^{-1}\; (\mathrm{mod}\,\ell^s)$ but $a(l,f)\not\equiv 0\; (\mathrm{mod}\,p)$. Indeed, then $l r_0q_0\equiv v \; (\mathrm{mod}\; \ell^s)$ and \[a(lr_0q_0,g_{\mathfrak{q}_0})= \ell^s\lambda\nu(\mathfrak{r}_0)\psi(r_0q_0) (r_0q_0)^{k+2m}l^ma(l,f)\not \equiv 0 \; (\mathrm{mod}\; p)\,.\] 

Let $\mathcal{P}$ be a prime ideal of $E$ above $p$. To do the outlined, recall that the celebrated constructions of Shimura ($k=2$), Deligne ($k>2$) and Deligne and Serre ($k=1$) attach to $f$ a Galois representation
\[\rho:\mathrm{Gal}(\bar{\mathbb{Q}}/\mathbb{Q})\to \mathrm{GL}_2(E_{\mathcal{P}})\]
such that for all primes $l\nmid Np$, $\rho$ is unramified at $l$ and for a Frobenius element $\mathrm{Fr}_l$ one has
\[ \mathrm{Tr} \rho(\mathrm{Fr}_l)=a(l,f)\text{ and }\mathrm{det}\rho(F_l)=\psi(l)l^{k-1}\,.\]
Let $\bar{\rho}$ be the reduction of $\rho$ modulo $\mathcal{P}$. Note that the fields $K=\bar{\mathbb{Q}}^{\mathrm{Ker}\bar{\rho}}$ and $\mathbb{Q}(\mu_{\ell^s})$ are linearly disjoint. Indeed, $\ell\nmid Np$ is unramified in the former while totally ramified in the latter. Then one can choose \[\sigma\in \mathrm{Gal}(K(\mu_{\ell^s})/\mathbb{Q})=\mathrm{Gal}(K/\mathbb{Q})\times \mathrm{Gal}(\mathbb{Q}(\mu_{\ell^s})/\mathbb{Q})\] 
such that 
\[\sigma|_K=\mathrm{id}_K \quad \text{and}\quad \sigma|_{\mathbb{Q}(\mu_{\ell^s})}=v(r_0q_0)^{-1}\in (\mathbb{Z}/\ell^s\mathbb{Z})^\times\cong \mathrm{Gal}(\mathbb{Q}(\mu_{\ell^s})/\mathbb{Q})\,.\]
Then the \v Cebotarev density theorem furnishes prime number $l$ such that $\mathrm{Fr}_l|_{K(\mu_{\ell^s})}=\sigma$, and we clearly have $a(l,f)=\mathrm{Tr} \rho(\mathrm{Fr}_l)=\mathrm{Tr}(\mathrm{id}_K) = 2\not\equiv 0 \;(\mathrm{mod}\;p)$ and $l\equiv  v(r_0q_0)^{-1} \; (\mathrm{mod}\,\ell^s)$ as desired.
\end{proof}
Note that we used condition $\ell\nmid N$ only to conclude that $K=\bar{\mathbb{Q}}^{\mathrm{Ker}\bar{\rho}}$ and $\mathbb{Q}(\mu_{\ell^s})$ are linearly disjoint.
(That being said, if we allow possibility of $\ell \mid N$, notation in Section \ref{sec:hewf} would have to be slightly adapted -- in place of $c_{ns}$ one would use its outside-of-$\ell$ part $c_{ns}^{(\ell)}$, however the argument remains valid.) Thus, the Theorem \ref{main} is valid under the following assumption milder than $\ell\nmid N$:
\begin{assumption}\label{assumption} $K=\bar{\mathbb{Q}}^{\mathrm{Ker}\bar{\rho}}$ and $\mathbb{Q}(\mu_{\ell^\infty})$ are linearly disjoint.
\end{assumption}
In particular, if $\ell|N$ but the modulo $\mathcal{P}$ reduction $\bar{\rho}$ happens to be unramified at $\ell$, this assumption is clearly satisfied. Assume for simplicity $\ell\| N$. If component $\pi_{\mathbf{f},\ell}=\pi(\mu_1,\mu_2)$ is a principal series, where $\mu_1$ and $\mu_2$ are characters of $\mathbb{Q}_\ell^\times$ with values in $E^\times$, one of the these characters is unramified and the other has conductor $\ell$. Write $\hat{\mu}_1$ and $\hat{\mu}_2$ for the characters of decomposition group $D_\ell\cong \mathrm{Gal}(\bar{\mathbb{Q}}_\ell/\mathbb{Q}_\ell)\to E_{\mathcal{P}}^\times$ corresponding to $\mu_1$ and $\mu_2$, respectively, by local class field theory. By the well known result of Carayol, we know that the semisimplification $\rho|_{D_\ell}$ is isomorphic to $\hat{\mu}_1\oplus \hat{\mu}_2$. Supposing that the ramified one among $\mu_1$ and $\mu_2$ has unramified reduction modulo $\mathcal{P}$ in order to obey Assumption \ref{assumption}, would bring $p\mid\ell-1$ placing $p$ inside prohibited set $\mathcal{S}(N,\ell)$ and we do not get desired result. 

We hope to treat this question in a future paper and here we limit ourselves with the following
\begin{spcl} \label{spcl}
Suppose that $\ell\| N$ and that $\pi_{\mathbf{f},\ell}$ is a special representation. Then Theorem \ref{main} holds.
\end{spcl}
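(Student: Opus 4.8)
The strategy is to reduce everything to verifying Assumption \ref{assumption} in the present situation. As noted in the paragraph preceding the statement, once we know that $K=\bar{\mathbb{Q}}^{\mathrm{Ker}\bar{\rho}}$ and $\mathbb{Q}(\mu_{\ell^\infty})$ are linearly disjoint over $\mathbb{Q}$, the entire proof of Theorem \ref{main} carries over verbatim, the only cosmetic change being that throughout Section \ref{sec:hewf} one replaces $c_{ns}$ by its prime-to-$\ell$ part $c_{ns}^{(\ell)}$. So the whole content of the proof is the analysis of the ramification of $\bar{\rho}$ at $\ell$.

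First I would pin down the local picture at $\ell$. Since $\ell\| N$ and $\pi_{\mathbf{f},\ell}$ is special, we may write $\pi_{\mathbf{f},\ell}\cong\mathrm{St}_\ell\otimes\mu$ for a character $\mu$ of $\mathbb{Q}_\ell^\times$; as the conductor of $\pi_{\mathbf{f},\ell}$ equals $\ell$, the character $\mu$ is unramified (a special component forces $k\ge 2$, the weight-$1$ case being vacuous here). By Carayol's local-global compatibility for the Galois representation $\rho$ attached to $f$, the restriction $\rho|_{D_\ell}$ is a non-split extension whose semisimplification is $\hat{\mu}\oplus\hat{\mu}\chi_{\mathrm{cyc}}$ with nontrivial monodromy, where $\chi_{\mathrm{cyc}}$ is the $p$-adic cyclotomic character and $\hat{\mu}$ corresponds to $\mu$ under local class field theory. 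Restricting to the inertia subgroup $I_\ell$: since $\ell\neq p$, both $\chi_{\mathrm{cyc}}$ and $\hat{\mu}$ are unramified at $\ell$, so in a suitable basis $\rho|_{I_\ell}=\bigl(\begin{smallmatrix} 1 & u \\ 0 & 1 \end{smallmatrix}\bigr)$ for a continuous homomorphism $u\colon I_\ell\to E_{\mathcal{P}}$ that is nontrivial on an open subgroup. Since $u$ takes values in a pro-$p$ group while wild inertia at $\ell$ is pro-$\ell$, it factors through the tame quotient, so $u(I_\ell)$ is a nontrivial quotient of $\mathbb{Z}_p$. Reducing modulo $\mathcal{P}$, the inertia image $\bar{\rho}(I_\ell)$ consists of unipotent matrices and has order $1$ or $p$.

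Next I would bound $L:=K\cap\mathbb{Q}(\mu_{\ell^\infty})$. Being an intermediate field of the abelian extension $\mathbb{Q}(\mu_{\ell^\infty})/\mathbb{Q}$, which is totally ramified at $\ell$, the extension $L/\mathbb{Q}$ is abelian and totally ramified at $\ell$, whence $[L:\mathbb{Q}]=e_\ell(L/\mathbb{Q})$. On the other hand $L\subset K$, and the inertia subgroup at $\ell$ in $\mathrm{Gal}(K/\mathbb{Q})=\bar{\rho}(\mathrm{Gal}(\bar{\mathbb{Q}}/\mathbb{Q}))$ is $\bar{\rho}(I_\ell)$, of order at most $p$; its image in the quotient $\mathrm{Gal}(L/\mathbb{Q})$ is the inertia of $L$, so $e_\ell(L/\mathbb{Q})$ divides $p$ and $[L:\mathbb{Q}]\in\{1,p\}$. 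But $\mathrm{Gal}(\mathbb{Q}(\mu_{\ell^\infty})/\mathbb{Q})\cong\mathbb{Z}_\ell^\times$ admits a quotient of order $p$ only when $p\mid\ell-1$ or $p=\ell$; since $p\neq\ell$ and $p\notin\mathcal{S}(N,\ell)$, which contains every prime divisor of $\ell-1$, neither occurs, forcing $L=\mathbb{Q}$. As $\mathbb{Q}(\mu_{\ell^\infty})/\mathbb{Q}$ and $K/\mathbb{Q}$ are both Galois, $K\cap\mathbb{Q}(\mu_{\ell^\infty})=\mathbb{Q}$ gives the required linear disjointness, i.e. Assumption \ref{assumption}, and Theorem \ref{main} follows.

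The only delicate point — and the one place where the hypothesis that $\pi_{\mathbf{f},\ell}$ is special enters — is the control of $\bar{\rho}$ at $\ell$: one must observe that, although $\bar{\rho}$ may genuinely ramify at $\ell$ (in contrast to the case $\ell\nmid N$, and unlike a ramified principal series, which would force $p\mid\ell-1$ and thus $p\in\mathcal{S}(N,\ell)$), this ramification is tame of degree dividing $p$. This is exactly the unipotence of $\rho|_{I_\ell}$, the Galois-side incarnation of the nontrivial monodromy of a special local component, and it is what makes the standing hypothesis $p\notin\mathcal{S}(N,\ell)$ already sufficient.
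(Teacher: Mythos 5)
Your argument is correct, but it proves the Proposition by a genuinely different route than the paper. You verify Assumption \ref{assumption} itself: from the Langlands/Carayol description $\rho|_{D_\ell}\cong\hat{\alpha}\otimes\bigl(\begin{smallmatrix} \chi_p^{\mathrm{cy}} & * \\ 0 & 1 \end{smallmatrix}\bigr)$ with $\alpha$ unramified you deduce that $\rho(I_\ell)$ is unipotent, hence $\bar{\rho}(I_\ell)$ has order $1$ or $p$, so any subfield of $K\cap\mathbb{Q}(\mu_{\ell^\infty})$ would be an abelian extension of $\mathbb{Q}$ totally ramified at $\ell$ of degree dividing $p$; since $p\neq\ell$ and $p\nmid\ell-1$ (as $p\notin\mathcal{S}(N,\ell)$), no such nontrivial quotient of $\mathbb{Z}_\ell^\times$ exists, forcing trivial intersection and hence linear disjointness, after which the paper's remark that Theorem \ref{main} holds under Assumption \ref{assumption} finishes the proof. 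The paper instead does not establish (or need) the disjointness: it keeps the Cebotarev step but replaces the choice of $\sigma$ by the projection to $\mathrm{Gal}(K(\mu_{\ell^s})/\mathbb{Q})$ of a lift to $D_\ell$ of the local Artin symbol $(v^{-1}r_0q_0,\mathbb{Q}_\ell^{\mathrm{ab}}/\mathbb{Q}_\ell)$; since $v^{-1}r_0q_0$ is an $\ell$-adic unit and both $\hat{\alpha}$ and $\chi_p^{\mathrm{cy}}$ are unramified at $\ell$, this $\sigma$ has the prescribed action on $\mu_{\ell^s}$ while $\mathrm{Tr}\,\bar{\rho}(\sigma)=2\not\equiv 0$, which is all the proof of Theorem \ref{main} requires. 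The two arguments use the same local input; yours is conceptually cleaner and yields the slightly stronger conclusion that Assumption \ref{assumption} actually holds whenever $\ell\|N$ and $\pi_{\mathbf{f},\ell}$ is special (at the cost of invoking $p\nmid\ell-1$, which is anyway guaranteed), whereas the paper's construction shows that the full strength of the disjointness assumption is not needed so long as one can compute the trace of $\bar{\rho}$ on a single suitable coset, a flexibility that is what one would try to exploit in the remaining ramified cases.
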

\begin{proof}
Let $\pi_{\mathbf{f},\ell}=\mathrm{sp}(\alpha|\cdot|^{1/2},\alpha|\cdot|^{-1/2})$ for an unramified character $\alpha: \mathbb{Q}_\ell^\times \to E^{\times}$.
It is a well known result of Langlands (\cite{La73}) that $\rho|_{D_\ell}$ is isomorphic to 
\[\hat{\alpha}\otimes \bigl(\begin{smallmatrix} \chi_p^{\mathrm{cy}} & * \\ 0 & 1 \end{smallmatrix} \bigr)\]
where $\hat{\alpha}$ denotes the character of $D_\ell\cong \mathrm{Gal}(\bar{\mathbb{Q}}_\ell/\mathbb{Q}_\ell)\to E_{\mathcal{P}}^\times$ corresponding to $\alpha$ by local class field theory and $\chi_p^{\mathrm{cy}}$ is $p$-th cyclotomic character. Let $\mathbb{Q}_\ell^{\mathrm{ab}}$ denote maximal abelian extension of $\mathbb{Q}_\ell$ and $\mu_{(\ell),\infty}$ set of all roots of unity in $\bar{\mathbb{Q}}_\ell$ of order not divisible by $\ell$. Then for Artin local reciprocity map $(\,\cdot\,,\mathbb{Q}_\ell^{\mathrm{ab}}/\mathbb{Q}_\ell): \mathbb{Q}_\ell^\times \to \mathrm{Gal}(\mathbb{Q}_\ell^{\mathrm{ab}}/\mathbb{Q}_\ell)$
and $x=\ell^nu$ ($n\in \mathbb{Z}, u \in \mathbb{Z}_\ell^\times$), we have
\[ (x,\mathbb{Q}_\ell^{\mathrm{ab}}/\mathbb{Q}_\ell)=
\begin{cases}
\mathrm{Fr}_\ell^n & \text{on } \mathbb{Q}_\ell(\mu_{(\ell),\infty}),\\
 u^{-1} & \text{on } \mathbb{Q}_\ell(\mu_{\ell^\infty}).\\
\end{cases}
\]
Going back to proof of Theorem \ref{main}, it suffices to choose $\sigma\in K(\mu_{\ell^s})$ by first lifting $(v^{-1}r_0q_0,\mathbb{Q}_\ell^{\mathrm{ab}}/\mathbb{Q}_\ell)\in \mathrm{Gal}(\mathbb{Q}_\ell^{\mathrm{ab}}/\mathbb{Q}_\ell)$ to $D_\ell\cong \mathrm{Gal}(\bar{\mathbb{Q}}_\ell/\mathbb{Q}_\ell)$ and then projecting to $K(\mu_{\ell^s})$. Again, by virtue of  the \v Cebotarev density theorem we get prime number $l$ such that $\mathrm{Fr}_l|_{K(\mu_{\ell^s})}=\sigma$, and since both $\hat{\alpha}$ and $\chi_p^{\mathrm{cy}}$ are unramified at $\ell$, we conclude $a(l,f)=\mathrm{Tr} \rho(\mathrm{Fr}_l)=\mathrm{Tr}(\sigma) = 2\not\equiv 0 \;(\mathrm{mod}\;p)$ and $l\equiv v(r_0q_0)^{-1}\; (\mathrm{mod}\,\ell^s)$ as desired.
\end{proof}

\end{document}